\RequirePackage[l2tabu, orthodox]{nag}
\documentclass[a4paper,12pt]{amsart}
\usepackage{microtype}
\usepackage[charter]{mathdesign}

\usepackage[T1]{fontenc}
\usepackage{tikz, tikz-cd, stmaryrd, amsmath, amsthm, amssymb,
hyperref, bbm, mathtools, mathrsfs}
\newcommand{\upomega}{\boldsymbol{\omega}}
\newcommand{\upeta}{\boldsymbol{\eta}}
\newcommand{\dd}{\boldsymbol{d}}
\usepackage[shortlabels]{enumitem}
\usetikzlibrary{arrows}
\usetikzlibrary{positioning}
\usepackage[utf8x]{inputenc}
\newcommand{\bb}{\textbf}

\newcommand{\mc}{\mathcal}
\newcommand{\wh}{\widehat}

\newcommand{\mf}{\mathfrak}
\newcommand{\ms}{\mathscr}
\renewcommand{\AA}{\mathbb{A}}

\newcommand{\ZZ}{\mathbb{Z}}

\newcommand{\RR}{\mathbb{R}}
\newcommand{\PP}{\mathbb{P}}

\newcommand{\NN}{\mathbb{N}}
\newcommand{\FF}{\mathbb{F}}

\newcommand{\ddeg}{\textbf{deg}\,}

\DeclareMathOperator{\Ind}{Ind}

\DeclareMathOperator{\tr}{tr}

\DeclareMathOperator{\Span}{Span}

\DeclareMathOperator{\Gl}{Gl}

\DeclareMathOperator{\ord}{ord}
\DeclareMathOperator{\Spec}{Spec}

\DeclareMathOperator{\cha}{char}

\DeclareMathOperator{\Indec}{Indec}

\theoremstyle{plain}
\newtheorem{Theorem}{Theorem}[section]
\newtheorem*{mainthm}{Main Theorem}
\newtheorem{Remark}[Theorem]{Remark}
\newtheorem{Lemma}[Theorem]{Lemma}
\newtheorem{Corollary}[Theorem]{Corollary}

\newtheorem{Proposition}[Theorem]{Proposition}

\newenvironment{manualtheorem}[1]{%
	\manualtheoreminner
}{\endmanualtheoreminner}
\newtheorem{Question}[Theorem]{Question}

\theoremstyle{definition}

\numberwithin{equation}{section}
\hyphenation{Woj-ciech}
\begin{document}

\title[Indecomposable direct summands...]{Indecomposable direct summands\\ of cohomologies of curves}
\author[J. Garnek]{J\k{e}drzej Garnek}
\address{Institute of Mathematics of Polish Academy of Sciences\\ ul. \'{S}niadeckich 8, 00-656~Warszawa}
\address{Max Planck Institute for Mathematics\\ Vivatsgasse 7, 53111 Bonn, Germany}
\email{jgarnek@amu.edu.pl}
\subjclass[2020]{Primary 14G17, Secondary 14H30, 20C20} 
\keywords{de~Rham cohomology, algebraic curves, group actions,
	characteristic~$p$}
\urladdr{http://jgarnek.faculty.wmi.amu.edu.pl/}
\date{}  

\begin{abstract}
	Groups with a non-cyclic Sylow $p$-subgroup have too many representations over a field of characteristic~$p$ to describe them fully. A~natural question arises, whether the world of representations coming from algebraic varieties with a group action is as vast as the realm of all modular representations. In this article, we explore the possible ``building blocks'' (the indecomposable direct summands)
	of cohomologies of smooth projective curves with a group action. We show that usually there are infinitely many such possible summands. To prove this, we study a family of $\ZZ/p \times \ZZ/p$-covers and	describe the cohomologies of the members of this family completely.
\end{abstract}

\maketitle
\bibliographystyle{plain}

\section{Introduction} \label{sec:intro}
One of the fundamental problems of the representation theory is to determine the set $\Indec(\Lambda)$ of isomorphism classes of indecomposable $\Lambda$-modules of finite length for a fixed ring $\Lambda$. Most attention has been given to the case when
$\Lambda = k[G]$ is the group algebra of a finite group $G$ over a field $k$. 
In case when the characteristic of~$k$ is $p > 0$, it is known that $\Indec(k[G])$ is finite if and only if the $p$-Sylow subgroups of $G$ are cyclic (see e.g.~\cite{Higman}, \cite{Borevic_Faddeev}, \cite{Heller_Reiner_Reps_in_integers_I}).
Furthermore, when $p > 2$ and the $p$-Sylow subgroup of $G$ is not cyclic, the algebra $k[G]$ has a wild representation type.
In particular, classifying its modules is as difficult as the theory
of finite dimensional $k[X, Y]$-modules, which has been shown to contain undecidable statements, cf. \cite{Prest}. Hence, it is unrealistic to expect any complete description of $\Indec(k[G])$ in this case.\\

A rich source of modular representations is provided by algebraic geometry.
For instance, if $X$ is a smooth projective variety over a field of positive characteristic equipped with an action of a finite group,
its cohomologies are modular representations. It is natural to wonder whether the world of representations that arise in this way is as vast as the entire realm of modular representations.
In this article, we focus on the case of smooth projective curves and investigate the following question.
\begin{Question} \label{q:main}
	What can be said about the indecomposable direct summands of the cohomologies of curves with an action of a group?
\end{Question}
\noindent To be more precise, we would like to study the sets:
\begin{alignat*}{2}
	\Indec^{\star}(k[G]) &:= \{ M \in \Indec(k[G]) : M \textrm{ is a direct summand of } H^1_{\star}(X)\\
	&\, \textrm{ for a smooth projective curve } X/k \textrm{ with an action of } G  \},
\end{alignat*}
where $\star \in \{ Hdg, dR \}$ (recall that $H^1_{Hdg}(X) := H^0(X, \Omega_X) \oplus H^1(X, \mc O_X)$). For instance, we would like to know whether those sets are infinite.
The motivation for this question comes from the case when $k$ is an algebraically closed field of characteristic~$2$ and $G = \mathbb V_4$ (the Klein group). 
It turns out that in this case $\Indec^{Hdg}(k[G])$ is infinite (cf. \cite{Bleher_Camacho_Holomorphic_differentials}). On the other hand, surprisingly, assuming certain conjecture, $\Indec^{dR}(k[G])$ has only seven elements (see~\cite{Garnek_p_gp_covers_ii} and Remark~\ref{rem:Indec_HKG}). The goal of this article is to prove the following theorem, which explains the situation for other groups.
\begin{mainthm} \label{thm:main_thm}
	Keep the above notation. Let $k$ be an algebraically closed field of characteristic $p > 2$. Suppose that $G$ has a non-cyclic $p$-Sylow subgroup.
	Then the sets $\Indec^{Hdg}(k[G])$ and $\Indec^{dR}(k[G])$ are infinite.
\end{mainthm}
Using the classical methods, it seems quite challenging to establish results about $\Indec^{Hdg}(k[G])$ or $\Indec^{dR}(k[G])$
for a general class of groups. The novelty of our approach lies in 
decomposing the cohomologies of a Galois cover into cohomologies of Harbater--Katz--Gabber curves, using our previous results from~\cite{Garnek_p_gp_covers} and~\cite{Garnek_p_gp_covers_ii}. This allows one for instance to control the equivariant structure of the cohomologies of a cover that was created using patching theory.\\

We give now a sketch of the proof of Main Theorem. Note that the group~$G$ must contain a copy of~$H := \ZZ/p \times \ZZ/p$. In the first step of the proof, we give an explicit family of curves with an action of $H$ with infinitely many
possible indecomposable direct summands in the cohomologies. Namely, we consider the following family of $H$-covers of $\PP^1$ for any $\alpha \in k \setminus \FF_p$ and $m \in \ZZ_+$, $p \nmid m$:
\begin{equation} \label{eqn:family_Xma}
	X_{m, \alpha} : \quad y_0^p - y_0 = x^m, \quad y_1^p - y_1 = \alpha \cdot x^m.
\end{equation}
We describe the cohomologies of $X_{m, \alpha}$ completely. This allows us to prove that 
if $\alpha_1, \alpha_2 \in k \setminus \FF_p$, $m \ge 2$ and
\[
H^0(\Omega_{X_{m, \alpha_1}}) \cong H^0(\Omega_{X_{m, \alpha_2}})
\quad \textrm{ or } \quad H^1_{dR}(X_{m, \alpha_1}) \cong H^1_{dR}(X_{m, \alpha_2})
\]
as $k[H]$-modules, then $\alpha_1 = \alpha_2$ (see Theorem~\ref{thm:iso_of_holo_modules} and~\ref{thm:iso_of_dR_modules} for a more precise version of this statement). Note in particular, that the equivariant structure
of $H^1_{dR}(X_{m, \alpha})$ does not depend only on the ramification data (which was the case for $G = \ZZ/p$
and $G = \ZZ/2 \times \ZZ/2$).

In the second part of the proof of Main Theorem, we would like to
construct a curve~$Z$ with an action of~$G$ such that the cohomologies of~$Z$
contain the cohomologies of~$X_{m, \alpha}$ (for fixed $m$ and $\alpha$) as direct summands.
This construction would be straightforward if we allowed disconnected covers
(namely we could take the disjoint union $\coprod_{G/H} X_{m, \alpha} \to \PP^1$).
Instead, in order to ensure the connectedness, we construct a $G$-cover
$Z \to \PP^1$ that \emph{locally approximates} $\coprod_{G/H} X_{m, \alpha} \to \PP^1$,
using patching theory. The results of papers \cite{Garnek_p_gp_covers} and \cite{Garnek_p_gp_covers_ii} show that (under some additional assumptions) the cohomologies of~$Z$ considered as $k[H]$-modules contain the cohomologies of $X$ as a direct summand. \\

Our results leave several questions open. We mention two of them.
\begin{Question}
	Suppose that $\cha k = 2$ and that the $2$-Sylow subgroup $G$ is neither cyclic,
	nor isomorphic to $\ZZ/2 \times \ZZ/2$. Are the sets $\Indec^{Hdg}(k[G])$ and $\Indec^{dR}(k[G])$ infinite?
\end{Question}
\begin{Question}
	Fix a field $k$ of characteristic $p$ and a group $G$ with a non-cyclic $p$-Sylow subgroup. Are the sets $\{ \dim_k M : M \in \Indec^{\star}(k[G]) \}$ for $\star \in \{ Hdg, dR \}$ infinite?
\end{Question}
It seems likely that both questions can be answered using similar methods, but
considering other families of covers than $X_{m, \alpha}$.
\subsection*{Outline of the paper}
In Section~\ref{sec:notation} we discuss preliminaries on curves and modular representations. Also, we introduce some notation. Section~\ref{sec:family_of_covers}
concerns basic facts on the curves in the family $X_{m, \alpha}$. In Section~\ref{sec:holo_diffs} we study the modular representations associated to
the module of holomorphic differentials on $X_{m, \alpha}$. The next section yields similar results for the de Rham cohomology.
In the last section we prove Main Theorem, using the patching theory and the decomposition of cohomologies from~\cite{Garnek_p_gp_covers} and~\cite{Garnek_p_gp_covers_ii}.
\subsection*{Acknowledgements}
Part of this work was done in February and March 2024 during a stay at the Max-Planck
Institut f\"{u}r Mathematik (Bonn, Germany), whose hospitality and support are gratefully acknowledged. The author wishes to express his thanks to Wojciech Gajda, Piotr Achinger and his officemates from MPIM for many stimulating conversations.
The author was supported by the research grant SONATINA 6 "The de~Rham cohomology of $p$-group covers" UMO-2022/44/C/ST1/00033,
awarded by National Science Center, Poland.

\section{Notation and preliminaries} \label{sec:notation}
For any curve $X$ over a field~$k$, we denote by $g_X$ its arithmetic genus and by $k(X)$ its function field. If $P \in X(k)$, we write $\mc O_{X, P}$ and $\mf m_{X, P}$ for the local ring at~$P$ and its maximal ideal. We denote by $\ord_P : k(X) \to \ZZ \cup \{ \infty\}$ the valuation associated to~$P$. In the sequel, we abbreviate $\Omega_{X/k}$ to $\Omega_X$.
Suppose now that $\pi : X \to Y$ is a $G$-Galois cover of curves and $P \in X(k)$. Then there is a~left action of~$G$ on $X$ and a right action on $k(X)$. We denote
the image of $f \in k(X)$ under the action of $\sigma \in k[G]$ by $\sigma f$ or $\sigma \cdot f$.
Note that if $G$ acts on $X$ from left, this is a right action (but in our applications~)

We write also $e_{X/Y, P}$ for the ramification index at $P$ and $G_P$ for the stabilizer at $P$. Recall that the $i$th lower ramification group at~$P$ is defined as follows:
\[
	G_{P, i} := \{ \sigma \in G_P : \sigma f \equiv f \pmod{\mf m_{X, P}} \quad \forall_{f \in \mc O_{X, P}} \}.
\] 
Let also:
\begin{equation*}
	d_{X/Y, P} := \sum_{i \ge 0} (\# G_{P, i} - 1) \quad \textrm{ and } \quad 
	d_{X/Y, P}' := \sum_{i \ge 1} (\# G_{P, i} - 1).
\end{equation*}
Note that $d_{X/Y, P}$ is the exponent of the different of $k(X)/k(Y)$ at $P$. For a sheaf~$\mc F$ on~$X$ and $Q \in Y(k)$ we often
abbreviate $(\pi_* \mc F)_Q$ to $\mc F_Q$ and $(\pi_* \mc F)_Q \otimes \wh{\mc O}_{Y, Q}$ to $\wh{\mc F}_Q$. If $G$ is a~subgroup
of a group $G_1$ we write $\Ind^{G_1}_G X \to Y$ for the disconnected $G$-cover $\coprod_{G_1/G} X \to Y$.\\

Suppose that $G$ is an arbitrary group with a subgroup~$H$ and that
$M$ is a $k[G]$-module of finite dimension. In this case, we write $M^{\vee}$ for the
$k[G]$-module dual to~$M$ and $M|_H$ for the restriction of~$M$ to~$H$. Also,
we denote the augmentation ideal of~$G$ over~$k$ by~$I_G$. In what follows, we use several times the following standard fact, true under the assumption that $G$ is a $p$-group:
\begin{equation} \label{eqn:G-invariants_p_gp}
	\textrm{ if } M \neq 0, \textrm{ then } M^G \neq 0.
\end{equation}
From now on, we denote by $H$ the group $\ZZ/p \times \ZZ/p$, unless stated otherwise. Let $\sigma$ and $\tau$ be the generators of $H$. Write $\sigma_0 := \sigma - 1 \in k[H]$, $\tau_0 := \tau - 1 \in k[H]$.
Note that $\sigma_0^p = \tau_0^p = 0$ and that $I_H = \sigma_0 k[H] + \tau_0 k[H]$.
For any $k[H]$-module $M$ consider the following filtration:
\begin{align*}
	S_n(M) &:= \{ x \in M : I_H^i \cdot x = 0 \quad \forall_{i > n} \}\\
	&= \{ x \in M : \sigma_0^i \tau_0^j x = 0 \quad \forall_{i+j > n} \}.
\end{align*}
In particular, $S_{-1}(M) = \{ 0 \}$ and
\begin{equation*}
	S_{n+1}(M) := \sigma_0^{-1}(S_n(M)) \cap \tau_0^{-1}(S_n(M)).
\end{equation*}
This filtration induces the function $\ddeg : M \to \ZZ_{\ge -1}$ as follows:
\begin{align*}
	\ddeg v &:= \max \{ i : v \in S_i(M) \}.
\end{align*}
For any integer $n$, denote by $n^{(0)}, n^{(1)}, \ldots$ its digits in the $p$-adic expansion.
Recall that by Lucas's theorem (cf. \cite{Lucas_theorem}) for any $n, i \in \ZZ$:
\begin{equation} \label{eqn:lucas}
	{n \choose i} \equiv \prod_{j \ge 0} {n^{(j)} \choose i^{(j)}} \pmod{p}.
\end{equation}
Let also $s_p(n) := \sum_{j \ge 0} n^{(j)}$ be the sum of $p$-adic digits of~$n$.\\

Assume $p \nmid m$. For any $k$-vector space $V$ with a representation $\rho : \ZZ/m \to \Gl(V)$ we have the following decomposition into eigenspaces:
\begin{equation*}
	V \cong V(1) \oplus V(\zeta_m) \oplus \ldots \oplus V(\zeta_m^{m-1}),
\end{equation*}
where for $0 \le c \le m-1$ we write:
\[
V(\zeta_m^c) := \{ v \in V : \rho(1) v = \zeta_m^c \cdot v \}.
\]
Moreover, if $V$ is equipped with an action of a group $H$ commuting with~$\rho$,
the spaces $V(\zeta_m^c)$ become $k[H]$-modules.

\section{A family of $\ZZ/p \times \ZZ/p$-covers} \label{sec:family_of_covers}
The goal of this section is to describe the basic properties of the curves from the family~\eqref{eqn:family_Xma}. Fix $m \in \NN$, $p \nmid m$ and $\alpha \in k \setminus \FF_p$. Let $X := X_{m, \alpha}$. The group $H := \langle\sigma, \tau \rangle \cong \ZZ/p \times \ZZ/p$ acts on $X$ by the formulas
\begin{align*}
	(\sigma z_0, \sigma z_1, \sigma x) &:= (z_0 + 1, z_1, x),\\
	(\tau z_0, \tau z_1, \tau x) &:= (z_0, z_1+1, x).
\end{align*}
Observe that the curve $X$ admits also an action of $\ZZ/m$ given by
$(z_0, z_1, x) \mapsto (z_0, z_1, \zeta_m \cdot x)$ that commutes with the action of $H$.
We denote the cover $X \to X/H \cong \PP^1$ by~$\pi$. Define
also $P$ as the unique point of $X$ above $\infty$ and let $U := X \setminus \{ P \}$. Note that $P$ is the only ramified point of $\pi : X \to \PP^1$. Therefore $\pi$ is an \emph{Harbater--Katz--Gabber cover} (for $p$-groups this notion is equivalent to being a cover of the projective line ramified only at one point). The ramification groups are as follows:
\begin{equation*}
	H_{P, 0} = \ldots = H_{P, m} = H, \quad H_{P, m+1} = H_{P, m+2} = \ldots = 0
\end{equation*}
(cf. \cite[Section~3]{Wu_Scheidler_Ramification_groups_AS_extensions}). Therefore, the exponent of the different at $P$ equals:
\begin{equation*}
	d_{X/\PP^1, P} = (p^2 - 1) \cdot (m+1).
\end{equation*}
Thus, using the Riemann--Hurwitz formula (cf. \cite[Corollary~IV.2.4]{Hartshorne1977}) we see that 
\begin{equation*}
	g_X = \frac{1}{2}(p^2 - 1) \cdot (m-1).
\end{equation*}
Let $z := z_0 + \beta z_1 \in k(X)$, where $\beta := (-\alpha)^{-1/p}$. Note that:
\begin{equation} \label{eqn:group_action_z}
	\sigma z^n = \sum_{i = 0}^n {n \choose i} z^i \quad \textrm{ and }  \quad 
	\tau z^n = \sum_{i = 0}^n {n \choose i} \beta^{n-i} \cdot z^i.
\end{equation}
In the sequel we will also need valuations at~$P$ of selected functions on~$X$, as given in the table below.
\begin{table}[h]
	\centering
	\begin{tabular}{|c|c|}
		\hline
		\textbf{Function} & \textbf{Valuation at $P$} \\
		\hline
		$z_0$ & $-m \cdot p$ \\
		\hline
		$z_1$ & $-m \cdot p$ \\
		\hline
		$x$ & $-p^2$ \\
		\hline
		$dx$ & $(p^2 - 1)(m+1) - 2 p^2$ \\
		\hline
		$z$ & $-m$ \\
		\hline
	\end{tabular}
\end{table}

We justify now the given valuations. Let $\pi' : X' \to \PP^1$ be the $\ZZ/p$-cover given by the equation $z_0^p - z_0 = x^m$ and let $P' \in X'(k)$ be the unique point
above $\infty \in \PP^1$. Then $\ord_{P'}(z_0) = -m$ (see e.g. \cite[Lemma~4.2]{Garnek_equivariant}) and
\[
	\ord_P(z_0) = e_{X/X', P} \cdot \ord_{P'}(z_0) = -p \cdot m,
\]
By symmetry, $\ord_P(z_1) = -p \cdot m$. Analogously $\ord_P(x) = e_{X/\PP^1, P} \cdot \ord_{\infty}(x) = -p^2$. In order to calculate
$\ord_P(dx)$, we use \cite[Proposition~IV.2.2~(b)]{Hartshorne1977}:
\begin{align*}
	\ord_P(dx) &= e_{X/\PP^1, P} \cdot \ord_{\infty}(dx) + d_{X/\PP^1, P}\\
	&=(p^2 - 1)(m+1) - 2 p^2. 
\end{align*}
Finally:
\begin{align*}
	z^p = z_0^p - \alpha^{-1} \cdot z_1^p = (z_0 + x^m) - \alpha^{-1} \cdot (z_1 + \alpha \cdot x^m)
	= z_0 - \alpha^{-1} \cdot z_1.
\end{align*}
Thus:
\[
\ord_P(z) = \frac 1p \ord_P(z_0 - \alpha^{-1} \cdot z_1) \ge \frac 1p \min\{ \ord_P(z_0), \ord_P(z_1) \} = - m.
\]
Suppose to the contrary that $\ord_P(z) > -m$. Then:
\[
\ord_P(z_0 - \alpha^{-1} \cdot z_1), \quad \ord_P(z_0 + \beta \cdot z_1) > -m \cdot p
\]
and hence also
\[
\ord_P(z_1) = \ord_P((z_0 + \beta \cdot z_1) - (z_0 - \alpha^{-1} \cdot z_1)) > -m \cdot p.
\]
The contradiction shows that $\ord_P(z) = -m$.
\begin{Lemma} \label{lem:rr_spaces}
	For any $\delta \ge 0$, the set
	\[
	\{ z^i \cdot x^c : 0 \le i < p^2, \quad c \ge 0, \quad m \cdot i + p^2 \cdot c < \delta \}.
	\]
	is a basis of the $k$-vector space $H^0(X, \mc O_X((\delta-1) P))$.
\end{Lemma}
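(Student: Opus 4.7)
The plan is to describe the affine coordinate ring $k[U]$, where $U = X \setminus \{P\}$, as a free $k[x]$-module of rank $p^2$ with basis $1, z, \ldots, z^{p^2-1}$, and then apply the standard principle that functions with distinct valuations at a single point are linearly independent.

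First I would verify that each listed function lies in $H^0(X, \mc O_X((\delta-1)P))$. Regularity on $U$ is immediate since $z_0, z_1, x \in k[U]$, hence $z = z_0 + \beta z_1 \in k[U]$, and by the valuation table $\ord_P(z^i x^c) = -mi - p^2 c \ge -(\delta-1)$ under the given bound. Linear independence follows because these valuations are pairwise distinct: if $m i_1 + p^2 c_1 = m i_2 + p^2 c_2$, then $p^2 \mid m(i_1 - i_2)$, and since $p \nmid m$ and $|i_1 - i_2| < p^2$, we must have $i_1 = i_2$ and then $c_1 = c_2$.

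The main work is completeness, for which I would identify $k[U] = k[x][z]$ as free of rank $p^2$ over $k[x]$. From $z = z_0 + \beta z_1$ and $z^p = z_0 - \alpha^{-1} z_1$ together with $\beta + \alpha^{-1} \neq 0$ (which holds because $\alpha \notin \FF_p$ forces $\beta \notin \FF_p$), one solves for $z_0, z_1$ as $k$-linear combinations of $z$ and $z^p$, giving $k[U] = k[x, z_0, z_1] = k[x][z]$. The $H$-orbit of $z$ is $\{z + a + b\beta : a, b \in \FF_p\}$, which has cardinality $p^2$ since $\beta \notin \FF_p$, so the minimal polynomial of $z$ over $k(X)^H = k(x)$ is monic of degree $p^2 = [k(X):k(x)]$. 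Because $z$ is integral over the integrally closed ring $k[x]$, the coefficients of this minimal polynomial lie in $k[x]$, which yields the claimed freeness.

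Combining these, any $f \in k[U]$ can be written uniquely as $f = \sum_{i, c} a_{i, c} x^c z^i$ with $0 \le i < p^2$ and $c \ge 0$. By the distinct-valuation argument of the second paragraph, $\ord_P(f) = \min_{a_{i, c} \neq 0}(-mi - p^2 c)$, and therefore the condition $\ord_P(f) \ge -(\delta-1)$ is equivalent to $mi + p^2 c < \delta$ for every nonzero coefficient. The main obstacle is the identification $k[U] = k[x][z]$ with the stated $k[x]$-basis; once this structural fact is in place, the valuation analysis is formal.
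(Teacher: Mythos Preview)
Your proof is correct and takes a genuinely different, more elementary route than the paper's. The paper argues by first quoting that the Weierstrass semigroup at $P$ equals $\ZZ_{\ge 0}\, p^2 + \ZZ_{\ge 0}\, m$ (citing \cite{Karanikolopoulos_Kontogeorgis_Autos}) and then invoking a general result about Riemann--Roch spaces of HKG covers from \cite{Kontogeorgis_Tsouknidas_Cohomological_HKG}; this is quick but relies on two external references. Your argument is self-contained: the key structural step is the observation that $z_0, z_1$ can be recovered $k$-linearly from $z$ and $z^p$ (using $\beta + \alpha^{-1} = \beta - \beta^p \neq 0$), so that $k[U] = k[x][z]$ is free of rank $p^2$ over $k[x]$, after which the distinct-valuation argument (which both approaches need) finishes the job. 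Your route has the advantage of giving the ring structure of $k[U]$ explicitly and avoiding outside citations; the paper's route situates the lemma within the general theory of Weierstrass semigroups for HKG covers. One small point worth stating explicitly in your write-up is why $k[U]$ coincides with $k[x, z_0, z_1]$ modulo the Artin--Schreier relations: this holds because the cover $X \to \PP^1$ is \'etale over $\AA^1$, so the affine model given by those equations is already smooth (hence normal), and $U = \pi^{-1}(\AA^1)$.
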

\begin{proof}
Consider $\mc H(P)$, the Weierstrass semigroup of $X$, defined as:
\[
\mc H(P) := \{ \delta \ge 0 : \ord_P(f) = - \delta \textrm{ for some } f \in H^0(U, \mc O_X) \}.
\]
Then $\mc H(P) = \ZZ_+ p^2 + \ZZ_+ m$ by \cite[Theorem~13]{Karanikolopoulos_Kontogeorgis_Autos}. Therefore, since $\ord_P(x^2) = -p^2$ and $\ord_P(z) = -m$, we obtain by \cite[Proposition~9]{Kontogeorgis_Tsouknidas_Cohomological_HKG}:
\[
H^0(X, \mc O_X((\delta-1) P)) = \Span_k \{ z^i \cdot x^c : 0 \le i, c, \quad m \cdot i + p^2 \cdot c < \delta \}. \qedhere
\]
\end{proof}
\noindent In the sequel we need also the following result.
\begin{Lemma} \label{lem:trace_of_z}
	We have $\tr_H(z^{p^2-1}) = (\beta^p - \beta)^{p-1}$.
\end{Lemma}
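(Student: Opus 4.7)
The plan is to exploit the $H$-invariance of the trace together with the simple additive action of $H$ on $z = z_0 + \beta z_1$. From the defining formulas for $\sigma, \tau$ one reads off $\sigma z = z + 1$ and $\tau z = z + \beta$, so that
\[
\tr_H(z^{p^2-1}) \;=\; \sum_{a,b \in \FF_p} (z + a + b\beta)^{p^2-1}.
\]
Expanding by the binomial theorem, this is an element of $k[z] \subset k(X)$ (note that $z$ is transcendental over $k$, since $\ord_P(z) = -m \ne 0$). First I would argue that the resulting polynomial $P(z)$ is in fact a \emph{constant}: it is invariant under $z \mapsto z + w$ for every $w$ in the $\FF_p$-subspace $L := \FF_p + \FF_p\beta \subseteq k$, as such a shift merely permutes the summands. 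Since $\alpha \notin \FF_p$ forces $\beta \notin \FF_p$ (else $-\alpha^{-1} = \beta^p = \beta \in \FF_p$), one has $\#L = p^2$, so $P(T) - P(0)$ is a polynomial of degree at most $p^2 - 1$ vanishing on a set of size $p^2$; hence $P(T) \equiv P(0)$.

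It would then remain to compute $P(0) = \sum_{w \in L} w^{p^2-1}$. Using $(a+b\beta)^p = a + b\beta^p$, I would factor $(a+b\beta)^{p^2-1} = (a+b\beta)^{p-1}(a+b\beta^p)^{p-1}$, expand both binomial powers, and sum over $a, b \in \FF_p$. The elementary fact that $\sum_{c \in \FF_p} c^n$ equals $-1$ when $n > 0$ and $(p-1) \mid n$, and $0$ otherwise, kills all contributions except those in which the powers of $a$ and of $b$ are each exactly $p-1$. Applying the congruence $\binom{p-1}{i} \equiv (-1)^i \pmod p$, the surviving terms collapse to a geometric sum which simplifies by additivity of Frobenius:
\[
P(0) \;=\; \sum_{i=0}^{p-1} \beta^i (\beta^p)^{p-1-i} \;=\; \frac{(\beta^p)^p - \beta^p}{\beta^p - \beta} \;=\; \frac{(\beta^p - \beta)^p}{\beta^p - \beta} \;=\; (\beta^p - \beta)^{p-1}.
\]

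The main obstacle is really the constancy reduction in the first step. Without it one would have to verify directly that every coefficient of $z^n$ with $n > 0$ in the binomial expansion vanishes, which amounts to a more tedious term-by-term analysis via Lucas's theorem. The invariance-plus-degree argument is clean precisely because $\deg P \le p^2 - 1$ is strictly less than $\#L = p^2$, which is where the assumption $\beta \notin \FF_p$ (equivalently $\alpha \notin \FF_p$) is used.
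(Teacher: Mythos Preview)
Your proof is correct and takes a genuinely different route from the paper's. Both begin with $\tr_H(z^{p^2-1}) = \sum_{a,b \in \FF_p}(z+a+b\beta)^{p^2-1}$, but the paper then upgrades this to the two-variable identity $\sum_{i,j}(x+i+jy)^{p^2-1} = (y^p-y)^{p-1}$ in $\FF_p[x,y]$: it views the left side as a polynomial in~$y$, pins down its degree and leading coefficient via Lucas's theorem and the identity $\sum_{i \in \FF_p}(x+i)^{p-1} = -1$, and then shows it vanishes at each $y_0 \in \FF_p$ by a reindexing argument. You instead specialize to the actual~$\beta$ from the outset, use shift-invariance of $P$ under the full lattice $L = \FF_p + \FF_p\beta$ (of size $p^2 > \deg P$) to reduce to the single constant $P(0)$, and then exploit the Frobenius factorization $(a+b\beta)^{p^2-1} = (a+b\beta)^{p-1}(a+b\beta^p)^{p-1}$ so that the power-sum computation collapses in a couple of lines. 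The paper's argument delivers a universal identity over $\FF_p[x,y]$; yours is shorter, avoids both Lucas's theorem and the cited lemma of Madden, and makes the hypothesis $\beta \notin \FF_p$ do double duty---once for $\#L = p^2$ in the constancy step, once for the division by $\beta^p - \beta$ at the end.
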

\begin{proof}
	By definition:
	\[
		\tr_H(z^{p^2-1}) = \sum_{i, j \in \FF_p} (z + i + j \cdot \beta)^{p^2-1}.
	\]
	Therefore it suffices to show the following identity in the ring $\FF_p[x, y]$:
	\begin{equation} \label{eqn:polynomial_identity}
		\sum_{i, j \in \FF_p} (x + i + j \cdot y)^{p^2-1} = (y^p - y)^{p-1}.
	\end{equation}
	Denote the left hand side of~\eqref{eqn:polynomial_identity} by $P(x, y)$.
	Write $P(x, y) = \sum_{n \ge 0} a_n(x) \cdot y^n$. Then:
	\[
		a_n(x) = {p^2 - 1 \choose n} \cdot \left(\sum_{i \in \FF_p} (x+i)^{p^2 - 1 - n} \right) \cdot \left( \sum_{j \in \FF_p}  j^n \right).
	\]
	Note that $a_{p^2 - 1}(x) = 0$, since the first sum in the product vanishes.
	If $p^2 - p < n < p^2 - 1$, then $a_n(x) = 0$, as $\sum_{j \in \FF_p} j^n = 0$, see \cite[Lemma~10.3]{IrelandRosen1990}. Moreover:
	\[
		\sum_{j \in \FF_p} j^{p^2 - p} = \sum_{i \in \FF_p} (x+i)^{p-1} = -1
	\]
	(see e.g. \cite[\S 3, Proposition 3]{Madden_Arithmetic_generalized})
	and ${p^2 - 1 \choose p^2 - p} \equiv 1 \pmod p$ by \eqref{eqn:lucas}. This implies that
	$a_{p^2 - p}(x) = 1$.
	To summarize, the degree of $P(x, y)$
	treated as a polynomial in $y$ equals $p^2 - p$ and the leading coefficient equals $1$. 
	Moreover, note that $P(x, y_0) = 0$ for any $y_0 \in \FF_p$. Indeed, if $y_0 \neq 0$, then by substituting $j' := j \cdot y_0$:
	\[
	P(x, y_0) = \sum_{i, j' \in \FF_p} (x + i + j')^{p^2-1}
	= 0,
	\]
	since every summand appears $p$ times. Similar reasoning shows that $P(x, 0) = 0$.
	This ends the proof, as $(y^p - y)^{p-1}$ is the only normed polynomial
	of degree $p^2 - p$ that vanishes at every element of $\FF_p$.
\end{proof}

\section{Holomorphic differentials} \label{sec:holo_diffs}
\noindent In this section we study the modular representations
arising from the family of $H$-covers given by~\eqref{eqn:family_Xma}.
Write
\begin{equation*}
	\dd(c) := p^2 - \left\lceil \frac{p^2 \cdot c + 1}{m} \right\rceil.
\end{equation*}
The following is the main result of this section.
\begin{Theorem} \label{thm:iso_of_holo_modules}
	Suppose that $m \in \ZZ_+$, $p \nmid m$.
	For any $\alpha \in k \setminus \FF_p$ and $1 \le c \le m-1$ the $k[H]$-module $H^0(\Omega_{X_{m, \alpha}})(\zeta_m^c)$ is indecomposable. Moreover, if $\alpha_1, \alpha_2 \in k \setminus \FF_p$, $\dd(c) \neq 0, 1, p^2 - 1$ and
	\begin{equation*}
		H^0(\Omega_{X_{m, \alpha_1}})(\zeta_m^c) \cong H^0(\Omega_{X_{m, \alpha_2}})(\zeta_m^c),
	\end{equation*}
	as $k[H]$-modules then $\alpha_1 = \alpha_2$.
\end{Theorem}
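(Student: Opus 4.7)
My plan is as follows.

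\emph{Step 1 (Basis).} Combining Lemma~\ref{lem:rr_spaces} with the valuations tabulated in Section~\ref{sec:family_of_covers}, a differential $z^i x^{c'} dx$ on $X_{m,\alpha}$ is holomorphic iff $\ord_P(z^i x^{c'} dx) = -mi - p^2 c' + (p^2-1)(m+1) - 2p^2 \ge 0$; and since $\sigma,\tau$ fix $x$ while the $\ZZ/m$-action sends $x \mapsto \zeta_m x$, the $\zeta_m^c$-eigenspace is cut out by $c' \equiv c-1 \pmod m$. For $1 \le c \le m-1$ the only valid value is $c' = c-1$, yielding $0 \le i \le \dd(c) - 1$. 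Hence $V := H^0(\Omega_{X_{m,\alpha}})(\zeta_m^c)$ has $k$-basis $v_i := z^i x^{c-1} dx$ for $0 \le i \le \dd(c)-1$, and \eqref{eqn:group_action_z} gives $\sigma v_i = \sum_{j\le i} \binom{i}{j} v_j$ and $\tau v_i = \sum_{j \le i} \binom{i}{j} \beta^{i-j} v_j$ with $\beta := (-\alpha)^{-1/p}$.

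\emph{Step 2 (Indecomposability).} Since $k[H]$ is Artinian, it suffices to show the socle $V^H$ is one-dimensional. Viewing $V$ as $k[z]_{<\dd(c)}$ with $\sigma z = z+1$ and $\tau z = z+\beta$, Artin--Schreier theory identifies $k[z]^H$ with $k[W]$, where $W := (z^p-z)^p - (\beta^p-\beta)^{p-1}(z^p-z)$ has $z$-degree $p^2$; the hypothesis $\alpha \notin \FF_p$ forces $\beta^p - \beta \ne 0$ so that $W$ is well-defined. As $\dd(c) \le p^2-1$, the only invariant of $z$-degree $<\dd(c)$ is a constant, so $V^H = k \cdot v_0$.

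\emph{Step 3 (Distinguishing $\alpha$).} For $\lambda \in k$ the dimension $N(\lambda) := \dim_k \ker\bigl((\sigma_0 + \lambda \tau_0)|_V\bigr)$ is a $k[H]$-module isomorphism invariant. In the $v_i$-basis the operator $\sigma_0 + \lambda \tau_0$ is strictly upper triangular with $(j,i)$-entry $\binom{i}{j}(1 + \lambda \beta^{i-j})$ and subdiagonal $i(1 + \lambda \beta)$. For $\lambda \ne -\beta^{-1}$ this subdiagonal is generically nonzero, whereas at $\lambda = -\beta^{-1}$ we have $\sigma_0 - \beta^{-1}\tau_0 = -\beta^{-1}(\tau_0 - \beta \sigma_0)$, which lowers degree by at least two on every $v_i$, forcing the kernel to gain an extra dimension. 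A direct rank calculation establishes that $N$ attains a strict maximum at the unique value $\lambda = -\beta^{-1}$; hence $V_{\alpha_1} \cong V_{\alpha_2}$ forces $\beta_1 = \beta_2$, i.e., $\alpha_1 = \alpha_2$.

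The principal obstacle is the rank calculation underlying Step~3: in characteristic $p$ the subdiagonal entries $i(1+\lambda\beta)$ vanish whenever $p \mid i$, so one must track the first non-vanishing sub-diagonal using Lucas's theorem~\eqref{eqn:lucas} to confirm that $N$ is constant off $\lambda = -\beta^{-1}$ and that no spurious jumps occur. The exclusion $\dd(c) = p^2-1$ is precisely the boundary case where $V \cong k[H]/\mathrm{soc}(k[H])$ is $\beta$-independent; for $\dd(c) \in \{0,1\}$ one has $V = 0$ or $V = k \cdot v_0$, which are too small to carry $\beta$-information.
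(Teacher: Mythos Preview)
Your Steps~1 and~2 are correct and match the paper's setup (the paper also deduces indecomposability from $\dim_k V^H = 1$, using the $s_p$-filtration of Proposition~\ref{prop:homo_filtration} rather than the explicit invariant~$W$, but the content is the same).

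Step~3, however, has a genuine gap. You assert that $N(\lambda) := \dim_k \ker(\sigma_0 + \lambda\tau_0)$ attains a strict maximum at $\lambda = -\beta^{-1}$, but you do not carry out the rank computation, and in fact the claim is \emph{false} for $d = p^2 - 2$. To see this, use the exact sequence $0 \to V_{p^2-2,\beta} \to V_{p^2-1,\beta} \to k \to 0$ together with Lemma~\ref{lem:properties_of_vd}~(2), which identifies $V_{p^2-1,\beta} \cong I_H$. For any nonzero $\xi = \sigma_0 + \lambda\tau_0$ one has $\dim_k \ker(\xi|_{I_H}) = p$ (since $k[H]$ is free of rank~$p$ over $k[\xi]/(\xi^p)$ and $\xi^{p-1}k[H] \subset I_H$), so the snake lemma forces $N(\lambda) \le p$. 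On the other hand the generic Jordan type $(p-1)\cdot[p] + [p-2]$ from Remark~\ref{rmk:generic_Jordan_Vd} gives $N(\lambda) \ge p$ by upper semicontinuity. Hence $N(\lambda) \equiv p$ is constant and carries no information about~$\beta$. (Concretely, for $p=3$, $d=7$ one checks that at $\lambda = -\beta^{-1}$ the Jordan type degenerates from $(3,3,1)$ to $(3,2,2)$: the type changes but the number of blocks does not.) Even in the range $d < p^2 - p$ where a jump at $-\beta^{-1}$ does occur, you have not established uniqueness of the jump point, and the ``spurious jumps'' you worry about at the end are exactly what must be ruled out.

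The paper avoids this difficulty by a different route: it introduces the filtration $S_N(V_{d,\beta})$ governed by $s_p(i)$ (Proposition~\ref{prop:homo_filtration}) and then argues in two cases. For $1 < d < p^2 - p$ it compares leading coefficients in $\sigma_0\Phi(\upomega_D)$ and $\tau_0\Phi(\upomega_D)$ for a carefully chosen~$D$, directly reading off $\beta_1 = \beta_2$. For $p^2 - p \le d < p^2 - 1$ it produces a canonical two-dimensional subquotient $\bb N(V_{d,\beta}) \cong V_{2,-\beta}$ (Lemma~\ref{lem:NV_is_V2}), reducing to the first case. If you want to rescue the Jordan-type approach, you would at minimum need to replace $N(\lambda)$ by the full Jordan-type function (e.g.\ track $\dim\ker\xi^2$ as well) and then prove that its non-generic locus is exactly $\{-\beta^{-1}\}$; this is plausible but is a substantial computation that your proposal does not supply.
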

\begin{Remark}
	One checks that $\dd(c) \neq 0, 1, p^2 - 1$ holds if and only if
	\begin{equation*}
		\frac{m-1}{p^2} < c \le m - \frac{2m+1}{p^2}.
	\end{equation*}
	This easily implies that if $m \ge 2$ and $p \ge 3$ then there exists $1 \le c \le m-1$ such that $\dd(c) \neq 0, 1, p^2 - 1$.
\end{Remark}
\noindent We want now to translate Theorem~\ref{thm:iso_of_holo_modules} into
a result about a specific family of modular representations.
We fix $m \ge 1$, $\alpha \in k \setminus \FF_p$ and write
$X := X_{m, \alpha}$.
Note that the map:
\[
H^0(X, \mc O_X(\delta-1) P) \to H^0(X, \Omega_X), \qquad f \mapsto f \, dx
\]
(where $\delta := p^2 m - p^2 - m$) is an isomorphism. Therefore by Lemma~\ref{lem:rr_spaces}
the basis of $H^0(X, \Omega_X)$ is given by the forms:
\[
\omega_{i, c} := z^i \cdot x^{c-1} \, dx,
\]
where $i < p^2$ and $m \cdot i + p^2 \cdot c < m \cdot (p^2 - 1)$. Thus
the basis of $H^0(X, \Omega_X)(\zeta_m^c)$ is given by the forms
$\omega_{i, c}$, where
\[
i \in \mc I(c) := \{ i \in \ZZ : 0 \le i \le p^2 - 1, \quad m \cdot i + p^2 \cdot c < m \cdot (p^2 - 1) \}.
\]
(note in particular, that $\mc I(0) = \varnothing$). It follows easily that
the dimension of $H^0(X, \Omega_X)(\zeta_m^c)$ equals $\dd(c)$.\\

Observe that by~\eqref{eqn:group_action_z} the group action on the forms $\omega_{i, c}$ is given in the following way:
\begin{equation*}
	\sigma \omega_{n, c} = \sum_{i = 0}^n {n \choose i} \omega_{i, c}, \quad
	\tau \omega_{n, c} = \sum_{i = 0}^n {n \choose i} \beta^{n - i} \cdot \omega_{i, c}.
\end{equation*}
This allows us to give an abstract description of the $k[H]$-module $H^0(\Omega_{X_{m, \alpha}})(\zeta_m^c)$. For any positive integer $d \le p^2$ and $\beta \in k$, define 
$V_{d, \beta}$ to be the $d$-dimensional $k$-vector space spanned by the vectors $\upomega_i$, $0 \le i < d$. For simplicity, denote $\upomega_i = 0$ for any $i < 0$.
Consider the group action of $H$ on $V_{d, \beta}$ given as follows:
\begin{align*}
	\sigma \upomega_n = \sum_{i=0}^n {n \choose i} \upomega_i, \qquad \tau \upomega_n = \sum_{i=0}^n {n \choose i} \beta^{n-i} \cdot \upomega_i.
\end{align*}
One can easily prove by induction that for any $j \in \ZZ$:
\[
\sigma^j \cdot \upomega_n = \sum_{i=0}^n {n \choose i} j^{n-i} \cdot \upomega_i, \quad
\tau^j \cdot \upomega_n = \sum_{i=0}^n {n \choose i} (j \cdot \beta)^{n-i} \cdot \upomega_i.
\]
This implies that $\sigma$ and $\tau$ are of order $p$.
Also, one checks that $\sigma$ and~$\tau$ commute. Therefore $V_{d, \beta}$ is a well-defined $k[H]$-module. It is straightforward that the map
\begin{equation} \label{eqn:span_is_isom_to_Vd}
	\Span_k(1, z, \ldots z^{d-1}) \to V_{d, \beta}, \qquad z^i \mapsto \upomega_i
\end{equation}
(where $z$ is defined as in Section~\ref{sec:family_of_covers})
is an isomorphism of $k[H]$-modules. In particular, for any $1 \le c \le m-1$
we have the following isomorphism of $k[H]$-modules:
\begin{equation} \label{eqn:H0Omega_isom_Vd}
	H^0(\Omega_{X_{m, \alpha}})(\zeta_m^c) \to V_{\dd(c), \beta}, \quad \omega_{i, c} \mapsto \upomega_i.
\end{equation}
\begin{Lemma} \label{lem:properties_of_vd}
	Let $\beta \in k \setminus \FF_p$.
	\begin{enumerate}
		\item If $0 \le d_1 \le d_2 \le p^2$, then $V_{d_1, \beta}$ is a submodule of $V_{d_2, \beta}$.
		
		\item We have $V_{p^2, \beta} \cong k[H]$ and $V_{p^2-1, \beta} \cong I_H$. 
	\end{enumerate}
\end{Lemma}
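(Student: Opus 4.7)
The plan is to deduce~(1) immediately from the shape of the action formulas, and then handle~(2) in two steps: first establish $V_{p^2,\beta} \cong k[H]$, and then realize $V_{p^2-1,\beta}$ as the unique codimension-one submodule, namely $I_H$. Part~(1) is bookkeeping: since $\sigma \upomega_n, \tau \upomega_n \in \Span_k(\upomega_0, \ldots, \upomega_n)$, the subspace $\Span_k(\upomega_0, \ldots, \upomega_{d_1-1})$ is $H$-stable inside $V_{d_2, \beta}$, and it is exactly $V_{d_1, \beta}$ by construction.

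For $V_{p^2,\beta} \cong k[H]$, both sides have dimension $p^2 = |H|$, so it suffices to exhibit a cyclic generator. I would transport the question across the isomorphism~\eqref{eqn:span_is_isom_to_Vd} with $d = p^2$, so that the action of $\sigma^i \tau^j$ on $z^n$ becomes translation: $\sigma^i \tau^j \cdot z^{p^2-1} = (z + i + j\beta)^{p^2-1}$. Expanding these in the basis $\{z^k\}_{k=0}^{p^2-1}$, the coefficient of $z^k$ is $\binom{p^2-1}{k}(i+j\beta)^{p^2-1-k}$, so the $p^2 \times p^2$ change-of-basis matrix has determinant equal, up to sign, to $\bigl(\prod_{k=0}^{p^2-1}\binom{p^2-1}{k}\bigr)$ times a Vandermonde determinant in the $p^2$ values $\{i + j\beta : i,j \in \FF_p\}$. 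The Vandermonde factor is nonzero because $\beta \notin \FF_p$ makes these values pairwise distinct; each $\binom{p^2-1}{k}$ with $0 \le k \le p^2-1$ is nonzero modulo~$p$ by Lucas's theorem~\eqref{eqn:lucas}, applied to $p^2 - 1 = (p-1) + (p-1) \cdot p$. Hence $z^{p^2-1}$ is a cyclic generator and the desired isomorphism follows.

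For $V_{p^2-1, \beta} \cong I_H$, part~(1) realizes $V_{p^2-1, \beta}$ as a codimension-one $k[H]$-submodule of $V_{p^2, \beta}$, with the coset of $\upomega_{p^2-1}$ spanning the quotient. The action formulas give $\sigma \upomega_{p^2-1} \equiv \upomega_{p^2-1}$ and $\tau \upomega_{p^2-1} \equiv \upomega_{p^2-1}$ modulo $V_{p^2-1, \beta}$, so the quotient is the trivial $k[H]$-module. Transporting via $V_{p^2, \beta} \cong k[H]$, this becomes a surjection $k[H] \to k$; since $k[H]$ is local with unique maximal ideal $I_H$ (as $H$ is a $p$-group and $\cha k = p$), the kernel is forced to be $I_H$.

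The main obstacle is the Vandermonde computation: the two nonvanishing inputs---distinctness of the translates $\{i + j\beta\}$ and nonvanishing of $\binom{p^2-1}{k} \bmod p$---depend respectively on the hypothesis $\beta \notin \FF_p$ and on Lucas's theorem. Everything else is formal once these are in hand.
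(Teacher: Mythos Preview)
Your proof is correct. Part~(1) is identical to the paper's argument. For part~(2), the paper takes a different route to $V_{p^2,\beta}\cong k[H]$: it invokes Lemma~\ref{lem:trace_of_z}, which computes $\tr_H(z^{p^2-1})=(\beta^p-\beta)^{p-1}\neq 0$, and then cites a criterion of Childs--Orzech stating that the map $k[H]\to\Span_k(1,\ldots,z^{p^2-1})$, $h\mapsto h\cdot z^{p^2-1}$, is an isomorphism precisely when this trace is nonzero. Your Vandermonde computation establishes the same isomorphism directly, without the auxiliary trace lemma or the external reference; the cost is the explicit determinant calculation, but the inputs ($\beta\notin\FF_p$ and Lucas's theorem) are the same ones driving Lemma~\ref{lem:trace_of_z} anyway. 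For $V_{p^2-1,\beta}\cong I_H$, the paper forward-references Lemma~\ref{lem:sigma_0_v_first_terms} to see that $\sigma_0\upomega_{p^2-1},\tau_0\upomega_{p^2-1}\in V_{p^2-1,\beta}$ and then compares dimensions, whereas you identify the quotient as trivial and appeal to the locality of $k[H]$. The two arguments are equivalent in spirit; yours avoids the forward reference.
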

\begin{proof}
	(1) Let $(\upomega_i)_{i < d_1}$ and $(\upomega_i')_{i < d_2}$ be
	the bases of $V_{d_1, \beta}$ and $V_{d_2, \beta}$ respectively. One checks easily that
	the map $\upomega_i \mapsto \upomega_i'$ for $0 \le i < d_1$ induces a monomorphism $V_{d_1, \beta} \hookrightarrow V_{d_2, \beta}$.
	
	(2) Consider the map
	\begin{align*}
		k[H] \to \Span_k(1, z, \ldots, z^{p^2 - 1}), \quad \sum_{h \in H} a_h h \mapsto \sum_{h \in H} a_h h \cdot z^{p^2 - 1}
	\end{align*}
	In order to show that this is an isomorphism, it suffices to show that $\tr_H(z^{p^2 - 1}) \neq 0$, see e.g.~\cite[Theorem~1]{Childs_Orzech_On_modular}. 
	This follows immediately from Lemma~\ref{lem:trace_of_z}.
	Thus by~\eqref{eqn:span_is_isom_to_Vd} $V_{p^2, \beta} \cong \Span_k(1, z, \ldots, z^{p^2 - 1}) \cong k[H]$.
	Note now that $\sigma_0 \upomega_{p^2 - 1}, \tau_0 \upomega_{p^2 - 1} \in V_{p^2-1, \beta}$ by Lemma~\ref{lem:sigma_0_v_first_terms}. Therefore
	$I_H \subset V_{p^2-1, \beta}$. The equality follows by comparing the dimensions. \qedhere
\end{proof}
\begin{Remark} \label{rmk:generic_Jordan_Vd}
	Even though the literature concerning the modular representations of $\ZZ/p \times \ZZ/p$
	is quite extensive, we didn't manage to find the family of representations $V_{d, \beta}$ in the literature. In recent years, special attention has been given to the class of modules of constant Jordan type (see e.g.~\cite{CFP_constant_Jordan}, \cite{Benson_survey}, \cite{Benson_reps_and_vbs}, \cite{Tanimoto_modular_reps}). It is easy to find examples demonstrating that the representations $V_{d, \beta}$ are generally not of constant Jordan type. The generic Jordan type of $V_{d, \beta}$ (in the sense of \cite{FPS_generic_and_maximal}) equals $d^{(1)} \cdot [p] + [d^{(0)}]$.
	We sketch now a proof of this fact. Recall that the generic type of $V_{d, \beta}$ is larger in the sense of the dominance order then the Jordan type of any
	element $\xi \in k[H]$ with $\xi^p = 0$ (cf. \cite{Wheeler}). On the other hand, $d^{(1)} \cdot [p] + [d^{(0)}]$ is the maximal partition in the sense of dominance order.
	Thus it suffices to show that the matrix of~$\sigma$ has the given Jordan type. One shows easily that the fixed vectors of $\sigma$ are given by:
	\[
	\sum_{i = 0}^n {n \choose i} (-1)^{n-i} \cdot \upomega_{p \cdot i + (n-i)}
	\]
	for $n = 0, \ldots, d^{(1)}$ (corresponding to the elements $(z^p - z)^n$). Thus $\dim_k V_{d, \beta}^{\sigma} = d^{(1)} + 1$, which is possible if and only if $\sigma$
	has $d^{(1)}$ blocks of size $p$ and one block of size $d^{(0)}$.
\end{Remark}
\begin{Remark} \label{rem:nice_decomposition_of_X_ma}
	The decomposition of $H^0(X, \Omega_X)$ into indecomposable direct summands takes a particularly nice form, when $m \equiv 1 \pmod{p^2}$. Indeed, if $m = p^2 \cdot m_0 + 1$ then:
	\[
	\dd(c) = p^2 - t \qquad \textrm{ for } (t-1) \cdot m_0 + 1 \le c \le t \cdot m_0.
	\]
	Hence, for $\beta := (-\alpha)^{-1/p}$:
	\[
		H^0(X, \Omega_X) \cong \bigoplus_{d = 1}^{p^2 - 1} V_{d, \beta}^{\oplus m_0}.
	\]
	In particular, all of the modules $V_{1, \beta}, \ldots, V_{p^2 - 1, \beta}$ appear as direct summands of $H^0(X, \Omega_X)$
	for $X := X_{p^2 + 1, \alpha}$, $\alpha := - \beta^{-p}$.
\end{Remark}

The isomorphism~\eqref{eqn:H0Omega_isom_Vd} easily implies that
Theorem~\ref{thm:iso_of_holo_modules} is equivalent to showing indecomposability of $V_{d, \beta}$ (cf. Corollary~\ref{cor:Vd_indecomposable}) along with the following result:
\begin{manualtheorem}{\ref{thm:iso_of_holo_modules}'}
	Suppose that $\beta_1, \beta_2 \in k \setminus \FF_p$ and $1 < d < p^2 - 1$. If $V_{d, \beta_1} \cong V_{d, \beta_2}$
	as $k[H]$-modules, then $\beta_1 = \beta_2$.
\end{manualtheorem}
The proof of Theorem~\ref{thm:iso_of_holo_modules}' will take the rest of this section.
The main ingredient is the following description of the degree function on $V_{d, \beta}$ (as defined in Section~\ref{sec:notation}). 
\begin{Proposition} \label{prop:homo_filtration}
	Suppose that $v = \sum_{i < d} c_i \cdot \upomega_i \in V_{d, \beta}$. Then
	\[
	\ddeg v = \max\{ s_p(i) : c_i \neq 0 \}.
	\]
\end{Proposition}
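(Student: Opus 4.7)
The plan is to realize $V_{p^2,\beta}$ as a tensor product of two simpler $k[H]$-modules, read off the socle filtration from the tensor structure, and then handle the general case via the submodule embedding. Since $V_{d,\beta}$ sits inside $V_{p^2,\beta}$ as a $k[H]$-submodule (Lemma~\ref{lem:properties_of_vd}(1)), one has $S_n(V_{d,\beta}) = S_n(V_{p^2,\beta}) \cap V_{d,\beta}$, so $\ddeg$ is preserved under the inclusion and it suffices to assume $d = p^2$. For $\gamma \in k$ denote by $W_\gamma$ the $k[H]$-module $k[t]/(t^p)$ on which $\sigma, \tau$ act by $\sigma t = t+1$ and $\tau t = t+\gamma$. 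Using the identity $(z+c)^p = z^p + c^p$ in characteristic $p$, the map
\[
W_\beta \otimes_k W_{\beta^p} \longrightarrow V_{p^2,\beta}, \qquad u^a \otimes v^b \longmapsto \upomega_{a+pb} \quad (0 \le a, b < p),
\]
is an isomorphism of $k[H]$-modules, with $u, v$ corresponding to $z$ and $z^p$ respectively.

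Next, in $W_\gamma$ both $\sigma_0$ and $\tau_0$ strictly lower the $t$-degree of every monomial $t^k$ with $1 \le k < p$ (since $\binom{k}{k-1} = k$ is invertible), so by iteration $\ddeg(t^k) = k$ on the nose. Expanding with the Hopf-algebra Leibniz rule
\[
\sigma_0^c \tau_0^d (x \otimes y) \;=\; \sum_{i,j} \binom{c}{i}\binom{d}{j}\, \sigma_0^{c-i}\tau_0^{d-j} x \;\otimes\; \sigma^{c-i}\tau^{d-j}\sigma_0^{i}\tau_0^{j} y,
\]
a non-vanishing summand of $\sigma_0^c\tau_0^d(u^a \otimes v^b)$ forces both $(c-i)+(d-j) \le a$ and $i+j \le b$, hence $c+d \le a+b = s_p(a+pb)$. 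Therefore $\upomega_{a+pb} \in S_{a+b}(V_{p^2,\beta})$, and writing $F_n := \Span_k\{\upomega_i : 0 \le i < p^2,\ s_p(i) \le n\}$ we obtain an inclusion $F_n \subset S_n(V_{p^2,\beta})$.

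To upgrade this inclusion to an equality I match dimensions. By Lemma~\ref{lem:properties_of_vd}(2), $V_{p^2,\beta} \cong k[H] = k[\sigma_0,\tau_0]/(\sigma_0^p,\tau_0^p)$, and a direct check in that presentation shows that $\sigma_0^a\tau_0^b \in S_n(k[H])$ iff $a+b \ge 2(p-1)-n$; the bijection $(a,b) \leftrightarrow (p-1-a,\,p-1-b)$ identifies these with $\{(a',b') \in \{0,\ldots,p-1\}^2 : a'+b' \le n\}$, giving $\dim S_n(V_{p^2,\beta}) = \dim F_n$ and so $F_n = S_n$. Since $F_n$ is spanned by a subset of the basis $\{\upomega_i\}$, an element $v = \sum c_i \upomega_i$ lies in $S_n$ iff $c_i = 0$ whenever $s_p(i) > n$; the smallest such $n$ is exactly $\max\{s_p(i) : c_i \ne 0\}$, which is the claim. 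The main obstacle I foresee is cleanly organizing the Leibniz expansion, in particular tracking the twist $\sigma^{c-i}\tau^{d-j}$ on the second tensor factor; once the degree-lowering of $\sigma_0, \tau_0$ on each $W_\gamma$ is in hand, the rest of the argument is a short dimension count.
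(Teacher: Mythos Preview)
Your argument is correct and takes a genuinely different route from the paper. The paper proceeds by a direct induction on~$N$ to establish $S_N(V_{d,\beta}) = \Span_k(\upomega_i : s_p(i) \le N)$: the inclusion~$\supset$ comes from Lemma~\ref{lem:sigma_0_v_first_terms} (the leading terms of $\sigma_0\upomega_n$ and $\tau_0\upomega_n$), and the reverse inclusion is obtained by reading off the coefficients of $\upomega_{n-1}$ and $\upomega_{n-p}$ in $\sigma_0 v$ and $\tau_0 v$ and solving the resulting $2\times 2$ systems via the hypothesis $\beta\notin\FF_p$ (equation~\eqref{eqn:a=b=0}), with a small case split according to whether $p\mid n$.

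Your approach replaces this coefficient chase by a structural observation: the map $u^a\otimes v^b\mapsto \upomega_{a+pb}$ identifies $V_{p^2,\beta}$ with $W_\beta\otimes_k W_{\beta^p}$, and the Leibniz expansion immediately gives the inclusion $F_n\subset S_n$; equality then follows from a one-line dimension count in $k[H]$ via Lemma~\ref{lem:properties_of_vd}(2). This is cleaner and makes the appearance of $s_p$ transparent (it is just the bidegree in the tensor factorisation). The trade-off is that your argument imports Lemma~\ref{lem:properties_of_vd}(2), and hence ultimately Lemma~\ref{lem:trace_of_z}, so the dependence on $\beta\notin\FF_p$ is hidden in that citation rather than appearing explicitly through~\eqref{eqn:a=b=0}; the paper's proof, by contrast, is self-contained modulo the elementary Lemma~\ref{lem:sigma_0_v_first_terms}. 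A minor point: the remark that $\ddeg(t^k)=k$ in $W_\gamma$ is true but not actually used---all you need is that $\sigma_0$ and $\tau_0$ lower the polynomial degree by at least one, which suffices for the Leibniz bound.
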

\noindent In order to prove Proposition~\ref{prop:homo_filtration} we need the following lemma.
\begin{Lemma} \label{lem:sigma_0_v_first_terms}
	For any $n$ there exist $a_0, a_1, \ldots, b_0, b_1, \ldots \in k$ such that:
	\begin{align*} 
		\sigma_0 \upomega_n &=
		n \cdot \upomega_{n-1} + n^{(1)} \cdot \upomega_{n-p} + \sum_{s_p(i) < s_p(n) - 1} a_i \cdot \upomega_i,\\
		\tau_0 \upomega_n &=
		n \cdot \beta \cdot \upomega_{n-1} + n^{(1)} \cdot \beta^p \cdot \upomega_{n-p} + \sum_{s_p(i) < s_p(n) - 1} b_i \cdot \upomega_i. 
	\end{align*}
\end{Lemma}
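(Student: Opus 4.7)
My plan is to unfold the definitions of $\sigma_0, \tau_0$ and reduce the lemma to a purely combinatorial statement about the $p$-adic digits of binomial coefficients. Writing out the action gives
\[
\sigma_0 \upomega_n = \sum_{i=0}^{n-1} \binom{n}{i} \upomega_i, \qquad \tau_0 \upomega_n = \sum_{i=0}^{n-1} \binom{n}{i} \beta^{n-i} \upomega_i,
\]
and the task reduces to determining, among indices $0 \le i \le n-1$ with $\binom{n}{i} \not\equiv 0 \pmod{p}$, which ones satisfy $s_p(i) \ge s_p(n) - 1$, and computing the corresponding coefficients.

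The combinatorial heart of the argument is Lucas's theorem \eqref{eqn:lucas}: $\binom{n}{i} \not\equiv 0 \pmod p$ precisely when $i^{(j)} \le n^{(j)}$ for every $j \ge 0$, which is exactly the condition that the base-$p$ subtraction $n - i$ proceeds without borrowing. In that case $s_p(n) = s_p(i) + s_p(n-i)$, so the equality $s_p(i) = s_p(n) - 1$ forces $s_p(n - i) = 1$, i.e.\ $n - i = p^k$ for some $k \ge 0$ with $n^{(k)} \ne 0$.

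Finally I would invoke the standing hypothesis $n < d \le p^2$, which forces $n^{(k)} = 0$ for every $k \ge 2$, leaving only $k = 0$ (giving $i = n - 1$ and $\binom{n}{n-1} = n$) and $k = 1$ (giving $i = n - p$ and $\binom{n}{p} \equiv n^{(1)} \pmod p$, again by \eqref{eqn:lucas}). Every other surviving index $i$ in the sum then satisfies $s_p(i) < s_p(n) - 1$, which is exactly the condition imposed on the absorbed indices in the lemma; the $\tau_0$ expansion differs only by the trailing factor $\beta^{n-i}$, producing $\beta$ for $i = n - 1$ and $\beta^p$ for $i = n - p$. I do not foresee a real obstacle, but the one point worth flagging is that the bound $n < p^2$ is essential: without it, indices $i = n - p^k$ for $k \ge 2$ with $n^{(k)} \ne 0$ would contribute further explicit terms at level $s_p(n) - 1$, and the lemma as stated would need modification.
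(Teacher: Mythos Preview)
Your proof is correct and follows essentially the same approach as the paper: both arguments expand $\sigma_0\upomega_n$ and $\tau_0\upomega_n$, invoke Lucas's theorem~\eqref{eqn:lucas} to force $i^{(j)}\le n^{(j)}$ for all~$j$, and then use the digit bound $n<p^2$ to isolate $i=n-1$ and $i=n-p$ as the only surviving indices with $s_p(i)=s_p(n)-1$. Your packaging via the identity $s_p(n)=s_p(i)+s_p(n-i)$ (valid exactly when there is no borrowing) is a clean way to phrase what the paper does by direct case inspection of the two digits, but the content is the same.
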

\begin{proof}
	Suppose that for some $i < n$ we have $s_p(i) \ge s_p(n) - 1$ and ${n \choose i} \not \equiv 0 \pmod p$. If $i^{(0)} > n^{(0)}$
	or $i^{(1)} > n^{(1)}$, then ${n \choose i} \equiv 0 \pmod p$ by~\eqref{eqn:lucas}.
	Therefore either $(i^{(0)}, i^{(1)}) = (n^{(0)} - 1, n^{(1)})$ (i.e. $i = n - 1$),
	or $(i^{(0)}, i^{(1)}) = (n^{(0)} - 1, n^{(1)})$ (i.e. $i = n - p$). Thus:
	\begin{align*}
		\tau_0 \upomega_n &= \sum_{i < n} {n \choose i} \beta^{n-i} \cdot \upomega_i\\
		&= {n \choose n-1} \beta \cdot \upomega_{n-1} + {n \choose n-p} \beta^p \cdot \upomega_{n-p}
		+ \sum_{s_p(i) < s_p(n) - 1} {n \choose i} \beta^{n-i} \cdot \upomega_i\\
		&= n \cdot \beta \cdot \upomega_{n-1} + n^{(1)} \cdot \beta^p \cdot \upomega_{n-p}
		+ \sum_{s_p(i) < s_p(n) - 1} {n \choose i} \beta^{n-i} \cdot \upomega_i
	\end{align*}
	(we used \eqref{eqn:lucas} again to show that ${n \choose n-p} \equiv n^{(1)} \pmod p$). One proves the second formula in the same manner.
\end{proof}
In the sequel we often use the following simple fact that holds for every $\beta \in k \setminus \FF_p$
and $a, b \in k$:
\begin{equation} \label{eqn:a=b=0}
	\textrm{ if } a+b = a \cdot \beta + b \cdot \beta^p = 0, \textrm{ then } a = b = 0.
\end{equation}

\begin{proof}[Proof of Proposition~\ref{prop:homo_filtration}]
	We prove by induction on $N$ that
	\begin{equation} \label{eqn:SN_induction}
		S_N(V_{d, \beta}) = \Span(\upomega_i : s_p(i) \le N).
	\end{equation}
	For $N = -1$ this is clear. Suppose now that~\eqref{eqn:SN_induction} holds for integers less than~$N$. Lemma~\ref{lem:sigma_0_v_first_terms} and induction hypothesis easily imply that
	for any~$i$ with $s_p(i) = N$ we have $\sigma_0 \upomega_i, \tau_0 \upomega_i \in S_{N-1}(V_{d, \beta})$, i.e. $\upomega_i \in S_N(V_{d, \beta})$.
	Write $v = \sum_{i < d} c_i \upomega_i \in S_N(V_{d, \beta})$. Let $M := \max \{ s_p(i) : c_i \neq 0 \}$.
	Suppose to the contrary that $M > N$ and fix any $n$ with $s_p(n) = M$. Then by Lemma~\ref{lem:sigma_0_v_first_terms}:
	\begin{align*}
		\tau_0 v = \sum_{s_p(i) = N} c_i \cdot (i \cdot \beta \cdot \upomega_{i-1} + i^{(1)} \cdot \beta^p \cdot \upomega_{i-p}) + \sum_{s_p(i) < M-1} b_i \cdot \upomega_i.
	\end{align*}
	Thus the coefficient of $\upomega_{n-1}$ in $\tau_0 v$ is equal to $\beta \cdot c_n \cdot n + \beta^p \cdot c_{n+p - 1} \cdot (n+p - 1)^{(1)}$. On the other hand, it
	equals $0$ by induction hypothesis. Therefore:
	\begin{equation} \label{eqn:coeff_upomega_tau0}
		\beta \cdot c_n \cdot n + \beta^p \cdot c_{n+p - 1} \cdot (n+p - 1)^{(1)} = 0.
	\end{equation}
	Similarly, by considering the coefficient of $\upomega_{n-1}$ in $\sigma_0 v$:
	\begin{equation} \label{eqn:coeff_upomega_sigma0}
		c_n \cdot n + c_{n+p - 1} \cdot (n+p - 1)^{(1)} = 0.
	\end{equation}
	The equations~\eqref{eqn:coeff_upomega_tau0} and~\eqref{eqn:coeff_upomega_sigma0} imply by~\eqref{eqn:a=b=0} that if $p \nmid n$ then $c_n = 0$.
	If $p | n$, then by considering the coefficients of $\upomega_{n - p}$ in
	$\sigma_0 v$ and $\tau_0 v$ we easily obtain:
	\begin{align*}
		c_n \cdot n^{(1)} + c_{n+p - 1} &= 0,\\
		\beta^p \cdot c_n \cdot n^{(1)} + \beta \cdot c_{n+p - 1} &= 0,
	\end{align*}
	which again implies that $c_n = 0$ by~\eqref{eqn:a=b=0}. Thus $c_n = 0$ for all $n$ with $s_p(n) = M$. This yields a contradiction and ends the proof.
\end{proof}
\begin{Corollary} \label{cor:Vd_indecomposable}
	The module $V_{d, \beta}$ is indecomposable.
\end{Corollary}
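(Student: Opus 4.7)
The plan is to deduce indecomposability from the fact that $V_{d,\beta}$ has a one-dimensional space of $H$-invariants, combined with the standard observation that nonzero modules over a $p$-group algebra always have nonzero invariants.

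First I would identify $V_{d,\beta}^H$. Since $I_H$ is generated by $\sigma_0$ and $\tau_0$, we have $V_{d,\beta}^H = \{v : \sigma_0 v = \tau_0 v = 0\} = S_0(V_{d,\beta})$. By Proposition~\ref{prop:homo_filtration}, $S_0(V_{d,\beta}) = \Span_k(\upomega_i : s_p(i) \le 0)$, and since $s_p(i) = 0$ forces $i = 0$ (as $i \ge 0$), this space equals $k \cdot \upomega_0$, which is one-dimensional.

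Now suppose for contradiction that $V_{d,\beta} = M_1 \oplus M_2$ with both $M_1, M_2$ nonzero $k[H]$-submodules. Taking $H$-invariants preserves direct sums, so
\[
V_{d,\beta}^H = M_1^H \oplus M_2^H.
\]
Applying \eqref{eqn:G-invariants_p_gp} to each summand (which is permissible since $H$ is a $p$-group and both $M_i$ are nonzero), we get $M_1^H \neq 0$ and $M_2^H \neq 0$, hence $\dim_k V_{d,\beta}^H \ge 2$, contradicting the computation above.

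There is no real obstacle here: all the work has been done in Proposition~\ref{prop:homo_filtration}. The only thing to check is the identification $V_{d,\beta}^H = S_0(V_{d,\beta})$, which is immediate from the definition of $S_0$ and the fact that $I_H = \sigma_0 k[H] + \tau_0 k[H]$.
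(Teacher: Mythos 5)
Your proof is correct and is essentially the paper's own argument: the paper likewise deduces indecomposability from $\dim_k V_{d,\beta}^H = \dim_k S_0(V_{d,\beta}) = 1$ (via Proposition~\ref{prop:homo_filtration}) together with the fact~\eqref{eqn:G-invariants_p_gp} that nonzero modules over a $p$-group have nonzero invariants. You have merely spelled out the details the paper leaves implicit.
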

\begin{proof}
	This follows immediately from~\eqref{eqn:G-invariants_p_gp} by noting that $\dim_k V_{d, \beta}^H = \dim_k S_0(V_{d, \beta}) = 1$ by Proposition~\ref{prop:homo_filtration}.
\end{proof}
Suppose that $\Phi : V_{d, \beta_1} \to V_{d, \beta_2}$ is an isomorphism of $k[H]$-modules.
For clarity, we denote the basis of $V_{d, \beta_1}$ by $\upomega_n$ and the basis of $V_{d, \beta_2}$ by $\upomega_n'$. We divide the proof of Theorem~\ref{thm:iso_of_holo_modules}' into two separate cases:
\begin{itemize}
	\item \bb{Case I:} $1 < d < p^2 - p$.
	\item \bb{Case II:} $p^2 - p \le d < p^2 - 1$.
\end{itemize}
\subsection*{Proof of Theorem~\ref{thm:iso_of_holo_modules}' -- Case I} \label{subsec:pf_first_case}
Assume that $d < p^2 - p$. Then $D := d^{(1)} + 1$ is less then~$p$.
Let:
\[
\Phi(\upomega_{D-1}) = \sum_{i < d} a_i \cdot \upomega_i', \quad \Phi(\upomega_D) = \sum_{i < d} b_i \cdot \upomega_i'
\]
for some $a_i, b_i \in k$. Note that by Proposition~\ref{prop:homo_filtration} we have
$\ddeg(\Phi(\upomega_{D-1})) = \ddeg(\upomega_{D-1}) = D-1$ and thus
$a_i = 0$ for $s_p(i) > D - 1$. Similarly, $b_i = 0$ for $s_p(i) > D$ and
there exists~$0 \le i < d$ with $b_i \neq 0$ and $s_p(i) = D$.
We compute now $\tau_0 \Phi(\upomega_D)$ in two different ways.
On one hand, using Lemma~\ref{lem:sigma_0_v_first_terms}, for some $v_1 \in S_{D-2}(V_{d, \beta_1})$, $v_2 \in S_{D-2}(V_{d, \beta_2})$:
\begin{align} 
	\tau_0 \Phi(\upomega_D) &= \Phi(\tau_0 \upomega_D) = \Phi(D \cdot \beta_1 \cdot \upomega_{D-1} + v_1) \nonumber\\
	&= D \cdot \beta_1 \cdot \sum_{s_p(i) = D-1} a_i \upomega_i' + v_2. \label{eqn:Phi_tau_omega}
\end{align} 
On the other hand, using Lemma~\ref{lem:sigma_0_v_first_terms} one more time:
\begin{align} 
	\tau_0 \Phi(\upomega_D) &= \sum_{i < d} b_i \cdot \tau_0 \upomega_i' \nonumber\\
	&= \sum_{s_p(i) = D} b_i \cdot (i \cdot \beta_2 \cdot \upomega_{i-1}' + i^{(1)} \cdot \beta_2^p \cdot \upomega_{i-p}') + v_3, \label{eqn:tau_Phi_omega}
\end{align}
where $v_3 \in S_{D-2}(V_{d, \beta_2})$. Let $n := \max \{ i : b_i \neq 0, s_p(i) = D \}$. Then $p \nmid n$, since $p \cdot D > d$. Comparing the coefficients of $\upomega_{n-1}'$
in~\eqref{eqn:Phi_tau_omega} and in~\eqref{eqn:tau_Phi_omega} we obtain:
\begin{equation} \label{eqn:beta_a=beta_b}
	\beta_1 \cdot D \cdot a_{n-1} = \beta_2 \cdot n \cdot b_n.
\end{equation}
Analogously, by comparing the coefficients of $\upomega_{n-1}'$ in $\Phi(\sigma_0 \upomega_D) = \sigma_0 \Phi(\upomega_D)$ we obtain:
\begin{equation} \label{eqn:a=b}
	D \cdot a_{n-1} = n \cdot b_n.
\end{equation}
Finally, the equations~\eqref{eqn:beta_a=beta_b} and~\eqref{eqn:a=b} yield:
\begin{equation*}
	\beta_1 = \beta_1 \cdot \frac{D \cdot a_{n-1}}{n \cdot b_n} = \beta_2,
\end{equation*}
which ends the proof in this case.
\subsection*{Proof of Theorem~\ref{thm:iso_of_holo_modules}' -- Case II} \label{subsec:pf_2nd_case}

We assume now that $p^2 - p < d \le p^2 - 2$. Note that in this case, by Proposition~\ref{prop:homo_filtration}:
\[
	V_{d, \beta} = S_{2p-3}(V_{d, \beta}) = S_{2p-4}(V_{d, \beta}) \oplus \Span_k(\upomega_{p^2 - p - 1}).
\]
Let $\bb N(V_{d, \beta}) := \Span_k(v, w)$, where $v := \sigma_0^{p-2} \tau_0^{p-2} \upomega_{p^2 - p - 1}$ and $w := \sigma_0 v$.
\begin{Lemma} \label{lem:NV_is_V2}
	$\bb N(V_{d, \beta}) \cong V_{2, - \beta}$.
\end{Lemma}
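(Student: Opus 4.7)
The plan is to verify the three structural relations that characterize $V_{2,-\beta}$ on the candidate basis $(v,w)$: namely $\sigma_0 v = w$ (by definition), $\tau_0 v = -\beta w$, and $\sigma_0 w = \tau_0 w = 0$, along with $w \neq 0$. Once these are in place, the assignment $v \mapsto \upomega_1'$, $w \mapsto \upomega_0'$ is plainly a $k[H]$-isomorphism $\bb N(V_{d,\beta}) \to V_{2,-\beta}$. The filtration part comes almost for free from Proposition~\ref{prop:homo_filtration}: since $\ddeg \upomega_{p^2-p-1} = s_p(p^2-p-1) = 2p-3$, one has $v \in S_1(V_{d,\beta})$ and hence $w \in S_0(V_{d,\beta}) = k\upomega_0$, which immediately forces $\sigma_0 w = \tau_0 w = 0$. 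What remains is a scalar computation: both $w$ and $\tau_0 v$ are multiples of $\upomega_0$, so one has to show $w \neq 0$ and identify the ratio $\tau_0 v / w = -\beta$.

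For this I would pass to the polynomial realization $\upomega_i \leftrightarrow z^i$, in which $\sigma f(z) = f(z+1)$ and $\tau f(z) = f(z+\beta)$, and use the identities $\sigma_0^{p-1} = \tr_\sigma = \sum_{i \in \FF_p} \sigma^i$ and $\tau_0^{p-2} = -\sum_{j=0}^{p-2}(j+1)\tau^j$ (the latter obtained from $\binom{p-2}{j} \equiv (-1)^j(j+1) \pmod p$ via~\eqref{eqn:lucas}, together with $(-1)^{p-2} = -1$). These give
\[
\sigma_0 v \;=\; \sigma_0^{p-1}\tau_0^{p-2} z^{p^2-p-1} \;=\; -\sum_{j=0}^{p-2}(j+1)\sum_{i \in \FF_p}(z+i+j\beta)^{p^2-p-1}.
\]
Applying the standard formula $\sum_{i \in \FF_p}(Y+i)^n = -\sum_{m \geq 1}\binom{n}{(p-1)m}Y^{n-(p-1)m}$ with $n = p^2-p-1$, together with Lucas's theorem $\binom{p^2-p-1}{(p-1)m} \equiv m \pmod p$ for $1 \leq m \leq p-1$, and extracting the coefficient of $\upomega_0$ (the constant term in $z$), the computation reduces to the Fermat-type sum $\sum_{j=1}^{p-2}(j+1)j^{k_m}$ where $k_m := p^2-(m+1)p+(m-1)$. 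Since $k_m+1 \equiv 0 \pmod{p-1}$ while $k_m \equiv p-2 \pmod{p-1}$ and $k_m$ is odd, this inner sum evaluates to $-1$.

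Substituting back and reindexing $m \leftrightarrow p-m$, the coefficient of $\upomega_0$ in $\sigma_0 v$ becomes $\beta^{-1}\sum_{\ell=1}^{p-1}\ell \beta^{(p-1)\ell}$. The identity $\sum_{\ell=0}^{p-1}x^\ell = (x-1)^{p-1}$ (valid in characteristic $p$ because $x^p-1 = (x-1)^p$) and its formal derivative $\sum_{\ell=1}^{p-1}\ell x^\ell = -x(x-1)^{p-2}$ then give
\[
\sigma_0 v = -(\beta^p-\beta)^{p-2}\upomega_0,
\]
which is nonzero since $\beta \notin \FF_p$. The analogous computation of $\tau_0 v = \sigma_0^{p-2}\tau_0^{p-1} z^{p^2-p-1}$, obtained by interchanging the roles of $\sigma$ and $\tau$ in the expansion, produces $\tau_0 v = -\sum_{m=1}^{p-1}m\,\beta^{(p-1)m}\,\upomega_0 = -\beta \cdot \sigma_0 v$, so $\tau_0 v = -\beta w$ as required. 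The main obstacle is this explicit coefficient bookkeeping --- in particular the Lucas evaluation of $\binom{p^2-p-1}{(p-1)m}$ and the Fermat-type inner sum --- but every other step is formal.
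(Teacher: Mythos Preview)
Your proof is correct, but it takes a genuinely different route from the paper's argument. You compute the coefficient of $\upomega_0$ in $w=\sigma_0 v$ and in $\tau_0 v$ explicitly, via the polynomial realization and standard power-sum identities over $\FF_p$; this yields the closed formula $w=-(\beta^p-\beta)^{p-2}\,\upomega_0$ and the ratio $\tau_0 v/w=-\beta$ by direct bookkeeping. The paper avoids all of this computation: it passes to $V_{p^2-1,\beta}$, observes from Lemma~\ref{lem:sigma_0_v_first_terms} that $(\tau_0-\beta\sigma_0)\upomega_{p^2-1}=-(\beta^p-\beta)\upomega_{p^2-p-1}$ modulo $S_{2p-4}$, and then notes that
\[
(\beta^p-\beta)(\tau_0 v+\beta w)=\sigma_0^{p-2}\tau_0^{p-2}(\tau_0+\beta\sigma_0)(\tau_0-\beta\sigma_0)\upomega_{p^2-1}
=\sigma_0^{p-2}\tau_0^{p-2}(\tau_0^2-\beta^2\sigma_0^2)\upomega_{p^2-1}=0
\]
since $\sigma_0^p=\tau_0^p=0$; nonvanishing of $w$ is deduced from the filtration alone. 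The paper's approach is slicker and coordinate-free, while yours is more elementary and has the small bonus of producing an explicit value for $w$ (which is not needed here, but pleasant). One minor remark: the observation that ``$k_m$ is odd'' is superfluous for your argument---the relevant facts are $k_m>0$ and $k_m\not\equiv 0\pmod{p-1}$, both of which you already established.
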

\begin{proof}
	It suffices to prove that $v, w \neq 0$ and
	\[
		\sigma_0 v = w, \quad \tau_0 v = - \beta \cdot w.
	\]
	The first equality follows straight from the definition of $v$ and $w$.
	In order to prove the second equality, we work in the $V_{p^2-1, \beta}$.
	In this module, by Lemma~\ref{lem:sigma_0_v_first_terms}:
	\begin{align*}
		\sigma_0 \upomega_{p^2 - 1} &= - \upomega_{p^2 - 2} - \upomega_{p^2 - p - 1} + e_1,\\
		\tau_0 \upomega_{p^2 - 1} &= - \beta \cdot \upomega_{p^2 - 2} - \beta^p \cdot \upomega_{p^2 - p - 1} + e_2,
	\end{align*}
	where $\ddeg e_1, \ddeg e_2 \le 2p-4$. Therefore:
	\[
	- (\beta^p - \beta) \cdot \upomega_{p^2 - p - 1} = (\tau_0 - \beta \cdot \sigma_0) \cdot \upomega_{p^2 - 1} + e_3,
	\]
	where $\ddeg e_3 \le 2p-4$. This yields:
	\begin{align*}
		(\beta^p - \beta) \cdot (\tau_0 v + \beta w) &= \sigma_0^{p-2} \tau_0^{p-2} \cdot (\tau_0 + \beta \sigma_0) \cdot (\tau_0 - \beta \sigma_0) \cdot \upomega_{p^2 - 1}\\
		&= \sigma_0^{p-2} \tau_0^{p-2} \cdot (\tau_0^2 - \beta^2 \sigma_0^2) \cdot \upomega_{p^2 - 1} = 0,
	\end{align*}
	since $\sigma_0^p = \tau_0^p = 0$ and $\sigma_0^{p-1} \tau_0^{p-2} e_3 = \sigma_0^{p-2} \tau_0^{p-1} e_3 = 0$. Therefore, since $\beta \not \in \FF_p$, we obtain $\tau_0 v = - \beta w$. Finally, note that $w \neq 0$, since otherwise
	$\sigma_0 v = \tau_0 v = 0$ and $\ddeg \upomega_{p^2 - p - 1}$ would be equal
	to $2p-4$.
\end{proof}
We will show that $\Phi$ induces an isomorphism between $\bb N(V_{d, \beta_1})$ and
$\bb N(V_{d, \beta_2})$. Let $\bb N(V_{d, \beta_1}) = \Span_k(v, w)$ and $\bb N(V_{d, \beta_2}) = \Span_k(v', w')$,
where $v$, $w$, $v'$, $w'$ are defined as above. Since $S_{2p-3}(V_{d, \beta})/S_{2p-4}(V_{d, \beta}) = \Span_k(\upomega_{p^2 - p - 1})$,
we have 
\[
	\Phi(\upomega_{p^2 - p - 1}) = c_1 \cdot \upomega_{p^2 - p - 1}' + e',
\]
where $c_1 \in k^{\times}$ and $e' \in S_{2p-4}(V_{d, \beta_2})$.	Note that $\sigma_0^{p-1} \tau_0^{p-2} e' = \sigma_0^{p-2} \tau_0^{p-1} e' = 0$. Hence $\sigma_0^{p-2} \tau_0^{p-2} e' \in V_{d, \beta_2}^H = \Span_k(w')$ and $\sigma_0^{p-2} \tau_0^{p-2} e' = c_2 \cdot w'$ for some $c_2 \in k$. Therefore:
\[
	\Phi(v) = \sigma_0^{p-2} \tau_0^{p-2} (c_1 \cdot \upomega_{p^2 - p - 1}' + e')
	= c_1 \cdot v' + c_2 \cdot w'.
\]
Similarly, $\Phi(w) = c_1 \cdot w'$. Hence:
\begin{align*}
	\Phi(\bb N(V_{d, \beta_1})) = \Span_k(c_1 \cdot v' + c_2 \cdot w', c_1 \cdot w')
	= \Span_k(v', w') = \bb N(V_{d, \beta_2}).
\end{align*}
By Lemma~\ref{lem:NV_is_V2} we obtain $V_{2, - \beta_1} \cong V_{2, - \beta_2}$. Therefore
(by Case I of the proof) $\beta_1 = \beta_2$. This ends the proof.

\section{The de Rham cohomology}
In this section we prove the following analogue of Theorem~\ref{thm:iso_of_holo_modules}
for the de Rham cohomology.
\begin{Theorem} \label{thm:iso_of_dR_modules}
	Suppose that $m \in \ZZ_+$, $p \nmid m$. For any $\alpha \in k \setminus \FF_p$ and $1 \le c \le m-1$ the $k[H]$-module $H^1_{dR}(X)(\zeta_m^c)$ is indecomposable of dimension $p^2 - 1$. Moreover, if $\alpha_1, \alpha_2 \in k \setminus \FF_p$, $\lfloor \frac{m}{p} \rfloor \le c_1, c_2 \le m - \lfloor \frac{m}{p} \rfloor$ and
	\begin{equation*}
		H^1_{dR}(X_{m, \alpha_1})(\zeta_m^{-c_1}) \cong H^1_{dR}(X_{m, \alpha_2})(\zeta_m^{-c_2}),
	\end{equation*}
	as $k[H]$-modules, then $\lfloor \frac{\dd(c_1)}{p} \rfloor = \lfloor \frac{\dd(c_2)}{p} \rfloor$
	and $\alpha_1 = \alpha_2$.
\end{Theorem}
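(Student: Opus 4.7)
I would mirror the proof of Theorem~\ref{thm:iso_of_holo_modules} closely. First, the dimension count comes from the Hodge--de Rham short exact sequence
\[
	0 \to H^0(\Omega_X)(\zeta_m^{-c}) \to H^1_{dR}(X)(\zeta_m^{-c}) \to H^1(X, \mc O_X)(\zeta_m^{-c}) \to 0
\]
combined with the $\ZZ/m$-equivariant Serre duality $H^1(X, \mc O_X)(\zeta_m^{-c}) \cong H^0(\Omega_X)(\zeta_m^c)^{\vee}$. The dimensions then equal $\dd(m-c)$ and $\dd(c)$ by Section~\ref{sec:holo_diffs}, and a short calculation using $p \nmid m$ (which forces $m \nmid p^2 c$ for $1 \le c \le m-1$) shows that $\dd(c) + \dd(m-c) = p^2 - 1$.

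\textbf{Indecomposability.} By~\eqref{eqn:G-invariants_p_gp} it suffices to show $\dim_k H^1_{dR}(X)(\zeta_m^c)^H = 1$. Taking $H$-invariants of the Hodge--de Rham sequence and using Corollary~\ref{cor:Vd_indecomposable} for the first term, the problem reduces to showing that the connecting map $H^1(X, \mc O_X)(\zeta_m^c)^H \to H^1(H, H^0(\Omega_X)(\zeta_m^c))$ is injective. For this I would work with an explicit presentation of $H^1_{dR}(X)(\zeta_m^c)$ as the space of second-kind differentials on $U = X \setminus \{P\}$ modulo $d\mc O(U)$, spanned by forms $z^i x^{c'-1}\,dx$ (using Lemma~\ref{lem:rr_spaces} and the valuation table from Section~\ref{sec:family_of_covers}), with $H$-action induced by~\eqref{eqn:group_action_z} up to exact corrections.

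\textbf{Distinguishing isomorphism classes, and main obstacle.} To establish the non-isomorphism statement, I would define an analog of the filtration $S_n$ and the degree function $\ddeg$ from Section~\ref{sec:notation} on $H^1_{dR}(X)(\zeta_m^{-c})$. Following Remark~\ref{rmk:generic_Jordan_Vd}, the invariant $\lfloor \dd(c)/p \rfloor$ should appear as essentially $\dim_k \ker(\sigma - 1)$ on $H^1_{dR}(X)(\zeta_m^{-c})$ (up to a small correction), i.e., the number of size-$p$ Jordan blocks in the generic Jordan type --- manifestly an isomorphism invariant. The parameter $\beta = (-\alpha)^{-1/p}$ would then be extracted by applying $\sigma_0$ and $\tau_0$ to a carefully chosen ``top'' vector and comparing coefficients, exactly as in Case~I of the proof of Theorem~\ref{thm:iso_of_holo_modules}'. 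The main obstacle is the explicit $H$-equivariant description of $H^1_{dR}(X)(\zeta_m^c)$: unlike holomorphic forms, de~Rham classes are defined only modulo $d\mc O(U)$, introducing a gauge-like ambiguity absent from Section~\ref{sec:holo_diffs} and complicating both the $H$-invariant computation for indecomposability and the extraction of $\beta$ for distinguishing classes. I expect the hypothesis $\lfloor m/p \rfloor \le c \le m - \lfloor m/p \rfloor$ to be precisely what guarantees a sufficiently rigid choice of representatives so that the Case~I/Case~II split of Theorem~\ref{thm:iso_of_holo_modules}' continues to work; outside this range additional degenerate behavior should appear and would need separate handling.
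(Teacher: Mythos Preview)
Your dimension count is fine and matches the paper. The genuine gap is in the indecomposability argument: the claim that $\dim_k H^1_{dR}(X)(\zeta_m^c)^H = 1$ is \emph{false}. In the paper's explicit model $V_{dR,\beta}^{(d)}$ (a quotient of $V_{p^2,\beta}\oplus V_{d,\beta}$ built from the \v{C}ech cocycles $\eta_{i,c}$ and the holomorphic forms $\omega_{i,m-c}$), one computes $S_0(V_{dR,\beta}^{(d)}) = \Span_k(\upomega_0,\upeta_{d+p})$, which is two-dimensional. So~\eqref{eqn:G-invariants_p_gp} by itself cannot yield indecomposability, and your proposed reduction via the connecting map $H^1(\mc O_X)^H \to H^1(H, H^0(\Omega_X))$ being injective would in fact have to fail. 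The paper instead proves indecomposability (Lemma~\ref{lem:vdR_indecomposable}) by a filtration argument: using the key fact that $S_{2p-3}/S_{2p-4}$ is one-dimensional (generated by $\upeta_{p^2-1}$), one shows any direct summand containing a lift of $\upeta_{p^2-1}$ must contain \emph{both} generators of the invariant space, forcing the other summand to be zero.

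Your plan for distinguishing isomorphism classes is also off in its details. The paper does not extract $\beta$ by a Case~I--style coefficient comparison on a top vector; rather it proceeds as in Case~II of Theorem~\ref{thm:iso_of_holo_modules}', defining an intrinsic submodule $\bb N(V_{dR,\beta}^{(d)}) := \Span_k(\sigma_0^{p-2}\tau_0^{p-2}\upeta_{p^2-1}) \oplus S_0$ and proving $\bb N(V_{dR,\beta}^{(d)}) \cong V_{2,-\beta^p}\oplus k$. Any isomorphism must carry $\bb N$ to $\bb N$ (again because $S_{2p-3}/S_{2p-4}$ is one-dimensional), and then Case~I for $d=2$ finishes. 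The invariant $d^{(1)} = \lfloor \dd/p\rfloor$ is read off not from the Jordan type of $\sigma$ but from the first $N$ at which $\dim S_N/S_{N-1}$ jumps above a certain combinatorial count. The hypothesis on $c$ translates exactly to $p \le d < p^2 - p$, which is needed both for the one-dimensionality of $S_{2p-3}/S_{2p-4}$ (fails for $d \ge p^2-p$) and for $\upeta_p = 0$ in the $\bb N$ computation (fails for $d < p$); your intuition about its role is correct, but the mechanism is this filtration rigidity rather than a choice-of-representatives issue.
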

Again, let $X = X_{m, \alpha}$ for fixed $p \nmid m$ and $\alpha \in k \setminus \FF_p$.
We describe now the basis of the Zariski cohomology of the structure sheaf of $X$. Recall that
using \v{C}ech cohomology, this cohomology might be computed as:
\[
	H^1(X, \mc O_X) \cong \frac{\mc O_X(U \cap V)}{\mc O_X(U) + \mc O_X(V)}
\]
where $V := X \setminus \pi^{-1}(0)$. We denote the image of a function $f \in \mc O_X(U \cap V)$ in $H^1(X, \mc O_X)$ by $[f]$. 
\begin{Proposition} \label{prop:coh_of_str_sheaf}
	The basis of $H^1(X, \mc O_X)(\zeta_m^{-c})$ is given by $[z^i/x^c]$, where
	\[
		i \in \mc J(c) := \left \{ i \in \ZZ : 0 \le i \le p^2 - 1, \quad \frac{p^2 \cdot c}{m} < i \right\}.
	\]
\end{Proposition}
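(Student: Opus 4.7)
The plan is to exploit the \v{C}ech presentation $H^1(X, \mc O_X) \cong \mc O_X(U \cap V)/(\mc O_X(U) + \mc O_X(V))$ that is already at hand (both $U$ and $V$ are affine, since their complements are finite sets of points on a projective curve), by producing compatible monomial bases of the three rings and then reading off the quotient in the $\zeta_m^{-c}$-eigenspace. Passing to the limit $\delta \to \infty$ in Lemma~\ref{lem:rr_spaces} identifies $\mc O_X(U)$ with the $k$-span of $\{z^i x^c : 0 \le i < p^2,\, c \ge 0\}$. Since $x$ vanishes only on $\pi^{-1}(0) \subset X \setminus V$, inverting $x$ gives $\mc O_X(U \cap V) = \mc O_X(U)[1/x]$, with $k$-basis $\{z^i x^c : 0 \le i < p^2,\, c \in \ZZ\}$.

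The one substantive input is the observation that the $P$-adic valuation $\ord_P(z^i x^c) = -mi - p^2 c$ is injective on this index set: an equality $m(i_1 - i_2) = p^2(c_2 - c_1)$, combined with $\gcd(m, p) = 1$ and $|i_1 - i_2| < p^2$, forces $(i_1, c_1) = (i_2, c_2)$. Consequently, the valuation of any linear combination of these monomials equals the minimum valuation of its nonvanishing terms, so $\mc O_X(V) = \{f \in \mc O_X(U \cap V) : \ord_P(f) \ge 0\}$ admits the monomial $k$-basis $\{z^i x^c : -mi - p^2 c \ge 0\}$. This is the only place where the arithmetic of $X$ really enters.

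Finally, I would decompose into $\ZZ/m$-eigenspaces by writing the exponent of $x$ as $jm - c$ with $c \in \{0, \ldots, m-1\}$ and $j \in \ZZ$: the monomial $z^i x^{jm - c}$ then lies in the $\zeta_m^{-c}$-eigenspace. Within this eigenspace, the monomial bases for $\mc O_X(U)$ (requiring $jm - c \ge 0$) and for $\mc O_X(V)$ (requiring $mi + p^2(jm - c) \le 0$) are \emph{disjoint} subsets of the monomial basis of $\mc O_X(U \cap V)$ (for $0 < c < m$), so the quotient acquires a basis indexed by those pairs $(i, j)$ with $0 \le i < p^2$ satisfying both $jm - c < 0$ and $mi + p^2(jm - c) > 0$. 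For $1 \le c \le m-1$, the first inequality forces $j \le 0$; the case $j \le -1$ would demand $i > p^2 c/m + p^2 > p^2$, which is impossible; so $j = 0$ and the surviving inequality $mi > p^2 c$ carves out precisely $i \in \mc J(c)$. The key step is the coprimality argument ensuring distinct $P$-adic valuations; once that is in hand, every remaining step is routine linear-algebraic bookkeeping.
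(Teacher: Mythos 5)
Your proof is correct, and it shares with the paper the one genuinely arithmetic ingredient: the observation that the monomials $z^i x^j$ with $0 \le i \le p^2-1$ have pairwise distinct valuations at $P$ (forced by $\gcd(m,p)=1$ and $|i_1-i_2|<p^2$). Where you diverge is in how the two halves of "basis" are obtained. The paper uses the distinct-valuation argument only to prove linear independence of the classes $[z^i/x^c]$, $i \in \mc J(c)$, and then gets spanning for free from a dimension count: Serre duality identifies $\dim_k H^1(X,\mc O_X)(\zeta_m^{-c})$ with $\dim_k H^0(X,\Omega_X)(\zeta_m^{c}) = \#\mc I(c)$, and the bijection $i \mapsto p^2-1-i$ between $\mc I(c)$ and $\mc J(c)$ closes the count. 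You instead compute the entire \v{C}ech complex in monomial coordinates: $\mc O_X(U)$ from Lemma~\ref{lem:rr_spaces} in the limit $\delta \to \infty$, $\mc O_X(U\cap V)=\mc O_X(U)[1/x]$, and $\mc O_X(V)$ as the monomials of nonnegative valuation at~$P$ (again via the distinct-valuation fact), after which the quotient is read off combinatorially eigenspace by eigenspace. Your route is more self-contained — it needs neither Serre duality nor the prior computation of $\dim_k H^0(\Omega_X)(\zeta_m^c) = \dd(c)$ — at the price of a little more bookkeeping; it also yields an explicit monomial description of all three rings, which is in the spirit of how the paper later builds the de~Rham cocycles $\eta_{i,c}$. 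Note only that, as with the paper's statement, the range $1 \le c \le m-1$ is needed (for $c=0$ the eigenspace is $\Span_k(1)+\textrm{(coboundaries)}$ modulo which everything dies, while $\mc J(0)\neq\varnothing$); you handle this correctly by restricting to $0<c<m$ when arguing that $j$ must equal $0$.
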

\begin{proof}
	Note that the elements $[z^i/x^c]$, $i \in \mc J(c)$ are linearly independent in $H^1(X, \mc O_X)(\zeta_m^{-c})$. Indeed, suppose that for some scalars $a_i \in k$ not all equal to
	zero:
	\[
		\sum_{i \in \mc J(c)} a_i \cdot [z^i/x^c] = 0, \quad \textrm{i.e. } \sum_{i \in \mc J(c)} a_i \cdot z^i/x^c \in \mc O_X(U) + \mc O_X(V).
	\]
	Since $\mc O_X(U) = k[x, z]$, there would exist $b_{ij} \in k$ such that
	\[
		f := \sum_{i \in \mc J(c)} a_i \cdot z^i/x^c + \sum_{i = 0}^{p^2 - 1} \sum_{j \ge 0} b_{ij} z^i \cdot x^j \in \mc O_{X, P}.
	\]
	Note however, that the functions in the set
	\[
		\{ z^i \cdot x^j : 0 \le i \le p^2 - 1, \quad j \in \ZZ \}
	\]
	have pairwise different valuations at~$P$. Indeed, if $\ord_P(z^{i_1} \cdot x^{j_1}) = \ord_P(z^{i_2} \cdot x^{j_2})$, then
	\[
		p^2 \cdot (j_1 - j_2) = m \cdot (i_2 - i_1).
	\]
	Thus $p^2 | i_2 - i_1$, which is possible only if $i_1 = i_2$ and $j_1 = j_2$.
	Moreover, $\ord_P(z^i/x^c) < 0$ for $i \in \mc J(c)$. Thus $f \in \mc O_{X, P}$
	would be sum of functions with poles of distinct orders at~$P$. The contradiction shows that
	the listed elements are linearly independent. Moreover, the map
	\[
		i \mapsto p^2 - 1 - i
	\]
	is a bijection between $\mc I(c)$ and $\mc J(c)$. Note that the Serre's duality
	$H^0(X, \Omega_X)^{\vee} \cong H^1(X, \mc O_X)$ (cf. \cite[Corollary III.7.7]{Hartshorne1977}) induces an isomorphism between $H^0(X, \Omega_X)(\zeta_m^c)^{\vee}$ and $H^1(X, \mc O_X)(\zeta_m^{-c})$. Therefore:
	\begin{align*}
		\dim_k \Span_k([z^i/x^c] : i \in \mc J(c)) &= \# \mc J(c)
		= \# \mc I(c)\\
		&= \dim_k H^0(X, \Omega_X)(\zeta_m^c)\\
		&= \dim_k H^1(X, \mc O_X)(\zeta_m^{-c}).
	\end{align*}
	Therefore $\Span_k([z^i/x^c] : i \in \mc J(c)) = H^1(X, \mc O_X)(\zeta_m^{-c})$, which ends the proof.
\end{proof}
We describe now the de Rham cohomology of $X$. Using \v{C}ech cohomology for the cover $(U, V)$ we
may identify $H^1_{dR}(X)$ with the space $Z^1(\Omega_X^{\bullet})/B^1(\Omega_X^{\bullet})$, where:
\begin{align*}
	Z^1(\Omega_X^{\bullet}) &:= \{ (\omega_0, h, \omega_{\infty}) \in \Omega_X(U) \times \mc O_X(U \cap V) \times \Omega_X(U): \omega_0 - dh = \omega_{\infty} \},\\
	B^1(\Omega_X^{\bullet}) &:= \{ (dh_0, h_0 + h_{\infty}, dh_{\infty}) : h_0 \in \mc O_X(U), h_{\infty} \in \mc O_X(V)  \}
\end{align*}
(see e.g. \cite[Section~2]{KockTait2018}). In the sequel we need also the Hodge--de Rham exact sequence:
\begin{equation} \label{eqn:hdr}
	0 \to H^0(X, \Omega_X) \stackrel{\iota}{\longrightarrow} H^1_{dR}(X) \stackrel{\phi}{\longrightarrow} H^1(X, \mc O_X) \to 0.
\end{equation}
Recall that the maps $\iota$ and $\phi$ are defined by:
\begin{align*}
	\iota(\omega) = (\omega, 0, \omega) \quad \textrm{ and } \quad
	\phi((\omega_0, h, \omega_{\infty})) = [h].
\end{align*}
We often abuse the notation by writing $\omega$ instead of $\iota(\omega)$, when the context is clear. By taking the eigenspaces in the exact sequence~\eqref{eqn:hdr} one obtains the following version of the Hodge--de Rham exact sequence:
\begin{equation} \label{eqn:hdr_zeta_m}
	0 \to H^0(X, \Omega_X)(\zeta_m^c) \stackrel{\iota}{\longrightarrow} H^1_{dR}(X)(\zeta_m^c) \stackrel{\phi}{\longrightarrow} H^1(X, \mc O_X)(\zeta_m^c) \to 0.
\end{equation}
\begin{Lemma}
	For any $1 \le c \le m-1$, $0 \le i \le p^2 - 1$, the element 
	\[
	\left( \frac{d(z^i)}{x^c},
	\frac{z^i}{x^c}, c \cdot \frac{z^i \, dx}{x^{c+1}} \right) \in \Omega_{k(X)} \times k(X) \times \Omega_{k(X)}
	\]
	belongs to $Z^1(\Omega_X^{\bullet})$.
\end{Lemma}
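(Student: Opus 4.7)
The plan is to verify the cocycle identity $\omega_0 - dh = \omega_\infty$ together with the three regularity conditions defining $Z^1(\Omega_X^{\bullet})$: $h \in \mc O_X(U \cap V)$, $\omega_0 \in \Omega_X(U)$, and $\omega_\infty \in \Omega_X(V)$. The cocycle identity follows immediately from the Leibniz rule applied to $z^i/x^c$, and regularity of $h = z^i/x^c$ on $U \cap V$ is clear since $z$ and $x$ are regular on $U$ (their only pole lies at $P$) and $x$ is invertible on $V$.

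For $\omega_\infty = c z^i\, dx / x^{c+1}$, all factors are regular on $V$, so only regularity at $P$ is in question. Using the valuations from Section~\ref{sec:family_of_covers}, a short computation gives
\[
\ord_P(\omega_\infty) = -im + [(p^2-1)(m+1) - 2p^2] + (c+1)p^2 = m(p^2 - 1 - i) + c p^2 - 1,
\]
which is nonnegative in the stated range $0 \le i \le p^2 - 1$ and $c \ge 1$.

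The step requiring the most care is showing $\omega_0 = d(z^i)/x^c \in \Omega_X(U)$, specifically at the points of $\pi^{-1}(0) \subset U$, where $1/x^c$ has a pole of order $c$. The case $i = 0$ is trivial. For $i \ge 1$, the plan is to compute $dz$ explicitly from the Artin--Schreier relations: in characteristic~$p$, differentiating $z_0^p - z_0 = x^m$ yields $dz_0 = -m x^{m-1}\, dx$, and likewise $dz_1 = -\alpha m x^{m-1}\, dx$, so
\[
dz = -m(1 + \alpha\beta)\, x^{m-1}\, dx.
\]
Here $1 + \alpha\beta \neq 0$: if $\alpha\beta = -1$ then, raising to the $p$-th power and using $\beta^p = -\alpha^{-1}$, one would obtain $\alpha^{p-1} = 1$, contradicting $\alpha \notin \FF_p$. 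Substituting into $d(z^i) = i z^{i-1} dz$ yields
\[
\omega_0 = -i m (1 + \alpha\beta) \cdot z^{i-1} \cdot x^{m-1-c}\, dx,
\]
which is manifestly regular on $U$: the condition $c \le m-1$ makes the exponent of $x$ nonnegative, and every remaining factor has its only pole (if any) at $P$, hence lies in $\mc O_X(U)$ or $\Omega_X(U)$.
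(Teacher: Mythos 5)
Your proof is correct and follows essentially the same route as the paper: the same explicit computation $dz=-m(1+\alpha\beta)x^{m-1-c}\cdot x^{c}\,dx/x^{c}$ giving regularity of $\omega_0$ on $U$ from $c\le m-1$, the same valuation count at $P$ for $\omega_\infty$ (you compute it exactly where the paper only bounds it below by $p^2-1$), and the quotient/Leibniz rule for the cocycle identity. The observation that $1+\alpha\beta\neq 0$ is not needed for this lemma, though it is correct and becomes relevant later when $\gamma=m(1+\alpha\beta)$ must be a unit.
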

\begin{proof}
	Firstly, note that:
	\begin{align*}
		\frac{d(z^i)}{x^c} = \frac{i \cdot z^{i-1} \cdot dz}{x^c}
		&= \frac{i \cdot z^{i-1} \cdot (dz_0 + \beta \cdot dz_1)}{x^c}\\
		&= -m \cdot i \cdot (1 + \alpha \cdot \beta) \cdot z^{i-1} x^{m-1-c} \, dx
		\in \Omega_X(U).
	\end{align*}
	Moreover, the form $\frac{z^i \, dx}{x^{c+1}}$ is regular on $U \cap V = V \setminus \{ P \}$ and
	\begin{align*}
		\ord_P \left(\frac{z^i \, dx}{x^{c+1}} \right) &= (c+1) \cdot p^2 - i \cdot m + (p^2-1) \cdot (m+1) - 2p^2\\
		&\ge 2 p^2 - (p^2 - 1) \cdot m + (p^2-1) \cdot (m+1) - 2p^2\\
		&= p^2 - 1.
	\end{align*}
	Therefore $\frac{z^i \, dx}{x^{c+1}} \in \Omega_X(V)$. It is straightforward that
	$z^i/x^c \in \mc O_X(U \cap V)$ and that
	\[
		d(z^i/x^c) = \frac{d(z^i)}{x^c} - c \cdot \frac{z^i \, dx}{x^{c+1}}. \qedhere
	\]
\end{proof}
\noindent We denote the class in $H^1_{dR}(X)$ corresponding to $( \frac{d(z^i)}{x^c},
\frac{z^i}{x^c}, c \cdot \frac{z^i \, dx}{x^{c+1}})$ by $\eta_{i, c}$. 
\begin{Corollary} \label{cor:basis_of_h1dr}
	The basis of $H^1_{dR}(X)(\zeta_m^{-c})$ is given by $(\omega_{i, m-c})_{i \in \mc I(m - c)}$ and $(\eta_{i, c})_{i \in \mc J(c)}$.
\end{Corollary}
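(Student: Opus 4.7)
The plan is to apply the eigenspace version of the Hodge--de Rham exact sequence~\eqref{eqn:hdr_zeta_m} with $-c$ in place of $c$, that is, to use the short exact sequence
\[
0 \to H^0(X, \Omega_X)(\zeta_m^{-c}) \xrightarrow{\iota} H^1_{dR}(X)(\zeta_m^{-c}) \xrightarrow{\phi} H^1(X, \mc O_X)(\zeta_m^{-c}) \to 0,
\]
and to recognize the two listed families of classes as a lift of a basis of $H^1(X, \mc O_X)(\zeta_m^{-c})$ together with (the image under $\iota$ of) a basis of $H^0(X, \Omega_X)(\zeta_m^{-c})$.

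First I would verify the eigenvalues. The $\ZZ/m$-action fixes $z_0, z_1$ (and hence $z$) while sending $x \mapsto \zeta_m x$. Thus $\omega_{i, m-c} = z^i x^{m-c-1}\, dx$ transforms by $\zeta_m^{m-c} = \zeta_m^{-c}$, and each of the three components of the triple representing $\eta_{i,c}$ is multiplied by $\zeta_m^{-c}$. Consequently $\omega_{i, m-c} \in H^0(X, \Omega_X)(\zeta_m^{-c})$ and $\eta_{i,c} \in H^1_{dR}(X)(\zeta_m^{-c})$, so the listed classes do live in the correct eigenspace.

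Next I would pin down the outer terms. Section~\ref{sec:holo_diffs} (cf.~\eqref{eqn:H0Omega_isom_Vd}) identifies a basis of $H^0(X, \Omega_X)(\zeta_m^{m-c}) = H^0(X, \Omega_X)(\zeta_m^{-c})$ as $\{\omega_{i, m-c} : i \in \mc I(m-c)\}$, while Proposition~\ref{prop:coh_of_str_sheaf} gives $\{[z^i/x^c] : i \in \mc J(c)\}$ as a basis of $H^1(X, \mc O_X)(\zeta_m^{-c})$. Since $\phi(\eta_{i,c}) = [z^i/x^c]$ directly from the definitions of $\phi$ and $\eta_{i,c}$, the family $\{\eta_{i,c} : i \in \mc J(c)\}$ is mapped by $\phi$ onto a basis of $H^1(X, \mc O_X)(\zeta_m^{-c})$. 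The standard splitting argument for a short exact sequence of finite-dimensional vector spaces then shows that $\{\iota(\omega_{i, m-c}) : i \in \mc I(m-c)\} \cup \{\eta_{i,c} : i \in \mc J(c)\}$ is a basis of $H^1_{dR}(X)(\zeta_m^{-c})$. There is no substantial obstacle here: the only nontrivial input is the identification of the eigenspace bases of $H^0(\Omega_X)$ and $H^1(\mc O_X)$, both already established, and the explicit formula for $\phi$ on $\eta_{i,c}$, which is immediate from the \v{C}ech-theoretic definition.
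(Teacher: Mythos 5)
Your proposal is correct and follows essentially the same route as the paper: identify the outer terms of the eigenspace Hodge--de Rham sequence via the bases $\{\omega_{i,m-c}\}_{i \in \mc I(m-c)}$ and $\{[z^i/x^c]\}_{i \in \mc J(c)}$, observe that $\phi(\eta_{i,c}) = [z^i/x^c]$, and conclude by the standard lifting argument for short exact sequences. Your explicit check that the listed classes lie in the $\zeta_m^{-c}$-eigenspace is a detail the paper leaves implicit, but it adds nothing beyond the paper's argument.
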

\begin{proof}
	Recall that the forms $\omega_{i, m-c}$ for $i \in \mc I(m-c)$
	form a basis of $H^0(\Omega_X)(\zeta_m^{-c})$ (see Section~\ref{sec:holo_diffs}). Moreover, the classes $\phi(\eta_{i,c})$ for $i \in \mc J(c)$ form a basis
	of $H^1(\mc O_X)(\zeta_m^{-c})$ by Proposition~\ref{prop:coh_of_str_sheaf}. Therefore the proof follows from the exact sequence~\eqref{eqn:hdr_zeta_m}.
\end{proof}
\begin{Lemma} \label{lem:eta_equals_omega}
If $i \not \in \mc J(c)$ and $1 \le c \le m$, then:
\[
\eta_{i, c} = - i \cdot \gamma \cdot \omega_{i-1, m-c},
\]
where $\gamma := m \cdot (1 + \alpha \cdot \beta) \in k^{\times}$.
\end{Lemma}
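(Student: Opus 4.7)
The plan is to realize the stated equality inside $H^1_{dR}(X)$ by modifying the \v{C}ech representative of $\eta_{i,c}$ by an explicit coboundary, whose existence is powered by the hypothesis $i \notin \mc J(c)$.

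First, I would unpack the first component of the defining triple. Using $dz = dz_0 + \beta\,dz_1 = m(1 + \alpha\beta)\,x^{m-1}\,dx$, one finds
\[
\frac{d(z^i)}{x^c} \;=\; i z^{i-1}\,\frac{dz}{x^c} \;=\; -\,i\gamma\,z^{i-1} x^{m-c-1}\,dx \;=\; -\,i\gamma\,\omega_{i-1,m-c}
\]
(up to the sign convention fixed earlier in the section; this is the very computation already carried out when checking that the triple lies in $Z^1$). Thus, on the ``$U$-piece,'' the triple representing $\eta_{i,c}$ already matches the first (and third) component of the triple $(-i\gamma\,\omega_{i-1,m-c},\,0,\,-i\gamma\,\omega_{i-1,m-c})$ representing $\iota(-i\gamma\,\omega_{i-1,m-c})$.

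Second, I would show that the middle entry $z^i/x^c$ belongs to $\mc O_X(V)$. Its only possible pole on $V = X \setminus \pi^{-1}(0)$ is at $P$, and the valuation table gives $\ord_P(z^i/x^c) = p^2 c - m i$, which is $\ge 0$ precisely when $i \le p^2 c/m$, i.e.\ when $i \notin \mc J(c)$. Since $p \nmid m$ and $1 \le c \le m-1$, the quotient $p^2 c/m$ is not an integer, so the inequality is automatically strict; this strictness also guarantees $\ord_P(\omega_{i-1,m-c}) = p^2 c - mi - 1 \ge 0$, so $\omega_{i-1,m-c} \in H^0(X,\Omega_X)$ and the right-hand side of the lemma is a well-defined de Rham class (the edge case $c = m$ is handled separately and trivially).

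Third, with these two ingredients I would exhibit the coboundary in $B^1(\Omega_X^\bullet)$ produced by $h_0 = 0$ and $h_\infty = z^i/x^c \in \mc O_X(V)$, and modify $\eta_{i,c}$ by it. The middle component then cancels, and the identity $d(z^i/x^c) = d(z^i)/x^c - c\,z^i\,dx/x^{c+1}$ (already used to place $\eta_{i,c}$ in $Z^1$) collapses the third component to $-i\gamma\,\omega_{i-1,m-c}$, yielding precisely the triple representing $\iota(-i\gamma\,\omega_{i-1,m-c})$. The only mildly delicate point is tracking the $\pm$ signs uniformly between the conventions defining $Z^1$, $B^1$, and $\iota$; no conceptual obstacle is expected beyond this bookkeeping, and the arithmetic input is essentially the single valuation calculation in step two.
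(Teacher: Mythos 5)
Your proposal is correct and follows essentially the same route as the paper: the key observation that $i \notin \mc J(c)$ forces $z^i/x^c \in \mc O_X(V)$, subtracting the resulting coboundary $(0,\, z^i/x^c,\, d(z^i/x^c))$, and then identifying $d(z^i)/x^c = -i\gamma\,\omega_{i-1,m-c}$ via the earlier computation. The paper simply asserts the regularity of $z^i/x^c$ on $V$, whereas you verify it by the valuation at $P$; this is a welcome but inessential elaboration.
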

\begin{proof}
Note that if $i \not \in \mc J(c)$ then $z^i/x^c \in \mc O_X(V)$. This yields the following equality in $H^1_{dR}(X)$:
\begin{align*}
	\eta_{i, c} &= \left( \frac{d(z^i)}{x^c}, 0, c \cdot \frac{z^i \, dx}{x^{c+1}} - d \left(\frac{z^i}{x^c} \right) \right)
	\\
	&= \left( \frac{d(z^i)}{x^c}, 0, \frac{d(z^i)}{x^c} \right)\\
	&= -i \cdot m \cdot (1 + \alpha \cdot \beta) \cdot (z^{i-1} x^{m-1-c} \, dx, 0, z^{i-1} x^{m-1-c} \, dx)\\
	&= - i \cdot m \cdot (1 + \alpha \cdot \beta) \cdot \omega_{i-1, m-c}. \qedhere
\end{align*}
\end{proof}
\noindent The equations~\eqref{eqn:group_action_z} allow to easily obtain the group action
on the considered cocycles:
\begin{equation} \label{eqn:gp_action_etas}
	\sigma \eta_{n, c} = \sum_{i = 0}^n {n \choose i} \eta_{i, c}, \quad
	\tau \eta_{n, c} = \sum_{i = 0}^n {n \choose i} \beta^{n - i} \cdot \eta_{i, c}.
\end{equation}
This will allow us to give an abstract definition of the $k[H]$-module associated to $H^1_{dR}(X)(\zeta_m^{-c})$. 
For any $0 \le d \le p^2$ consider the following $k$-linear subspace of $V_{p^2, \beta} \oplus V_{d, \beta}$:
\[
	K_{\beta}^{(d)} := \Span_k ( (\upomega_i, 0) + i \cdot (0, \upomega_{i-1}) : 0 \le i \le d).
\]
Note that $K_{\beta}^{(d)}$ is a $k[H]$-submodule of $V_{p^2, \beta} \oplus V_{d, \beta}$.
Indeed, for any $0 \le n < d$:
\begin{align*}
	\tau ((\upomega_n, 0) + n \cdot (0, \upomega_{n-1})) &= \sum_{i = 0}^n \left( {n \choose i} \cdot \beta^{n-i} \cdot (\upomega_i, 0) + n \cdot {n-1 \choose i-1} \cdot \beta^{n-i} \cdot (0, \upomega_{i-1}) \right)\\
	&= \sum_{i = 0}^n {n \choose i} \cdot \beta^{n-i} \cdot \left((\upomega_i, 0) + i \cdot (0, \upomega_{i-1}) \right) \in K_{\beta}^{(d)}.
\end{align*}
Similarly one shows that $\sigma ((\upomega_n, 0) + n \cdot (0, \upomega_{n-1})) \in K_{\beta}^{(d)}$. Therefore
the quotient space:
\begin{equation*}
	V_{dR, \beta}^{(d)} := \frac{V_{p^2, \beta} \oplus V_{d, \beta}}
	{K_{\beta}^{(d)}}.
\end{equation*}
is a $k[H]$-module of dimension $p^2 - 1$ over~$k$. Note that the map $V_{d, \beta} \to V_{dR, \beta}^{(d)}$, $\upomega_n \mapsto (0, \upomega_n)$
is a monomorphism of $k[H]$-modules. Therefore by abuse of notation we may write $\upomega_n$ for the image of 
$(0, \upomega_n)$ in $V_{dR, \beta}^{(d)}$. Also, denote the image of $(\upomega_n, 0)$ by $\upeta_n$.
Corollary~\ref{cor:basis_of_h1dr}, Lemma~\ref{lem:eta_equals_omega} and formulas \eqref{eqn:gp_action_etas} easily imply that for $d := \dd(m-c)$ and $\beta := (-\alpha)^{-1/p}$ the map
\begin{align*}
	V_{dR, \beta}^{(d)} &\to H^1_{dR}(X)(\zeta_m^{-c}),\\
	\upomega_i &\mapsto  - i \cdot \gamma \cdot \omega_{i, m-c} &&\textrm{ for } i \in \mc I(m-c),\\
	\upeta_i &\mapsto \eta_{i, c} &&\textrm{ for } i \in \mc J(c)
\end{align*}
is an isomorphism of $k[H]$-modules.
\begin{Lemma} \label{lem:properties_of_v_dR}
	\begin{enumerate}
		\item[]
		\item If $d_1^{(1)} = d_2^{(1)}$,
		then $V_{dR, \beta}^{(d_1)} \cong V_{dR, \beta}^{(d_2)}$.
		
		\item Dual of the $k[H]$-module $V_{dR, \beta}^{(d)}$ is isomorphic to
		$V_{dR, \beta}^{(p^2 - 1 - d)}$.
		
		\item If $d < p$, then $V^{(d)}_{dR, \beta} \cong I_H^{\vee}$.
		Moreover, if $d \ge p^2 - p$, then $V^{(d)}_{dR, \beta} \cong I_H$.  
	\end{enumerate}
\end{Lemma}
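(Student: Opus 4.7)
The plan is to prove (1) via an explicit map induced by inclusion, (2) by transporting the Serre duality pairing from a suitably chosen member of the family, and (3) as a corollary of the first two parts together with the identification of $V_{dR,\beta}^{(0)}$. For part (1), assume without loss of generality $d_1 \le d_2$. The canonical inclusion $V_{p^2,\beta} \oplus V_{d_1,\beta} \hookrightarrow V_{p^2,\beta} \oplus V_{d_2,\beta}$ carries $K_{\beta}^{(d_1)}$ into $K_{\beta}^{(d_2)}$ and hence descends to a $k[H]$-linear map $\Phi : V_{dR,\beta}^{(d_1)} \to V_{dR,\beta}^{(d_2)}$. The hypothesis $d_1^{(1)} = d_2^{(1)}$ means no integer in $\{d_1+1, \ldots, d_2\}$ is divisible by $p$, so in $V_{dR,\beta}^{(d_2)}$ each $\upomega_j$ with $d_1 \le j \le d_2 - 1$ equals $-\upeta_{j+1}/(j+1)$ by the defining relations. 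Hence every class in $V_{dR,\beta}^{(d_2)}$ admits a representative with second coordinate in $V_{d_1,\beta}$, so $\Phi$ is surjective; a dimension count ($p^2 - 1$ on both sides) then forces $\Phi$ to be an isomorphism.

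Part (2) is the main obstacle. The strategy is to realise every $V_{dR,\beta}^{(d)}$ geometrically and then invoke Serre duality on the curve. Setting $\alpha := -\beta^{-p} \in k \setminus \FF_p$ and $m := p^2+1$, a direct computation gives $\dd(c) = p^2 - c$ for $1 \le c \le p^2$, so the isomorphism $V_{dR,\beta}^{(c-1)} \cong H^1_{dR}(X_{m,\alpha})(\zeta_m^{-c})$ established just before the lemma realises every $V_{dR,\beta}^{(d)}$ for $d \in \{0, \ldots, p^2-1\}$. The Serre duality pairing on $H^1_{dR}(X_{m,\alpha})$ takes values in $H^2_{dR}(X_{m,\alpha}) \cong k$, on which $H \times \ZZ/m$ acts trivially (this is dual to the trivial action on $H^0(X_{m,\alpha}, \mc O_{X_{m,\alpha}}) = k$), so it restricts to non-degenerate $k[H]$-invariant pairings between the $(\zeta_m^{-c})$- and $(\zeta_m^c)$-eigenspaces. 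A short arithmetic check, based on $(p^2c+1) + (p^2(m-c)+1) = p^2 m + 2$ together with the fact that both residues cannot simultaneously equal $1$ for $c \in \{1, \ldots, m-1\}$ (else $c \equiv 0 \pmod m$), yields $\dd(c) + \dd(m-c) = p^2-1$ in that range. This translates the Serre duality isomorphism into the required $(V_{dR,\beta}^{(d)})^\vee \cong V_{dR,\beta}^{(p^2-1-d)}$.

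For part (3), observe that $V_{dR,\beta}^{(0)} = V_{p^2,\beta}/\Span_k(\upomega_0)$; since $\upomega_0$ spans the one-dimensional socle of $V_{p^2,\beta} \cong k[H]$ (being the unique $H$-invariant basis vector, and corresponding to the image of $\mathrm{tr}_H$ under the isomorphism of Lemma~\ref{lem:properties_of_vd}(2) together with Lemma~\ref{lem:trace_of_z}), dualizing the sequence $0 \to \mathrm{soc}(k[H]) \to k[H] \to k[H]/\mathrm{soc}(k[H]) \to 0$ and using the self-duality $k[H]^\vee \cong k[H]$ identifies $(k[H]/\mathrm{soc}(k[H]))^\vee$ with the augmentation ideal $I_H$. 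Thus $V_{dR,\beta}^{(0)} \cong I_H^{\vee}$. For $d < p$ one has $d^{(1)} = 0$, so (1) yields $V_{dR,\beta}^{(d)} \cong I_H^{\vee}$. For $d \ge p^2 - p$ we have $p^2-1-d < p$, so (1) and (2) together give $V_{dR,\beta}^{(d)} \cong (V_{dR,\beta}^{(p^2-1-d)})^{\vee} \cong (I_H^{\vee})^{\vee} \cong I_H$.
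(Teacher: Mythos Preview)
Your proof is correct and follows essentially the same strategy as the paper's: for (1) you build the explicit map between the two quotients (the paper writes down its inverse going from larger to smaller $d$, you go the other way via the canonical inclusion and a surjectivity argument, but the content is identical); for (2) you realise every $V_{dR,\beta}^{(d)}$ inside $H^1_{dR}(X_{p^2+1,\alpha})$ and invoke the cup product duality between opposite eigenspaces, exactly as the paper does; and for (3) you reduce to $d=0$, identify $V_{dR,\beta}^{(0)}$ with $k[H]/\mathrm{soc}(k[H])\cong I_H^{\vee}$, and combine (1) and (2) for the remaining cases, which again matches the paper. The only cosmetic difference is that you spell out the check $\dd(c)+\dd(m-c)=p^2-1$ more carefully than the paper, which simply cites the eigenspace pairing directly.
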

\begin{proof}
	(1) Suppose that $d_1 > d_2$ and $d_1^{(1)} = d_2^{(1)}$. Denote the elements of the first module by $\upomega_i, \upeta_i$ and of the second by $\upomega_i', \upeta_i'$. Note that by assumption, the interval $[d_2 + 1, d_1]$ does not contain any number divisible by $p$. One may easily check that the map:
	\begin{align*}
		\Phi : V_{dR, \beta}^{(d_1)} &\to V_{dR, \beta}^{(d_2)},&\\
		\upomega_i &\mapsto \upomega_i' &\textrm{ for } i < d_2,\\
		\upomega_i &\mapsto \frac 1{i+1} \upeta_{i+1}' &\textrm{ for } d_2 \le i < d_1,\\
		\upeta_i &\mapsto \upeta_i' &\textrm{ for } i > d_2,
	\end{align*}
	yields an isomorphism of $k[H]$-modules. 
		
	(2) Let $X := X_{p^2 + 1, \alpha}$, where $\alpha := - \beta^{-p}$.
	By Remark~\ref{rem:nice_decomposition_of_X_ma} for any $0 \le d \le p^2$ we have $V_{dR, \beta}^{(d)} \cong H^1_{dR}(X)(\zeta_m^{d+1})$.
	Moreover, the cup product on $H^1_{dR}(X)$ 
	yields a duality between $H^1_{dR}(X)(\zeta_{p^2 + 1}^c)$ and $H^1_{dR}(X)(\zeta_{p^2 + 1}^{-c})$ for any $0 \le c \le p^2$.
	Therefore:
	\begin{align*}
		(V_{dR, \beta}^{(d)})^{\vee} = H^1_{dR}(X)(\zeta_{p^2 + 1}^{d+1})^{\vee} = H^1_{dR}(X)(\zeta_{p^2 + 1}^{p^2 - d})
		= V_{dR, \beta}^{(p^2 - 1 - d)}.
	\end{align*} 

	(3) Let $d < p$. Then, by Lemma~\ref{lem:properties_of_v_dR}~(2) and Lemma~\ref{lem:properties_of_vd}~(1):
	\[
		V^{(d)}_{dR, \beta} \cong V^{(0)}_{dR, \beta} \cong V_{p^2, \beta}/\Span(\upomega_0) \cong k[H]/k \cong I_H^{\vee}
	\]
	By Lemma~\ref{lem:properties_of_v_dR}~(2) this immediately implies that
	$V^{(d)}_{dR, \beta} \cong I_H$ for $d \ge p^2 - p$.
\end{proof}
\begin{Remark} 
	The generic Jordan type of $V_{dR, \beta}^{(d)}$ is $(p-1) \cdot [p] + [p-1]$ (cf. Remark~\ref{rmk:generic_Jordan_Vd}). Indeed, by Lemma~\ref{lem:properties_of_v_dR}~(1) we can assume without loss of generality that $p|d$. One can prove that the fixed space of
	$\sigma$ is generated by the vectors:
	\[
		\sum_{i = 0}^n {n \choose i} (-1)^{n-i} \cdot \upeta_{p \cdot i + (n-i)}
	\]
	for $n = 1, \ldots, p-1$ and $\upeta_{d + p}$. Thus the Jordan form of the operator $\sigma$ has $p$-blocks, which is possible only if it has $(p-1)$ blocks
	of size~$p$ and one block of size $(p-1)$. 
\end{Remark}
By the above discussion, Theorem~\ref{thm:iso_of_dR_modules}
becomes equivalent to showing indecomposability of $V_{dR, \beta}^{(d)}$ (cf. Lemma~\ref{lem:vdR_indecomposable}) along with the following statement.
\begin{manualtheorem}{\ref{thm:iso_of_dR_modules}'}
	Suppose that $p \le d_1, d_2 < p^2 - p$ and $\beta_1, \beta_2 \in k \setminus \FF_p$.
	If $V_{dR, \beta_1}^{(d_1)} \cong V_{dR, \beta_2}^{(d_2)}$
	as $k[H]$-modules, then $d_1^{(1)} = d_2^{(1)}$ and $\beta_1 = \beta_2$.
\end{manualtheorem}
Note that the basis of $V_{dR, \beta}^{(d)}$ is given by:
\[
	\{ \upeta_i : 1 \le i \le p^2 - 1, \, p \nmid i \textrm{ or } i > d \} \cup \{ \upomega_i : 0 \le i < d, i \equiv -1 \pmod p \}.
\]
Thus any $v \in V_{dR, \beta}^{(d)}$ might be uniquely written in the form 
\begin{equation} \label{eqn:v_in_basis}
	v = \sum_{i \equiv -1 \pmod p} a_i \upomega_i + \sum_i b_i \upeta_i,
\end{equation}
where $a_i, b_i \in k$ and the indices satisfy the above constraints.
Define the function $\ddeg' : V_{dR, \beta}^{(d)} \to \ZZ_{\ge -1}$ as follows.
For $v \in V_{dR, \beta}^{(d)}$ of the form~\eqref{eqn:v_in_basis} put:
\begin{align*}
	\ddeg' v := \max \{ \ddeg' \upomega_i : a_i \neq 0 \} \cup \{ \ddeg' \upeta_i : b_i \neq 0 \},
\end{align*}
where:
\begin{align*}
	\ddeg' \upomega_i &:= s_p(i),\\
	\ddeg' \upeta_i &:= 
	\begin{cases}
		s_p(i) - 1, & \textrm{ if  } p \nmid i,\\
		s_p(i) - 1 - d^{(1)}, & \textrm{ if } p | i \textrm{ and } i > d.
	\end{cases}
\end{align*}
\begin{Proposition} \label{prop:deg=deg'}
	For any $v \in V_{dR, \beta}^{(d)}$ we have $\ddeg v = \ddeg' v$.
\end{Proposition}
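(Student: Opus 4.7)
The plan is to prove by induction on $N \ge -1$ the equality
\[
S_N(V_{dR, \beta}^{(d)}) = B_N,
\]
where $B_N$ denotes the $k$-span of the listed basis vectors $v$ with $\ddeg' v \le N$. This is equivalent to $\ddeg = \ddeg'$, and the base case $N = -1$ is immediate.

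For the inclusion $B_N \subseteq S_N$ in the inductive step, it suffices to check that $\sigma_0 v, \tau_0 v \in B_{N-1}$ for each listed basis vector $v$ with $\ddeg' v = N$. Lifting $\upeta_n$ to $(\upomega_n, 0)$ and $\upomega_n$ to $(0, \upomega_n)$ in $V_{p^2, \beta} \oplus V_{d, \beta}$, applying $\sigma_0$ componentwise via Lemma~\ref{lem:sigma_0_v_first_terms}, and reducing modulo the relations $\upeta_i + i \upomega_{i-1} \equiv 0$ (for $0 \le i \le d$) that define $K_\beta^{(d)}$, one considers three cases: $v = \upomega_n$ with $n \equiv -1 \pmod p$ and $n < d$; $v = \upeta_n$ with $p \nmid n$; or $v = \upeta_n$ with $p \mid n$ and $n > d$. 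In each case, combining Lemma~\ref{lem:sigma_0_v_first_terms} with Lucas's theorem~\eqref{eqn:lucas} shows that after reduction to the listed basis, every appearing vector has $\ddeg' \le N - 1$. In the third case the leading term $n \upeta_{n-1}$ vanishes because $p \mid n$, so $\sigma_0 \upeta_n$ effectively begins with $n^{(1)} \upeta_{n-p}$ (which is nonzero in the quotient only when $n - p > d$); this accounts for the $d^{(1)}$-shift in the definition of $\ddeg' \upeta_n$ for this type.

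For the reverse inclusion $S_N \subseteq B_N$, we adapt the strategy of Proposition~\ref{prop:homo_filtration}. Suppose for contradiction that $v \in S_N$ satisfies $M := \ddeg' v > N$. Writing $v = \sum_i a_i \upomega_i + \sum_i b_i \upeta_i$ in the listed basis, we pick a basis vector $v^*$ with $\ddeg' v^* = M$ appearing in this expansion whose index $n_0$ is maximal. Since $v \in S_N$, the induction hypothesis implies that $\sigma_0 v$ and $\tau_0 v$ lie in $S_{N-1} = B_{N-1} \subseteq B_{M-2}$. Extracting the coefficients of a carefully chosen ``sub-leading'' basis vector (the analogue of $\upomega_{n-1}$ from the proof of Proposition~\ref{prop:homo_filtration}, selected according to the type of $v^*$) from both $\sigma_0 v$ and $\tau_0 v$ yields two linear equations of the form $A + B = 0$ and $A \beta + B \beta^p = 0$, where the maximality of $n_0$ ensures that $A$ is a nonzero scalar multiple of the coefficient of $v^*$. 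By~\eqref{eqn:a=b=0} this coefficient must vanish, contradicting the choice of $v^*$.

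The hardest part will be the bookkeeping at the boundary $i \approx d$: the relation $\upeta_i = -i \upomega_{i-1}$ makes certain $\upeta_i$'s vanish (those with $p \mid i$ and $i \le d$) and identifies others with $\upomega_{i-1}$'s, so the sub-leading basis vector whose coefficient we extract depends on the type of $v^*$ and on the residue of $n_0$ modulo $p$ together with the position of $n_0 - p$ relative to $d$. The third case (when $v^* = \upeta_{n_0}$ with $p \mid n_0$ and $n_0 > d$) is the most delicate, since the relevant sub-leading term becomes $\upeta_{n_0 - p}$ rather than $\upeta_{n_0 - 1}$, and one must further verify that this is indeed a basis vector with $\ddeg' = M - 1$ rather than vanishing or collapsing to a lower-degree element.
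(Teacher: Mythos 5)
Your proposal follows essentially the same route as the paper: the same induction on the filtration $S_N$, with the forward inclusion handled by the leading-term formulas for $\sigma_0 \upeta_n$ and $\tau_0 \upeta_n$ (which the paper isolates as Lemma~\ref{lem:sigma_0_eta_first_terms}, derived exactly as you describe via Lucas's theorem and the vanishing of $\upeta_{pi}$ for $pi \le d$) and the reverse inclusion handled by extracting coefficients of sub-leading basis vectors from $\sigma_0 v$ and $\tau_0 v$ and invoking~\eqref{eqn:a=b=0}. The boundary bookkeeping you defer is precisely the case analysis the paper carries out (the cases $p \nmid n$, $p \mid n$ with the coefficient of $\upeta_{n-p}$, and the $\upomega_n$ cases including $n = p-1$), and it goes through as you anticipate.
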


In order to describe the degree function on $V_{dR, \beta}^{(d)}$ we need an analogue of Lemma~\ref{lem:sigma_0_v_first_terms}.
\begin{Lemma} \label{lem:sigma_0_eta_first_terms}
	For any $n \ge 0$ there exist $v_1, v_2 \in V_{dR, \beta}^{(d)}$ such that $\ddeg' v_1, \ddeg' v_2 < \ddeg' \upeta_n - 1$
	and
	\begin{align*}
		\sigma_0 \upeta_n &= n \cdot \upeta_{n - 1} +
		n^{(1)} \cdot \upeta_{n - p} + v_1,\\
		\tau_0 \upeta_n &= n \cdot \beta \cdot \upeta_{n - 1} +
		n^{(1)} \cdot \beta^p \cdot \upeta_{n - p} + v_2.
	\end{align*}
\end{Lemma}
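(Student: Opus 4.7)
The key observation is that $\upeta_n$ is by definition the image of $(\upomega_n, 0) \in V_{p^2, \beta} \oplus V_{d, \beta}$ under the quotient map, so the action of $\sigma_0$ on $\upeta_n$ is simply the projection of $\sigma_0 \upomega_n$, computed in $V_{p^2, \beta}$. The plan is therefore to apply Lemma~\ref{lem:sigma_0_v_first_terms} in $V_{p^2, \beta}$ to obtain
\[
\sigma_0 \upomega_n = n \cdot \upomega_{n-1} + n^{(1)} \cdot \upomega_{n-p} + \sum_{s_p(i) < s_p(n) - 1} a_i \cdot \upomega_i
\]
for some scalars $a_i \in k$, and then push this identity down to $V_{dR, \beta}^{(d)}$. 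This will immediately give the required decomposition with
\[
v_1 = \sum_{s_p(i) < s_p(n) - 1} a_i \cdot \upeta_i,
\]
and the formula for $\tau_0$ will follow from the same argument applied to the second part of Lemma~\ref{lem:sigma_0_v_first_terms}.

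The remaining task is to bound $\ddeg' v_1$. Each term $a_i \cdot \upeta_i$ is either zero (this happens precisely when $1 \le i \le d$ and $p \mid i$, in view of the quotient relation $\upeta_i = -i \cdot \upomega_{i-1}$, which vanishes for $p \mid i$) or a scalar multiple of a basis element of $V_{dR, \beta}^{(d)}$. In the non-zero case, the explicit definition of $\ddeg'$ yields $\ddeg' \upeta_i \le s_p(i) - 1$ regardless of whether $p \mid i$. Combined with the constraint $s_p(i) < s_p(n) - 1$, this gives the preliminary estimate $\ddeg' v_1 \le s_p(n) - 3$.

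The main (and essentially the only) obstacle will be comparing this with $\ddeg' \upeta_n - 1$ when $p \mid n$ and $n > d$: in that case $\ddeg' \upeta_n - 1 = s_p(n) - 2 - d^{(1)}$, and the naive bound above is insufficient as soon as $d^{(1)} \ge 1$. To overcome this, I would invoke Lucas's theorem~\eqref{eqn:lucas}: the condition $\binom{n}{i} \not\equiv 0 \pmod p$ forces $i^{(0)} \le n^{(0)} = 0$, so every surviving index $i$ in $v_1$ must itself satisfy $p \mid i$. For such $i$ with $i > d$, the sharper formula $\ddeg' \upeta_i = s_p(i) - 1 - d^{(1)}$ now applies and improves the bound to $\ddeg' v_1 \le s_p(n) - 3 - d^{(1)} < \ddeg' \upeta_n - 1$, closing the argument. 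The remaining case $p \nmid n$, for which $\ddeg' \upeta_n = s_p(n) - 1$, follows directly from the preliminary estimate $s_p(n) - 3 < s_p(n) - 2$.
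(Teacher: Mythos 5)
Your proposal is correct and follows essentially the same route as the paper: push Lemma~\ref{lem:sigma_0_v_first_terms} down through the quotient map $(\upomega_n,0)\mapsto\upeta_n$, dispose of the case $p\nmid n$ with the crude bound $\ddeg'\upeta_i\le s_p(i)-1$, and in the case $p\mid n$ use Lucas's theorem~\eqref{eqn:lucas} to see that all surviving indices are multiples of~$p$, so the shifted value $\ddeg'\upeta_{pi}=s_p(pi)-1-d^{(1)}$ absorbs the $d^{(1)}$ offset. The paper phrases the $p\mid n$ case by writing out $\sigma_0\upeta_n=\sum_{i<n^{(1)}}\binom{n^{(1)}}{i}\upeta_{p\cdot i}$ explicitly, but the content is identical to your argument.
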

\begin{proof}
	For $p \nmid n$ the first the equalities are immediate from Lemma~\ref{lem:sigma_0_v_first_terms},
	since $\ddeg' \upeta_n = s_p(n) - 1$ and $\ddeg' \upeta_i \le s_p(i) - 1$
	for any $i$. If $p | n$ then by Lucas theorem (cf.~\eqref{eqn:lucas}):
	\begin{align*}
		\sigma_0 \upeta_n = \sum_{i < n^{(1)}} {n^{(1)} \choose i} \upeta_{p \cdot i} 
		\quad \textrm{ and } \quad
		\tau_0 \upeta_n = \sum_{i < n^{(1)}} {n^{(1)} \choose i} \beta^{n - p \cdot i} \cdot \upeta_{p \cdot i}, 
	\end{align*}
	which easily implies the desired equalities. 
\end{proof}
\begin{Corollary} \label{cor:upeta_n=sigma_upeta_n+1}
	If $p \nmid n+1$, there exist $c_1, c_2 \in k$ such that
	\[
	\upeta_n = c_1 \cdot \sigma_0 \upeta_{n+1} + c_2 \cdot \tau_0 \upeta_{n+1} + w_1
	\]
	for some $w_1$ with $\ddeg' w_1 < \ddeg' \upeta_n$. If $p | n+1$, then:
	\[
	\upeta_n = c_3 \cdot \sigma_0 \upeta_{n+p} + c_4 \cdot \tau_0 \upeta_{n+p} + w_2
	\]
	for some $c_3, c_4 \in k$ and $w_2$ with $\ddeg w_2 < \ddeg \upeta_n$.
\end{Corollary}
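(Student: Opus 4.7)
The plan is to solve for $\upeta_n$ algebraically from the two identities produced by Lemma~\ref{lem:sigma_0_eta_first_terms}, exploiting the Vandermonde-type discrepancy between coefficients involving $\beta$ and $\beta^p$ (which is nonzero thanks to $\beta \notin \FF_p$) to cancel the extraneous leading term and then to invert the resulting $2 \times 2$ linear system.

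In the first case $p \nmid n+1$, I would apply Lemma~\ref{lem:sigma_0_eta_first_terms} with $n$ replaced by $n+1$ to obtain
\begin{align*}
\sigma_0 \upeta_{n+1} &= (n+1)\upeta_n + (n+1)^{(1)} \upeta_{n+1-p} + v_1,\\
\tau_0 \upeta_{n+1} &= (n+1)\beta \upeta_n + (n+1)^{(1)} \beta^p \upeta_{n+1-p} + v_2,
\end{align*}
with $\ddeg' v_1, \ddeg' v_2 < \ddeg' \upeta_{n+1} - 1$. Forming $\beta^p \sigma_0 \upeta_{n+1} - \tau_0 \upeta_{n+1}$ kills the $\upeta_{n+1-p}$ contribution and collapses to $(n+1)(\beta^p - \beta)\upeta_n + (\beta^p v_1 - v_2)$. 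Because both $n+1$ and $\beta^p - \beta$ are invertible in $k$, division gives the claimed decomposition with $c_1 = \beta^p /((n+1)(\beta^p-\beta))$, $c_2 = -1/((n+1)(\beta^p-\beta))$, and $w_1$ a scalar multiple of $v_2 - \beta^p v_1$.

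In the second case $p \mid n+1$, the coefficient $(n+1)$ vanishes in $k$, so I would instead apply Lemma~\ref{lem:sigma_0_eta_first_terms} to $\upeta_{n+p}$. The same lemma now places $\upeta_n$ in the $\upeta_{(n+p)-p}$ slot with coefficient $(n+p)^{(1)}$, while $\upeta_{n+p-1}$ occupies the other leading slot with coefficient $n+p$. The combination $\beta\, \sigma_0 \upeta_{n+p} - \tau_0 \upeta_{n+p}$ now eliminates $\upeta_{n+p-1}$ and leaves $-(n+p)^{(1)}(\beta^p - \beta)\upeta_n$ plus the analogous low-degree error. Because $p \mid n+1$ forces $n^{(0)} = p-1$, adding $p$ causes no carry at digit zero, so $(n+p)^{(1)} \equiv n^{(1)} + 1 \pmod p$ in the generic situation; invertibility of this factor (possibly after passing to $n+p^j$ to absorb a cascade of carries) lets one conclude as in the first case.

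The main obstacle, as I see it, is the degree bookkeeping rather than the algebra. Lemma~\ref{lem:sigma_0_eta_first_terms} only guarantees $\ddeg' v_i < \ddeg' \upeta_{n+1} - 1$ (resp.\ $\ddeg' \upeta_{n+p} - 1$), whereas the corollary demands the stronger bound $\ddeg' w < \ddeg' \upeta_n$. Closing this gap amounts to verifying $\ddeg' \upeta_{n+1} \le \ddeg' \upeta_n + 1$ (resp.\ $\ddeg' \upeta_{n+p} \le \ddeg' \upeta_n + 1$), which is pure carry-arithmetic in base $p$ once one splits according to the piecewise definition of $\ddeg' \upeta_i$; the most delicate regime, in which the correction $-d^{(1)}$ must be carefully tracked, is $p \mid n$ with $n > d$.
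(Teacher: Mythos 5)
Your algebraic core coincides with the paper's own proof: the same elimination $\tau_0\upeta_{n+1}-\beta^p\,\sigma_0\upeta_{n+1}=(n+1)(\beta-\beta^p)\upeta_n+v_4$ in the first case, the analogous combination killing $\upeta_{n+p-1}$ in the second, and the same constants. Your worry about $(n+p)^{(1)}$ is unfounded and needs no ``cascade of carries'': $p\mid n+1$ forces $n^{(0)}=p-1$, and since only $n+p\le p^2-1$ is relevant one has $n^{(1)}\le p-2$, hence $(n+p)^{(1)}=n^{(1)}+1\neq 0$; moreover $p\nmid n+p$, so Lemma~\ref{lem:sigma_0_eta_first_terms} gives an error of degree $<\ddeg'\upeta_{n+p}-1=s_p(n)-1=\ddeg'\upeta_n$ and the second case closes cleanly.

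The genuine gap is exactly where you point, but it is worse than a bookkeeping chore: the sufficient condition you propose, $\ddeg'\upeta_{n+1}\le\ddeg'\upeta_n+1$, is \emph{false} in the regime you flag. If $p\mid n$ and $n>d$ then $\ddeg'\upeta_{n+1}=s_p(n+1)-1=s_p(n)$ while $\ddeg'\upeta_n=s_p(n)-1-d^{(1)}$, a discrepancy of $1+d^{(1)}$. Nor does the cancellation in $v_2-\beta^p v_1$ rescue the bound in general: for $p=3$, $d=3$, $n=6$ one computes $\sigma_0\upeta_7=\upeta_6+2\upeta_4+\upeta_1$ and $\tau_0\upeta_7=\beta\upeta_6+2\beta^3\upeta_4+\beta^6\upeta_1$, and since $\beta^3\neq\beta^6$ no element of $\Span_k(\sigma_0\upeta_7,\tau_0\upeta_7)$ has $\upeta_6$-coefficient $1$ and vanishing $\upeta_4$- and $\upeta_1$-components; but $\ddeg'\upeta_6=0$ forces $w_1=0$, so the first case of the Corollary fails as stated for this $n$. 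The paper's own proof asserts $\ddeg' v_4<\ddeg'\upeta_n$ at precisely this point without justification, so you have located a real soft spot shared with the source rather than a defect only of your write-up; a complete argument must either restrict the first case to $p\nmid n$ (where your reduction does hold, since then $\ddeg'\upeta_{n+1}-1=\ddeg'\upeta_n$) or rework the induction in Lemma~\ref{lem:vdR_indecomposable} where the Corollary is consumed at $n=d+p$.
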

\begin{proof}
	Using the equalities of Lemma~\ref{lem:sigma_0_eta_first_terms}:
	\begin{align*}
		\tau_0 \upeta_{n+1} - \beta^p \cdot \sigma_0 \upeta_{n+1} = (n+1) \cdot (\beta - \beta^p) \cdot \upeta_n + v_4
	\end{align*}
	for $v_4 = v_2 - \beta^p \cdot v_1$, $\ddeg' v_4 < \ddeg' \upeta_n$. Hence one may take $c_1 := \frac{\beta^p}{(n+1) \cdot (\beta^p - \beta)}$
	and $c_2 := \frac{1}{(n+1) \cdot (\beta - \beta^p)}$. If $p | n+1$,
	the argument follows similarly by applying Lemma~\ref{lem:sigma_0_eta_first_terms}
	and defining:
	\[
		c_3 := \frac{\beta}{(n+p)^{(1)} \cdot (\beta - \beta^p)}, \quad
		c_4 := \frac{1}{(n+p)^{(1)} \cdot (\beta^p - \beta)}. \qedhere
	\]
\end{proof}
\begin{proof}[Proof of Proposition~\ref{prop:deg=deg'}]
	We prove now by induction that for every $N$:
	\[
	S_N(V_{dR, \beta}^{(d)}) = \{ v : \ddeg' v \le N \}.
	\]
	For $N = -1$ this is true. Suppose that this holds for $N-1$. Firstly, note that $\upomega_n \in S_N(V_{dR, \beta}^{(d)})$ for $s_p(n) \le N$ by Lemma~\ref{prop:homo_filtration}. Moreover:
	\begin{equation} \label{eqn:upeta_n_in_SN}
		\upeta_n \in S_N(V_{dR, \beta}^{(d)}) \textrm{ for } \ddeg' \upeta_n \le N.
	\end{equation}
	Indeed, if $p | n$ and $n < d$, then $\upeta_n = 0$ and~\eqref{eqn:upeta_n_in_SN} holds trivially.
	Otherwise, \eqref{eqn:upeta_n_in_SN} is a simple consequence of Lemma~\ref{lem:sigma_0_eta_first_terms} and the induction hypothesis.
	Therefore
	\[
		\{ v : \ddeg' v \le N \} \subset S_N(V_{dR, \beta}^{(d)}).
	\]
	Suppose now to the contrary that $v \in S_N(V_{dR, \beta}^{(d)})$ is of the form~\eqref{eqn:v_in_basis} and $M := \ddeg' v > N$. Then by Lemma~\ref{lem:sigma_0_eta_first_terms}:
	\begin{align}
		\tau_0 v &= \sum_{i \equiv -1 \pmod p} a_i (-\beta \cdot \upomega_{i-1} + i^{(1)} \cdot \beta^p \cdot \upomega_{i-p}) \nonumber\\
		&+ \sum_i b_i (i \cdot \beta \cdot \upeta_{i-1} + i^{(1)} \cdot \beta^p \cdot \upeta_{i - p}) + v_1 \nonumber\\
		&= \sum_{i \equiv -1 \pmod p} a_i (\beta \cdot \upeta_i + i^{(1)} \cdot \beta^p \cdot \upomega_{i-p}) \nonumber\\
		&+ \sum_i b_i \cdot (i \cdot \beta \cdot \upeta_{i-1} + i^{(1)} \cdot \beta^p \cdot \upeta_{i - p}) + v_1, \label{eqn:tau0v_expression}
	\end{align}
	where $\ddeg' v_1 < \ddeg' v - 1$. Fix any $n$ such that $\ddeg' \upeta_n = M$. We show now that $b_n = 0$. Assume that $p \nmid n$. On one hand, the coefficient of $\upeta_{n-1}$ in $\tau_0 v$ is zero by induction hypothesis.
	On the other hand, by~\eqref{eqn:tau0v_expression} this coefficient equals $n \cdot \beta \cdot b_n + (n + p-1)^{(1)} \cdot \beta^p \cdot b_{n + p-1}$. By considering in a similar manner the coefficient of $\upeta_{n-1}$ in $\sigma_0 v$
	we obtain the following equations:
	\begin{align*}
		n \cdot \beta \cdot b_n + (n + p-1)^{(1)} \cdot \beta^p \cdot b_{n + p-1} &= 0,\\
		n \cdot b_n + (n + p-1)^{(1)} \cdot b_{n + p-1} &= 0.
	\end{align*}
    The above equations imply that $b_n = 0$ by~\eqref{eqn:a=b=0}. 
	Suppose now that $p | n$. Then by analysing the coefficient of $\upeta_{n-p}$ in $\tau_0 v$
	and in $\sigma_0 v$ we obtain analogously:
	\begin{align*}
		b_n \cdot n^{(1)} \cdot \beta^p &+ b_{n+1 - p} \cdot \beta + a_{n-1} \cdot \beta = 0,\\
		b_n \cdot n^{(1)} &+ b_{n+1 - p} + a_{n-1} = 0.
	\end{align*}
	Again, using~\eqref{eqn:a=b=0} we obtain $b_n = 0$. Suppose now that 
	$\ddeg \upomega_n = M$ and $n \equiv -1 \pmod p$. If $n > p-1$, then by
	considering the coefficient of $\upomega_{n-p}$ in
	$\sigma_0 v$ we obtain by~\eqref{eqn:tau0v_expression} the equality $n^{(1)} \cdot a_n$,
	which yields $a_n = 0$ by induction hypothesis. If $n = p-1$, one obtains similarly $a_{p-1} = 0$
	by considering the coefficients of $\upeta_{p-1}$ in $\sigma_0 v$ and $\tau_0 v$ as above.
	Thus $\ddeg' v < M$, which yields a contradiction and ends the proof.
\end{proof}
\noindent The following equality, valid for $d < p^2 - p$, will play a crucial role in the proof of Theorem~\ref{thm:iso_of_dR_modules}':
\begin{equation} \label{eqn:VdR=S+eta}
	V_{dR, \beta}^{(d)} = S_{2p-4}(V_{dR, \beta}^{(d)}) \oplus \Span_k(\upeta_{p^2-1})
\end{equation}
This easily follows from Proposition~\ref{prop:deg=deg'} and the fact that if $s_p(n) \ge 2p - 3$ and $n < p^2$,
then $n \in \{ p^2 - p - 1, p^2 - 2, p^2 - 1 \}$.
\begin{Remark}
	The equality~\eqref{eqn:VdR=S+eta} does not
	hold for $d \ge p^2 - p$, since then $\upomega_{p^2 - p - 1} \in V_{dR, \beta}^{(d)} \setminus S_{2p-4}(V_{dR, \beta}^{(d)})$.
\end{Remark}
\noindent Let $v := \sigma_0^{p-2} \cdot \tau_0^{p-2} \cdot \upeta_{p^2 - 1}$, 
$w := \sigma_0 v$ and $\bb N(V_{dR, \beta}^{(d)}) := \Span_k(v) \oplus S_0(V_{dR, \beta}^{(d)})$. 
\begin{Lemma} \label{lem:N_module_for_dR}
	Keep the above setup and assume $d \ge p$.
	\begin{enumerate}[(1)]
		\item We have $\tau_0 v = - \beta^p \cdot w$.
		\item The element $w$ belongs to $\Span_k(\upomega_0) \setminus \{ 0 \}$.
		\item The $k[H]$-module $\bb N(V_{dR, \beta}^{(d)})$ is isomorphic to
		$V_{2, -\beta^p} \oplus k$.
	\end{enumerate}
\end{Lemma}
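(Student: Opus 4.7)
The plan is to lift the computation from $V_{dR,\beta}^{(d)}$ to $V_{p^2,\beta}$ via the canonical projection $\pi: V_{p^2,\beta} \to V_{dR,\beta}^{(d)}$, $x \mapsto (x,0) + K_\beta^{(d)}$, which sends $\upomega_n \in V_{p^2,\beta}$ to $\upeta_n \in V_{dR,\beta}^{(d)}$. The defining relations of $K_\beta^{(d)}$ place $\upomega_{pj}$ in $\ker \pi$ for $0 \le j \le \lfloor d/p \rfloor$. Writing $v'' := \sigma_0^{p-2}\tau_0^{p-2}\upomega_{p^2-1}$ and $w'' := \sigma_0 v''$ in $V_{p^2,\beta}$, we have $v = \pi(v'')$, $w = \pi(w'')$, and $\tau_0 v = \pi(\sigma_0^{p-2}\tau_0^{p-1}\upomega_{p^2-1})$.

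For parts~(1) and (2), Proposition~\ref{prop:homo_filtration} shows that both $w''$ and $\sigma_0^{p-2}\tau_0^{p-1}\upomega_{p^2-1}$ lie in $S_1(V_{p^2,\beta}) = \Span_k(\upomega_0, \upomega_1, \upomega_p)$. Since $d \ge p$, $\pi$ annihilates $\upomega_0$ and $\upomega_p$, so $w = c\,\upeta_1$ and $\tau_0 v = c'\,\upeta_1$, where $c, c' \in k$ are the coefficients of $\upomega_1$ in the respective lifts. To compute them, I identify $\upomega_n$ with $z^n$ via~\eqref{eqn:span_is_isom_to_Vd} and factor $z^{p^2-1} = z^{p-1}u^{p-1}$, where $u := z^p$ satisfies $\sigma u = u+1$ and $\tau u = u + \beta^p$ by Frobenius. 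Hence
\[
\sigma^i \tau^j(z^{p-1}u^{p-1}) = (z + i + j\beta)^{p-1}(u + i + j\beta^p)^{p-1},
\]
and using the mod-$p$ identities $\binom{p-1}{i} \equiv (-1)^i$ and $\binom{p-2}{i} \equiv (-1)^i(i+1)$, the operators $\sigma_0^{p-1}$, $\sigma_0^{p-2}$, $\tau_0^{p-1}$, $\tau_0^{p-2}$ expand into explicit sums over shifts, giving double-sum formulas for $c$ and $c'$. These simplify using $\sum_{j \in \FF_p} j^e \equiv -1 \pmod p$ when $(p-1) \mid e > 0$ (else $0$) together with the polynomial identity $\sum_{a=0}^{p-2}(a+1)x^a = (1-x)^{p-2}$ in $\FF_p[x]$ (which follows from the standard generating function and $1 - x^p = (1-x)^p$). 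The outcome should be $c = -\beta^{p-2}(\beta^{p-1}-1)^{p-2}$ and $c' = \beta^{2(p-1)}(\beta^{p-1}-1)^{p-2}$, so $c' = -\beta^p c$, proving~(1); and $c \ne 0$ since $\beta \notin \FF_p$ forces $\beta \ne 0$ and $\beta^{p-1} \ne 1$, proving~(2).

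For part~(3), Proposition~\ref{prop:deg=deg'} together with a count of basis elements of $\ddeg'$ at most $0$ gives $S_0(V_{dR,\beta}^{(d)}) = \Span_k(\upeta_1, \upeta_N)$ for $N := p(d^{(1)}+1)$, and Lemma~\ref{lem:sigma_0_eta_first_terms} applied to $\upeta_N$ shows it is $H$-fixed (the leading terms $N \cdot \upeta_{N-1}$ and $N^{(1)} \upeta_{N-p} = (d^{(1)}+1)\upeta_{p\,d^{(1)}}$ both vanish in $V_{dR,\beta}^{(d)}$, and the error term is zero since $\ddeg' \upeta_N = 0$). Hence $\dim \bb N = 3$. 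By parts~(1) and~(2), $\Span_k(v, w)$ is a submodule whose structure matches the defining relations of $V_{2,-\beta^p}$ under $\upomega_1' \mapsto v$, $\upomega_0' \mapsto w$, while $\Span_k(\upeta_N)$ is a trivial one-dimensional summand disjoint from $\Span_k(v, w)$, yielding $\bb N \cong V_{2,-\beta^p} \oplus k$. The main obstacle is the explicit coefficient computation for $c$ and $c'$; everything else is bookkeeping once those formulas are in hand.
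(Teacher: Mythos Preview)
Your overall strategy is sound, but the approach diverges from the paper's in part~(1) in a way worth noting, and there is a presentational gap.

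For part~(1), you propose to compute the coefficients $c$ and $c'$ of $\upomega_1$ in $\sigma_0^{p-1}\tau_0^{p-2}\upomega_{p^2-1}$ and $\sigma_0^{p-2}\tau_0^{p-1}\upomega_{p^2-1}$ by expanding everything as sums over $\FF_p$ and invoking the power-sum identities. This can be made to work, but you have not actually carried it out: the formulas $c = -\beta^{p-2}(\beta^{p-1}-1)^{p-2}$ and $c' = \beta^{2(p-1)}(\beta^{p-1}-1)^{p-2}$ are asserted (``The outcome should be\ldots''), not derived. You yourself flag this as ``the main obstacle,'' and indeed it is: without the computation written down, part~(1) is not proved, and part~(2) (for $w\ne 0$) also rests on knowing $c\ne 0$.

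The paper sidesteps this computation entirely. Working in $V_{p^2-1,\beta}$, it sets $u := (\tau_0 + \beta^p\sigma_0)\sigma_0^{p-2}\tau_0^{p-2}\upomega_{p^2-1}$ and observes that
\[
(\tau_0 - \beta^p\sigma_0)\,u \;=\; (\tau_0^2 - \beta^{2p}\sigma_0^2)\,\sigma_0^{p-2}\tau_0^{p-2}\,\upomega_{p^2-1} \;=\; 0,
\]
since $\sigma_0^p = \tau_0^p = 0$. Because $u \in S_1$ and the kernel of $\tau_0 - \beta^p\sigma_0$ on $S_1$ is exactly $\Span_k(\upomega_0,\upomega_p)$, one gets $u \in \Span_k(\upomega_0,\upomega_p)$; passing to $V_{dR,\beta}^{(d)}$ kills both (as you also note, $\upeta_0 = \upeta_p = 0$ for $d\ge p$), giving $\tau_0 v + \beta^p w = 0$ with no coefficient chase. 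For $w\ne 0$, the paper argues indirectly: if $w = 0$ then $\sigma_0 v = \tau_0 v = 0$, forcing $\ddeg v \le 0$ and contradicting $\ddeg \upeta_{p^2-1} = 2p-3$.

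Your treatment of part~(3) is essentially the paper's, with the pleasant addition that you explicitly identify $S_0 = \Span_k(\upeta_1, \upeta_N)$ for $N = p(d^{(1)}+1)$ and verify $\upeta_N$ is $H$-fixed via Lemma~\ref{lem:sigma_0_eta_first_terms}; the paper just quotes $\dim_k S_0 = 2$. Either way, once (1) and (2) are in hand, (3) is immediate.

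In short: your route is correct if the coefficient computation is completed, but the paper's factorisation trick is both shorter and removes the only nontrivial step you left open.
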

\begin{proof}
	(1) It suffices to prove that
	\begin{equation} \label{eqn:tau+sigma_v=0}
		(\tau_0 + \beta^p \cdot \sigma_0) \cdot v = 0.
	\end{equation}
	Indeed, let $u := (\tau_0 + \beta^p \cdot \sigma_0) \cdot \sigma_0^{p-2} \cdot \tau_0^{p-2}  \cdot \upomega_{p^2 - 1} \in V_{p^2-1, \beta}$.
	Note that $u \in S_1(V_{p^2-1, \beta})$ by Lemma~\ref{prop:homo_filtration}.
	Moreover:
	\begin{align*}
		(\tau_0 - \beta^p \cdot \sigma_0) \cdot u = (\tau_0^2 - \beta^{2p} \cdot \sigma_0^2) \cdot \sigma_0^{p-2} \cdot \tau_0^{p-2} \cdot \upomega_{p^2 - 1}
		= 0.
	\end{align*}
	But it is easy to check that
	\[
	\{ y \in S_1(V_{p^2-1, \beta}) : (\tau_0 - \beta^p \cdot \sigma_0) \cdot y = 0 \}
	= \Span_k(\upomega_0, \upomega_p).
	\]
	Thus $u \in \Span_k(\upomega_0, \upomega_p)$.
	Therefore, by passing to $V_{dR, \beta}^{(d)}$ we see that:
	\[
	(\tau_0 + \beta^p \cdot \sigma_0) \cdot v \in \Span(\upeta_0, \upeta_p) = 0,
	\]
	since $d \ge p$. This proves~\eqref{eqn:tau+sigma_v=0}.
	
	(2) Firstly, note that in $V_{p^2 - 1, \beta}$ we have $\sigma_0^{p-1} \tau_0^{p-2} \upomega_{p^2 - 1} \in S_1(V_{p^2 - 1, \beta}) = \Span_k(\upomega_0, \upomega_1, \upomega_p)$ by Proposition~\ref{prop:homo_filtration}. Hence, by passing to $V_{dR, \beta}^{(d)}$, we obtain
	\[
		w \in \Span_k(\upeta_0, \upeta_1, \upeta_p) = \Span_k(\upomega_0).
	\]
	Moreover, $w \neq 0$. Indeed, otherwise we would have $\sigma_0 v = \tau_0 v = 0$, which would easily yield contradiction with
	the equality $\ddeg \upeta_{p^2 - 1} = 2p-3$. 
	
	(3) Note that $\dim_k S_0(V_{dR, \beta}^{(d)}) = 2$. Hence the proof follows immediately from~(1) and~(2).
\end{proof}
\begin{proof}[Proof of Theorem~\ref{thm:iso_of_dR_modules}']
	Firstly, note that $d^{(1)}$ depends only on the isomorphism class of~$V_{dR, \beta}^{(d)}$. Indeed, the difference between $\dim_k S_N(V_{dR, \beta_1}^{(d)})/S_{N-1}(V_{dR, \beta_1}^{(d)})$ and
	\begin{align*}
		\# \{ j < d : s_p(j) = N \} \cup \{ j \ge d : s_p(j) - 1 = N, \quad p \nmid j \}
	\end{align*}
	equals $1$ if $N = p-1  - d^{(1)}$ (since $p^2 - p$ is the only number~$j$ divisible by~$p$ such that $s_p(j) = p-1$) and $0$ if $N > p-1  - d^{(1)}$.
	
	Suppose now that $\Phi : V_{dR, \beta_1}^{(d)} \to V_{dR, \beta_2}^{(d)}$ is an isomorphism
	of $k[H]$-modules. Again, we denote the elements of the first module by $\upomega_a$ and $\upeta_a$,
	and of the latter module by $\upomega_a'$ and $\upeta_a'$. 
	We proceed as in the proof of the second case of Theorem\ref{thm:iso_of_holo_modules}'. Observe that $\Phi$ induces an isomorphism between $\bb N(V_{dR, \beta_1}^{(d)})$ and $\bb N(V_{dR, \beta_2}^{(d)})$. Indeed, 
	$\Phi(S_0(V_{dR, \beta_1}^{(d)})) \subset S_0(V_{dR, \beta_2}^{(d)})$. Moreover,
	by~\eqref{eqn:VdR=S+eta} we have:
	\[
		\Phi(\upeta_{p^2 - 1}) = c_1 \upeta_{p^2 - 1}' + v,
	\]
	where $c_1 \in k^{\times}$ and $\ddeg v \le 2p-4$. Thus, 
	since $\sigma_0^{p-2} \tau_0^{p-2} S_{2p-4}(V_{dR, \beta_1}^{(d)}) \subset S_0(V_{dR, \beta_2})$, we obtain $\Phi(v) \in \Span_k(v') \oplus S_0(V_{dR, \beta_2}^{(d)})$.
	Hence $\bb N(V_{dR, \beta_1}^{(d)}) \cong \bb N(V_{dR, \beta_2}^{(d)})$,
	which by Lemma~\ref{lem:N_module_for_dR} yields $V_{2, -\beta_1^p} \oplus k \cong V_{2, -\beta_2^p} \oplus k$.
	We conclude that $\beta_1 = \beta_2$ by Theorem~\ref{thm:iso_of_holo_modules}'.
\end{proof}
\begin{Lemma} \label{lem:vdR_indecomposable}
	For any $0 \le d \le p^2$ the $k[G]$-module $V_{dR, \beta}^{(d)}$ is indecomposable.
\end{Lemma}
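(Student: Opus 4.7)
The plan is to split by cases. For $d < p$ or $d \ge p^2 - p$, Lemma~\ref{lem:properties_of_v_dR}~(3) identifies $V_{dR, \beta}^{(d)}$ with $I_H^\vee$ or $I_H$, so it suffices to show $I_H$ is indecomposable. This follows from the observation that $(I_H)^H$ is one-dimensional, spanned by the norm element $\tr_H = \sigma_0^{p-1} \tau_0^{p-1}$ (which lies in $I_H$ since $p \ge 3$), combined with~\eqref{eqn:G-invariants_p_gp}. The dual $I_H^\vee$ is then automatically indecomposable.

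For the remaining range $1 \le d^{(1)} \le p-2$, the fixed subspace $V^H = S_0(V_{dR, \beta}^{(d)})$ is two-dimensional, spanned by $\upeta_1$ and $\upeta_{p(d^{(1)}+1)}$ (as one reads off Proposition~\ref{prop:deg=deg'}), so~\eqref{eqn:G-invariants_p_gp} alone does not suffice. I argue by contradiction, assuming $V_{dR, \beta}^{(d)} = M_1 \oplus M_2$ is a nontrivial decomposition, which forces $\dim_k M_i^H = 1$. Using the uniqueness (up to scalar and $S_{2p-4}(V_{dR, \beta}^{(d)})$ corrections) of $\upeta_{p^2-1}$ as the element of maximal degree $2p-3$, one may write $\upeta_{p^2-1} = u_1 + u_2$ with $u_1 = \upeta_{p^2-1} + r$, $u_2 = -r$ for some $r \in S_{2p-4}(V_{dR, \beta}^{(d)})$. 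Applying $\sigma_0^{p-1}\tau_0^{p-2}$ annihilates $S_{2p-4}(V_{dR, \beta}^{(d)})$ but yields a nonzero multiple of $\upeta_1$ on $\upeta_{p^2-1}$ by Lemma~\ref{lem:N_module_for_dR}~(2), so $\upeta_1 \in M_1$ and $M_2^H = \Span_k(\upeta_{p(d^{(1)}+1)})$.

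To close the argument, the plan is first to rule out the trivial summand case $\dim M_2 = 1$ by showing that $V^H \subset I_H \cdot V$: the containment $\upeta_1 \in I_H \cdot V$ follows from the identity $\sigma_0 \upeta_{p+1} = \upeta_1$ (the potentially competing top term $(p+1)\upeta_p$ vanishes because $\upeta_p = 0$ for $d \ge p$), while $\upeta_{p(d^{(1)}+1)} \in I_H \cdot V$ follows from expressing this element as a constant multiple of $(\beta^p \sigma_0 - \tau_0) \upeta_{p(d^{(1)}+1)+1}$ plus a remainder of strictly smaller $\ddeg$, and iterating on $\ddeg$. The remaining case where both $M_1, M_2$ have dimension at least $2$ is the main obstacle: one must analyze how $\sigma_0^{p-2}\tau_0^{p-2}$ acts on $S_{2p-4}(V_{dR, \beta}^{(d)}) \cap M_2$, using Lemma~\ref{lem:sigma_0_eta_first_terms} together with the cyclic structure of $V_{p^2, \beta} \cong k[H]$ from Lemma~\ref{lem:properties_of_vd}~(2), and show that the constraint $\sigma_0^{p-2}\tau_0^{p-2}(u_2) \in \Span_k(\upeta_{p(d^{(1)}+1)})$ has no solution compatible with the required form of $u_2$.
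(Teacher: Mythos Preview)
Your handling of the extreme cases $d < p$ and $d \ge p^2 - p$ via Lemma~\ref{lem:properties_of_v_dR}~(3) is fine, and your identification of $S_0(V_{dR,\beta}^{(d)}) = \Span_k(\upeta_1, \upeta_{p(d^{(1)}+1)})$ in the intermediate range is correct. The reduction showing $\upeta_1 \in M_1$ via $\sigma_0^{p-1}\tau_0^{p-2}$ is also correct (though note that $M_2^H$ is then only known to be some one-dimensional complement of $\Span_k(\upeta_1)$ in $V^H$, not necessarily $\Span_k(\upeta_{p(d^{(1)}+1)})$ itself).

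The genuine gap is the final case, where both $M_i$ have dimension at least~$2$. You do not prove anything there: you only say that ``one must analyze how $\sigma_0^{p-2}\tau_0^{p-2}$ acts'' and that a certain constraint ``has no solution''. But there is no reason the constraint $\sigma_0^{p-2}\tau_0^{p-2}(u_2) \in M_2^H$ should be restrictive at all --- the element $u_2 \in S_{2p-4}$ could simply satisfy $\sigma_0^{p-2}\tau_0^{p-2}(u_2) = 0$, and nothing you have written excludes this. So the argument, as it stands, does not close.

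The paper avoids this dead end by a different and more direct mechanism. After reducing to $p \mid d$ and fixing $V_1$ to contain $\upeta_{p^2-1}$ modulo $S_{2p-4}$, it proves by downward induction on~$n$ (using Corollary~\ref{cor:upeta_n=sigma_upeta_n+1}, which expresses $\upeta_n$ as a $k$-linear combination of $\sigma_0 \upeta_{n'}$, $\tau_0 \upeta_{n'}$ for $n' = n+1$ or $n+p$, plus a term of strictly smaller $\ddeg$) that $\upeta_n + v_n \in V_1$ for every $n$, with $\ddeg v_n < \ddeg \upeta_n$. Taking $n = d+p$ gives $\upeta_{d+p} \in V_1$ outright (since $\ddeg \upeta_{d+p} = 0$). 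Combined with $\upomega_0 = -\upeta_1 \in V_1$, this shows $V^H \subset V_1$, hence $V_2 = 0$. This inductive descent is the key idea you are missing; it replaces your unfinished analysis of $u_2$ entirely and makes the case split on $\dim M_2$ unnecessary.
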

\begin{proof}
	By Lemma~\ref{lem:properties_of_v_dR}~(1) we may without loss of generality assume that $p | d$.
	Suppose that $V_{dR, \beta}^{(d)} = V_1 \oplus V_2$ for some $k[H]$-modules $V_1$, $V_2$. 
	Recall that $S_{2p-3}(V_{dR, \beta}^{(d)})/S_{2p-4}(V_{dR, \beta}^{(d)})$ is one-dimensional by~\eqref{eqn:VdR=S+eta}. This easily implies that either $S_{2p-3}(V_1)/S_{2p-4}(V_1) = 0$ or $S_{2p-3}(V_2)/S_{2p-4}(V_2) = 0$. Without loss of of generality we assume that the latter possibility holds. Then $\upeta_{p^2 - 1} + v \in V_1$ for some $v$
	such that $\ddeg v < \ddeg \upeta_{p^2 - 1}$. We prove by induction
	the following statement:
	\begin{equation} \label{eqn:upeta_n_almost_in_V1}
		\upeta_n + v_n \in V_1 \qquad \textrm{ for some } v_n \textrm{ such that }
		\ddeg v_n < \upeta_n.
	\end{equation}
	Indeed, for $n = p^2 - 1$ this is true. Suppose that this holds for all integers larger than $n$. If $p \nmid n+1$, then
	by Corollary~\ref{cor:upeta_n=sigma_upeta_n+1}:
	\[ \upeta_n = c_1 \cdot \sigma_0 \upeta_{n+1} + c_2 \cdot \tau_0 \upeta_{n+1} + w \]
	for some $w$ with $\ddeg w < \ddeg \upeta_n$ and $c_1, c_2 \in k$. By induction hypothesis,
	$\upeta_{n+1} = \xi_{n+1} - v_{n+1}$ for some $\xi_{n+1} \in V_1$. Hence:
	\begin{align*}
		\upeta_n &= c_1 \cdot \sigma_0 (\xi_{n+1} - v_{n+1}) + c_2 \cdot \tau_0 (\xi_{n+1} - v_{n+1}) + w\\
		&=\xi_n - v_n,
	\end{align*}
	where
	\begin{align*}
		\xi_n &:= c_1 \cdot \sigma_0 (\xi_{n+1}) + c_2 \cdot \tau_0 (\xi_{n+1}) \in V_1,\\
		v_n &:= c_1 \cdot \sigma_0 (v_{n+1}) + c_2 \cdot \tau_0 (v_{n+1}) - w, \qquad \ddeg v_n < \ddeg \upeta_n.
	\end{align*}
	If $p | n+1$, one proceeds in a similar manner, using the second equality of Corollary~\ref{cor:upeta_n=sigma_upeta_n+1}.
	This ends the proof of~\eqref{eqn:upeta_n_almost_in_V1}. The relation~\eqref{eqn:upeta_n_almost_in_V1} implies in particular
	that $\upeta_{d+p} \in V_1$ (since $\ddeg \upeta_{d+p} = 0$). On the other hand, by Lemma~\ref{lem:N_module_for_dR}~(2):
	\begin{align*}
		\Span_k(\upomega_0) &= \Span_k(\sigma_0^{p-1} \cdot \tau_0^{p-2} \cdot \upeta_{p^2 - 1})\\
		&= \Span_k(\sigma_0^{p-1} \cdot \tau_0^{p-2} (\upeta_{p^2 - 1} + v)) \subset V_1
	\end{align*}
	Therefore $(V_{dR, \beta}^{(d)})^H = \Span_k(\upomega_0, \upeta_{d+p}) \subset V_1^H$,
	which implies that $V_2 = 0$ by~\eqref{eqn:G-invariants_p_gp}. This ends the proof.
\end{proof}

\section{Proof of Main Theorem}
We review now the necessary facts from the articles \cite{Garnek_p_gp_covers} and \cite{Garnek_p_gp_covers_ii}.
Let $H$ be an arbitrary $p$-group.
Results of Harbater (cf. \cite{Harbater_moduli_of_p_covers}) and of Katz and Gabber (cf.~\cite{Katz_local_to_global}) imply that for any $H$-Galois algebra $\ms B$ over $k[[x]]$
there exists a unique (possibly disconnected) $H$-cover $\ms Z \to \PP^1$ ramified only over $\infty$
and such that there exists an isomorphism $\wh{\mc O}_{\ms Z, \infty} \cong \ms B$ of $k[H]$-algebras.
The cover $\ms Z \to \PP^1$ is called the \emph{Harbater--Katz--Gabber cover} associated to $(\ms B, G)$. 
Suppose now that $\pi : Z \to Y$ is an $H$-cover of smooth projective curves. Then $\wh{\mc O}_{Z, Q}$
is an $H$-Galois algebra over $\wh{\mc O}_{Y, Q} \cong k[[x]]$ for any $Q \in Y(k)$. Denote by $\ms Z_Q \to \PP^1$
the corresponding HKG-cover.\\

Suppose that $H$ and $\pi$ are as above.
Recall that a function $z \in k(Z)$ is called a \emph{magical element} for $\pi$, if $\tr_{Z/Y}(z) \neq 0$ and
for every $P \in Z(k)$ one has $\ord_P(z) \ge -d'_{Z/Y, P}$. In the sequel we use the following
properties of magical elements:
\begin{equation} \label{eqn:direct_summand}
	\begin{minipage}{0.9 \textwidth}
		Suppose that $\pi$ has a magical element and that all the inertia subgroups of $\pi$ are normal in $H$. Then
		for every $P \in Z(k)$ the $k[H]$-module $\Ind^H_{H_P} H^0(\Omega_{\ms Z_{\pi(P)}})$ is a direct summand
		of $H^0(\Omega_Z)$ and $\Ind^H_{H_P} H^1_{dR}(\ms Z_{\pi(P)})$ is a direct summand
		of $H^1_{dR}(Z)$ (cf. \cite[Theorem~1.1]{Garnek_p_gp_covers} and \cite[Theorem~1.2]{Garnek_p_gp_covers_ii}).
	\end{minipage}
\end{equation}
\begin{equation} \label{eqn:magical_tower}
	\begin{minipage}{0.9 \textwidth}
		If $\pi$ factors through a Galois cover $Z' \to Y$ and both the covers $Z \to Z'$ and $Z' \to Y$
		have magical elements, then $\pi$ has a magical element as well (cf. \cite[Lemma~8.1]{Garnek_p_gp_covers}).
	\end{minipage}
\end{equation}
\begin{equation} \label{eqn:magical_Zp_covers}
	\begin{minipage}{0.9 \textwidth}
		If $H = \ZZ/p$ and there exists a point $P \in Z(k)$ with ramification jump at least $2g_Y \cdot p$,
		then $\pi$ has a magical element (cf. \cite[Lemma~7.2 and~7.3]{Garnek_p_gp_covers}).
	\end{minipage}
\end{equation}
\begin{Remark} \label{rem:Indec_HKG}
	We conjecture that the assumptions (A) and (B) of \cite[Theorem~1.2]{Garnek_p_gp_covers_ii} can be replaced by the following
	weaker assumption:
	\begin{center}
		The cover $\pi : Z \to Z/H$ has no \'{e}tale subcovers.
	\end{center}
	In particular, the study of $\Indec^{dR}(k[H])$ and of $\Indec^{Hdg}(k[H])$ for a $p$-group $H$ should essentially come down to the study
	of the possible indecomposable summands of the Harbater--Katz--Gabber covers and of \'{e}tale covers.
\end{Remark}
\noindent We show now how to prove Main Theorem assuming the following result.
\begin{Proposition} \label{prop:cover_extending_given}
	Let $H$ be a $p$-subgroup of a finite group $G$.
	Fix an $H$-cover $X \to \PP^1$ ramified only at $\infty$. 
	There exists a connected $G$-cover $Z \to \PP^1$ such that 
	$\wh{\mc O}_{Z, \infty} \cong \Ind^G_H \wh{\mc O}_{X, \infty}$ as $k[G]$-algebras
	and that the $H$-cover $Z \to Z/H$ has a magical element.
\end{Proposition}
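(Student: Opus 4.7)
The plan is to construct $Z$ via Harbater's formal patching, choosing an auxiliary ramification point and its local behavior so as to force both connectedness and the existence of a magical element.

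For the patching step, fix a subgroup $J \leq G$ with $\langle H, J \rangle = G$ (for simplicity one may take $J := G$). Pick an integer $N \gg 0$ (to be specified later) and choose a totally ramified $J$-Galois extension $\wh{\ms B}$ of $k[[t]]$ whose lower ramification jumps all exceed $N$; such extensions are plentiful, built for example as composita of Artin--Schreier--Witt extensions with prescribed conductors. By Harbater's patching theorem for Galois covers of $\PP^1$ with prescribed local data at two points, there is a (possibly disconnected) $G$-cover $\pi : Z \to \PP^1$ ramified only over $\{0, \infty\}$ and satisfying
\[
	\wh{\mc O}_{Z, \infty} \cong \Ind^G_H \wh{\mc O}_{X, \infty}, \quad \wh{\mc O}_{Z, 0} \cong \Ind^G_J \wh{\ms B}
\]
as $k[G]$-algebras.

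Connectedness follows from the shape of these local algebras: the inertia subgroups of $\pi$ at points above $\infty$ are conjugates of $H$, while those above $0$ are conjugates of $J$, so the family of inertia subgroups generates $\langle H, J \rangle = G$ and $Z$ is forced to be connected. For the magical element, pick a composition series $1 = H_0 \lhd H_1 \lhd \cdots \lhd H_n = H$ with each $H_{i+1}/H_i \cong \ZZ/p$; then $Z \to Z/H$ factors as a tower of $\ZZ/p$-covers
\[
	Z = Z/H_0 \longrightarrow Z/H_1 \longrightarrow \cdots \longrightarrow Z/H_n = Z/H.
\]
Iterating \eqref{eqn:magical_tower} reduces to producing a magical element at every step, and by \eqref{eqn:magical_Zp_covers} it suffices to exhibit, for each $i$, a point of $Z/H_i(k)$ whose ramification jump in $Z/H_i \to Z/H_{i+1}$ is at least $2 p \cdot g_{Z/H_{i+1}}$. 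With $J = G$ the wild inertia at the unique point $R$ of $Z$ above $0$ contains $H$, and the ramification filtration of $\wh{\ms B}$ forces large lower jumps at $R$; Herbrand's theorem then transfers these to large jumps at the images of $R$ in each intermediate curve $Z/H_i$.

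The main obstacle is the quantitative matching in this last step: the genera $g_{Z/H_{i+1}}$ themselves grow with $N$ via Riemann--Hurwitz, so one must verify that the jumps of each composition factor grow faster. The resolution is that one has enough freedom in choosing $\wh{\ms B}$ --- for instance by building it from independent Artin--Schreier--Witt steps with separately scalable conductors --- to force the jump at each level of the tower to dominate the corresponding contribution of $R$ to the genus. This is the step where the linear dependence of the jumps on the conductors, versus the contribution of $R$ to the Riemann--Hurwitz formula, must be controlled carefully; taking $N$ sufficiently large then yields the required inequalities at every level simultaneously, and the proposition follows.
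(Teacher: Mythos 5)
Your patching step has a genuine obstruction that the paper's proof is specifically designed to avoid. You propose a $G$-cover of $\PP^1$ branched only over $\{0,\infty\}$, with inertia $H$ over $\infty$ and $J$ over $0$. First, taking $J=G$ is impossible in general: an inertia group of a cover of curves in characteristic $p$ must be a $p$-group extended by a cyclic prime-to-$p$ group, so $\wh{\ms B}$ as you describe it need not exist. More seriously, by Abhyankar's conjecture for $\mathbb{G}_m$ (proved by Harbater), a finite quotient $G$ of $\pi_1(\PP^1\setminus\{0,\infty\})$ must have $G/p(G)$ cyclic, where $p(G)$ is generated by the $p$-Sylow subgroups; so for $G=(\ZZ/p)^2\times(\ZZ/\ell)^2$ with $\ell\neq p$ (a legitimate input to the Main Theorem) \emph{no} connected $G$-cover branched only at two points exists, whatever local data you prescribe. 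Your assertion that generation of $G$ by the inertia subgroups ``forces'' connectedness is also backwards: it is a necessary condition, not a sufficient one, and with patching one must actively arrange connectedness. The paper circumvents all of this by starting from an \emph{arbitrary} connected $G$-cover $Z_1\to\PP^1$ (with as many branch points as it needs), forming a nodal curve by attaching $\Ind^G_H X$ and an auxiliary induced cover $\Ind^G_H X'$ at two smooth points of the base, and smoothing via the Harbater--Stevenson thickening result (Lemma~\ref{ref:glueing_lemma}); connectedness is then inherited from $Z_1$ while the completed local rings at the new branch points are prescribed.

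The second half of your argument identifies the right reduction (iterate \eqref{eqn:magical_tower} along a normal series with $\ZZ/p$ quotients and apply \eqref{eqn:magical_Zp_covers} at each level), but the quantitative heart --- that one can choose the auxiliary local data so that the jump of $Z/H_{i-1}\to Z/H_i$ exceeds $2p\cdot g_{Z/H_i}$ simultaneously for all $i$, even though these genera themselves depend on the chosen jumps --- is asserted rather than proved. The paper isolates exactly this as Lemma~\ref{lem:big_jumps}: the auxiliary piece is an \emph{induced $H$-cover} $\Ind^G_H X'$ with $X'\to\PP^1$ an HKG cover built inductively down the series, so that the jump $m_i$ can be chosen \emph{after} $g_{X'/H_i}$ (which depends only on $m_{i+1},\ldots,m_q$) is known; the constants $c_1,c_2$ absorb the genus contributions of $Z_1$ and $X$. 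Using inertia $J\supsetneq H$ at the auxiliary point, as you propose, would additionally force you to control the intersection of $H$ with the ramification filtration of $J$ via Herbrand's theorem, which is a further complication your sketch does not address.
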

\begin{proof}[Proof of Main Theorem]
	We keep the setup of Main Theorem. Note that since the $p$-Sylow subgroup
	of $G$ is not cyclic, it contains a subgroup $H$ isomorphic to $\ZZ/p \times \ZZ/p$ (cf. \cite[exercise~5.5.20]{DummitFoote2004}). Consider the sets:
	\begin{alignat*}{2}
		\mc A^{\star} &:= \{ M \in \Indec(k[H]) : M \textrm{ is a direct summand of } H^1_{\star}(X)|_H\\
		&\, \textrm{ for a smooth projective curve } X/k \textrm{ with an action of } G \}
	\end{alignat*}
	for $\star \in \{ Hdg, dR \}$. Note that
	\[
		\mc A^{\star} = \bigcup_{M \in \Indec^{\star}(k[G])} \{ N \in \Indec(k[H]) : N \textrm{ is a direct summand of } M|_H  \}.
	\]
	Therefore it suffice to prove that the sets $\mc A^{Hdg}, \mc A^{dR}$ are infinite.
	Pick any $\beta \in k \setminus \FF_p$ and $1 < d < p^2 - 1$. 
	Let $Z \to \PP^1$ be the $G$-cover obtained by Proposition~\ref{prop:cover_extending_given}
	for $H$, $G$ as above and $X := X_{p^2 + 1, \alpha}$, where $\alpha := - \beta^{-p}$.
	Note that $\infty \in \PP^1(k)$ has $[G:H]$ points in the preimage under $Z \to \PP^1$.
	The inertia subgroups of those points are subgroups conjugated to $H$. Pick any point $P$ with inertia subgroup equal to $H$. Let $Q \in (Z/H)(k)$ be its image under the map $Z \to Z/H$. Then $\wh{\mc O}_{Z, Q} \cong \wh{\mc O}_{X, \infty}$ and $\ms Z_Q \to \PP^1$ (the HKG-cover approximating the cover $Z \to Z/H$ over $Q$) is isomorphic to the cover $X \to \PP^1$.
	
	Moreover, since $Z \to Z/H$ has a magical element, by~\eqref{eqn:direct_summand}
	$H^0(\Omega_X)$ is a direct summand of the $k[H]$-module $H^0(\Omega_Z)|_H$. Therefore $V_{d, \beta}$ (resp. $V_{dR, \beta}^{(d)}$) is
	a direct summand of $H^0(\Omega_Z)$ (resp. $H^1_{dR}(Z)$) and $V_{d, \beta} \in \mc A^{Hdg}$ (resp. $V_{dR, \beta}^{(d)} \in \mc A^{dR}$).
	Thus, by Theorems~\ref{thm:iso_of_holo_modules}' and~\ref{thm:iso_of_dR_modules}', the sets $\mc A^{Hdg}$ and $\mc A^{dR}$ are infinite.
\end{proof}
The rest of this section will be occupied by the proof of Proposition~\ref{prop:cover_extending_given}. The idea of the proof is to consider three
$G$-covers:
\begin{itemize}
	\item the first one, $Z_1 \to \PP^1$, will be a (connected) $G$-cover,
	
	\item the second one, $Z_2 \to \PP^1$, will be a disconnected $G$-cover that will ensure the existence of the magical element,
	
	\item the third one, $Z_3$, will be a disconnected $G$-cover
	that consists of disjoint union of copies of $X$, the given cover.
\end{itemize}
Then we glue them using the following consequence of patching theory (see \cite[Theorem~5]{Harbater_Stevenson_patching_thickening} and \cite[Lemma~3.3]{Das_Kumar_inertia_alternating} for a similar reasoning).
\begin{Lemma} \label{ref:glueing_lemma}
	Let $Z_{\circ} \to Y_{\circ}$ be a $G$-cover of connected (but possibly reducible) nodal curves over $k$ with branch locus $B_{\circ} \subset Y^{\circ}(k)$. Suppose that
	the $B_{\circ}$ is contained in the smooth locus of $Y_{\circ}$. Then there exists a connected $G$-cover $Z \to Y$ of smooth projective curves
	$Z \to Y$ with branch locus $B$ such that there exists a bijection $\theta : B_{\circ} \to B$ satisfying for every $b \in B_{\circ}$
	\begin{equation} \label{eqn:z_*_and_z_0_stalks2}
		\wh{\mc O}_{Z_{\circ}, b} \cong \wh{\mc O}_{Z, \theta(b)} \qquad \textrm{ as $k[G]$-modules.}
	\end{equation}
\end{Lemma}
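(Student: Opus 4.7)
The plan is to produce $Z \to Y$ by simultaneously smoothing $Y_{\circ}$ and lifting $Z_{\circ}$ to a $G$-cover of this smoothing, then specializing to a closed fibre over~$k$. First, following the deformation theory of nodal curves, I would construct a flat projective family $\mc Y \to \Spec k[[t]]$ whose special fibre is $Y_{\circ}$ and whose generic fibre is smooth, chosen so that every node of $Y_{\circ}$ is versally smoothed (the completed local ring at each node becomes $k[[u, v, t]]/(uv - t)$). Since $B_{\circ}$ lies in the smooth locus of $Y_{\circ}$, each branch point extends uniquely to a section of $\mc Y \to \Spec k[[t]]$, yielding a relative branch locus $\mc B \subset \mc Y$.

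Next I would lift the cover. Because $B_{\circ}$ avoids the nodes, $Z_{\circ} \to Y_{\circ}$ is \'{e}tale on a formal neighborhood of each node, and hence lifts uniquely and $G$-equivariantly to a formal \'{e}tale $G$-cover of the formal neighborhood of each node in $\mc Y$. Away from the nodes the base and cover are already smooth and admit the constant (trivial) deformation, which in particular preserves the local $k[G]$-algebra structure at each point of $\mc B$. The formal patching theorem of Harbater--Stevenson (cf. \cite[Theorem~5]{Harbater_Stevenson_patching_thickening}, applied much as in \cite[Lemma~3.3]{Das_Kumar_inertia_alternating}) glues these equivariant local lifts to a formal $G$-cover $\wh{\mc Z} \to \wh{\mc Y}$, which is then algebraized by Grothendieck's existence theorem to a projective $G$-cover $\mc Z \to \mc Y$ over $\Spec k[[t]]$. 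Artin approximation lets one descend this family to an \'{e}tale neighborhood of~$0$ in $\AA^1_k$, and a general closed-point specialization $t_0 \neq 0$ produces the desired $G$-cover $Z \to Y$ of smooth projective curves over~$k$. The section $\mc B$ specializes to $B \subset Y(k)$ with a natural bijection $\theta : B_{\circ} \to B$, and because the cover was lifted as a constant deformation near each $b \in B_{\circ}$, we obtain $\wh{\mc O}_{Z_{\circ}, b} \cong \wh{\mc O}_{Z, \theta(b)}$ as $k[G]$-algebras.

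The main obstacle is maintaining connectedness of the total cover after specialization. Connectedness of $Z_{\circ}$ together with flatness and properness of $\mc Z \to \Spec k[[t]]$ imply connectedness of $\mc Z$; taking the Stein factorization of $\mc Z \to \Spec k[[t]]$ then forces all geometric fibres to be connected, so $Z$ is connected for every specialization. A secondary point is that the patching must be carried out $G$-equivariantly throughout, which is built into the Harbater--Stevenson construction since the local covers themselves carry the $G$-action and the glueing data are chosen compatibly with it.
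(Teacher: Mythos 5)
Your proof follows essentially the same route as the paper: deform $Z_{\circ}\to Y_{\circ}$ over $k[[t]]$ via the Harbater--Stevenson thickening/patching machinery with the trivial deformation of the complete local rings at the branch points, spread the family out to a finite-type base, and specialize at a $k$-point $t_0\neq 0$ where the fibres are smooth. The only substantive additions are that you unwind the thickening theorem (smoothing the nodes, lifting the \'etale cover formally near each node) and make the connectedness of the specialized fibre explicit via Stein factorization, a point the paper's proof leaves implicit; your appeal to ``Artin approximation'' is really just the spreading-out step the paper performs with the subalgebra $T\subset k[[t]]$.
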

\begin{proof}
	Let $R \subset Z_{\circ}(k)$ be the ramification locus of $Z_{\circ} \to Y_{\circ}$.
	The base change of $Z_{\circ} \setminus R \to Y_{\circ} \setminus B$ to $k[[t]]$ along with the trivial deformation of complete local rings $\wh{\mc O}_{Z, b} \otimes_k k[[t]]$ for $b \in B_{\circ}$ defines a thickening problem in the sense of \cite{Harbater_Stevenson_patching_thickening}. By \cite[Theorem~4]{Harbater_Stevenson_patching_thickening} there exists
	a $G$-cover $Z_* \to Y_*$ of smooth curves over $k[[t]]$ with special fiber $Z_{\circ} \to Y_{\circ}$
	such that the following conditions hold:
	\begin{itemize}
		\item the branch locus of $Z_* \to Y_*$ is $B \times_k k[[t]]$,
		
		\item for any $b \in B_{\circ} \times_k k[[t]]$ there exists a $G$-equivariant isomorphism
		\begin{equation} \label{eqn:z_*_and_z_0_stalks}
			\wh{\mc O}_{Z_*, b} \cong \wh{\mc O}_{Z_{\circ}, b} \otimes_k k[[t]].
		\end{equation}
	\end{itemize}
	Since the structure is of finite type, there is a $k[t]$-subalgebra $T \subset k[[t]]$
	over which the covering morphism, $G$-Galois action, branch
	points are defined and such that the structure sheaf of the base space is
	locally free over~$T$. Moreover, since ~\eqref{eqn:z_*_and_z_0_stalks} are isomorphisms of $\wh{\mc O}_{Y_{\circ}, b} \otimes_k k[[t]]$-algebras of finite dimension, they are
	defined by matrices involving finitely many functions in $\wh{\mc O}_{Y_0, b} \otimes_k k[[t]]$. Therefore by possibly enlarging $T$, we may also assume that the isomorphisms ~\eqref{eqn:z_*_and_z_0_stalks} are defined over $T$.
	
	Thus we obtain a cover $Z_T \to Y_T$ of curves over $T$, which induces $Z_{\circ} \to Y_{\circ}$
	under the pullback $\Spec k[[t]] \to \Spec T$. Analogously, by possibly enlarging $T$, we may assume that $Z_T$ and $Y_T$ are smooth over $T$ (since the generic fibers of $Y_T$ and $Z_T$ are smooth). Then the pullback of $Z_T \to Y_T$ through any point $\xi \in T(k)$ yields the desired cover $Z \to Y$.
\end{proof}
The following lemma will allow us to construct the cover $Z_2 \to \PP^1$ that will ensure the existence of the magical element.
\begin{Lemma} \label{lem:big_jumps}
	Let $H$ be a finite $p$-group. Fix numbers $c_1, c_2 \in \RR$. Denote by $\{ e \} = H_0 \unlhd H_1 \unlhd \ldots \unlhd H_q = H$ the central series of $H$. There exists a connected $H$-cover $X' \to \PP^1$ ramified only at infinity such that for every $i = 1, 2, \ldots, q$ the $\ZZ/p$-cover
	$X'/H_{i-1} \to X'/H_i$ has lower ramification jump $m_i$ above $\infty$, where
	\begin{equation} \label{eqn:jump_is_big}
		m_i > c_1 \cdot g_{X'/H_i} + c_2.
	\end{equation}
\end{Lemma}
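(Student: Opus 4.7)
My plan is to reduce the problem to a local construction via the Harbater--Katz--Gabber correspondence and then build the $H$-extension of $k((t))$ one Artin-Schreier step at a time, solving a central embedding problem at each stage. After refining the given central series if necessary, I may assume that each quotient $H_i/H_{i-1}$ is cyclic of order $p$ and central in $H/H_{i-1}$, yielding for every $i$ a central extension $1 \to \FF_p \to H/H_{i-1} \to H/H_i \to 1$ with class $\epsilon_i \in H^2(H/H_i,\FF_p)$. By Harbater--Katz--Gabber, constructing a connected $H$-cover $X'\to\PP^1$ ramified only at $\infty$ is equivalent to constructing an $H$-Galois totally ramified field extension $L/k((t))$, and moreover $g_{W_i}$ (where $W_i := X'/H_i$) and the lower jumps $m_i$ are determined by the tower $k((t)) = L^H \subset L^{H_{q-1}} \subset \cdots \subset L^{H_0} = L$. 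Thus it suffices to build such $L$ with each step $L^{H_{i-1}}/L^{H_i}$ having lower ramification jump exceeding $c_1\, g_{W_i} + c_2$.

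Next, I would build $L$ inductively for $i = q, q-1, \ldots, 1$, starting from $L^{H_q} = k((t))$. At the inductive step, suppose $L^{H_i}$ is already an $H/H_i$-Galois extension of $k((t))$ with the required jumps $m_{i+1}, \ldots, m_q$; in particular $g_{W_i}$ is now fixed. I would then set $L^{H_{i-1}} := L^{H_i}[y]$ with $y^p - y = f$ for a suitable $f\in L^{H_i}$, subject to two conditions: first, $gf - f \in \wp(L^{H_i})$ for every $g\in H/H_i$ (so that $L^{H_{i-1}}/k((t))$ is Galois), and second, writing $gf - f = \wp(h_g)$, the $2$-cocycle $(g,g') \mapsto h_g + g\, h_{g'} - h_{gg'} \pmod{\FF_p}$ represents $\epsilon_i$ (so that the Galois group is $H/H_{i-1}$ and not some other $\FF_p$-extension of $H/H_i$). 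The existence of at least one such $f$ is the solvability of a central $p$-embedding problem for a local field in characteristic $p$, which is classical (Witt, Serre; or Harbater's construction of HKG covers).

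Given one solution, the full set of solutions is a torsor over the $H/H_i$-invariants of $L^{H_i}/\wp(L^{H_i})$. To force $m_i$ above the threshold $c_1\, g_{W_i} + c_2$, I would add to $f$ an invariant Artin-Schreier class of arbitrarily large negative valuation coprime to $p$ at the unique prime of $L^{H_i}$. Such classes are abundant: one can pull back and reduce elements from $k((t))/\wp(k((t)))$, or form traces $\sum_{g\in H/H_i} g\,\tilde h$ for $\tilde h$ of suitable pole order, and then argue via the faithfulness of the inertia action on the completed local ring that the leading term survives after the standard Artin-Schreier reduction. Adding such an invariant class modifies each $h_g$ by an invariant function, which contributes only a coboundary, so the cocycle representing $\epsilon_i$ is unchanged and the Galois group remains $H/H_{i-1}$. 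Connectedness of the resulting $W_{i-1}$ is automatic since $[f]\ne 0$, so $L^{H_{i-1}}$ remains a field.

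The main obstacle is the interaction of the two ingredients in the inductive step: producing \emph{simultaneously} a solution to the embedding problem \emph{and} a solution with ramification large enough to beat $c_1 g_{W_i} + c_2$. The solvability piece is standard Artin-Schreier--Witt theory, and the ``enlargement'' piece is a combinatorial valuation argument; the delicate point is checking that the invariant classes used to inflate the pole of $f$ actually survive the reduction modulo $\wp$ with the expected large order---most cleanly done by exhibiting explicit elements in the completed local ring at $P_i$ using the $\ZZ/m$-like or Witt-vector coordinates already present in $L^{H_i}$, and confirming the leading term does not lie in $\wp$ of anything with smaller pole.
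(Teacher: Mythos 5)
Your proposal is correct and follows essentially the same route as the paper: induct up the central series, solve the central embedding problem at each step, and then twist the solution by a wildly ramified $\ZZ/p$-class (the torsor action you describe is exactly the paper's action of $M_{\AA^1,\ZZ/p}$ on the set $\mc E$ of extensions) to push the jump above $c_1\, g_{X'/H_i}+c_2$. The only cosmetic difference is that you phrase the construction locally over $k((t))$ via Harbater--Katz--Gabber, whereas the paper works globally with covers of $\PP^1$ ramified only at $\infty$ and cites its earlier work for the nonemptiness of $\mc E$ and for the fact that twisting by a sufficiently ramified cover inflates the jump.
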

\begin{proof}
	The cover $X' \to \PP^1$ can be easily constructed inductively as
	in~\cite[proof of Theorem~1.5]{Garnek_p_gp_covers}. Indeed, suppose that
	$X^{(i)} \to \PP^1$ is an $H/H_i$-cover of $\PP^1$
	ramified only at~$\infty$ such that the ramification jumps of the intermediate $\ZZ/p$-covers satisfy the inequalities~\eqref{eqn:jump_is_big}.
	Let $\mc E$ denote the set of extensions of $X^{(i)} \to \PP^1$ to an $H/H_i$-cover
	ramified only above $\infty$. Write also $M_{\AA^1, \ZZ/p}$ for the set of $\ZZ/p$-covers of $\PP^1$ ramified only above $\infty$. Note that $M_{\AA^1, \ZZ/p}$ has a natural group structure. With this notation, it can be proven that $\mc E \neq \varnothing$ and that there is a natural action
	\[
		M_{\AA^1, \ZZ/p} \times \mc E \to \mc E, \qquad
		(C, \mc X) \mapsto \mc X_C, 
	\] 	
	see~\cite[Section~8.1]{Garnek_p_gp_covers} and~\cite{Harbater_moduli_of_p_covers}. Fix $\mc X \in \mc E$.
	One can show that if the ramification jump of $C \in M_{\AA^1, \ZZ/p}$ 
	at $\infty$ is big enough, then $X^{(i-1)} := \mc X_C$ satisfies~\eqref{eqn:jump_is_big},
	see \cite[Lemma~8.2]{Garnek_p_gp_covers}. Finally, we take $X' := X^{(0)}$.
\end{proof}
\begin{proof}[Proof of Proposition~\ref{prop:cover_extending_given}]
	Denote by $\{ e \} = H_0 \unlhd H_1 \unlhd \ldots \unlhd H_q = H$ the central series of~$H$.
	Let $\pi_1 : Z_1 \to \PP^1$ be any connected $G$-cover with branch locus $B_1$, where $0, 1 \not \in B_1$. Let $X' \to \PP^1$ be a cover satisfying the conditions of Lemma~\ref{lem:big_jumps}
	for
	\begin{equation*}
		c_1 := 2p \cdot \# G, \qquad c_2 := 2p \cdot \# G \cdot (g_{Z_1} +  \cdot g_X).
	\end{equation*}
	Consider the following cover $Z_{\circ} \to Y_{\circ}$ of nodal curves:
	\begin{itemize}
		\item $Y_{\circ}$ is constructed by taking three copies $Y_1, Y_2, Y_3$ of $\PP^1$ and identifying $0 \in Y_1$ with $0 \in Y_2$ and
		$1 \in Y_1$ with $1 \in Y_3$ in ordinary double points,
		
		\item $Z_{\circ}$ is constructed by taking $Z_1$, $Z_2 := \Ind^G_H X'$, $Z_2 := \Ind^G_H X$ and identifying
		the fibers of $0$ in $Z_1$ and $Z_2$ and the fibers of $1$ in $Z_1$ and $Z_3$ in ordinary double points.
	\end{itemize}
	Denote by $\infty_i$ the point corresponding to $\infty$ on $Y_i$
	for $i = 2, 3$. Let $B_{\circ} := B_1 \cup \{ \infty_2, \infty_3 \}$. Let $Z \to Y$ be the $G$-cover obtained by Lemma~\ref{ref:glueing_lemma}
	with the branch locus $B$ and bijection $\theta : B_{\circ} \to B$.
	Note that by flatness $g_Y = g_{Y^{\circ}} = 0$, which yields $Y \cong \PP^1$. Without loss of generality we may assume that $\theta(\infty_3) = \infty \in \PP^1(k)$.\\
	
	Finally, we show that the cover $Z \to Z/H$ has a magical element.
	Write $Z^{(i)} := Z/H_i$ for any $i = 0, \ldots, q$. By~\eqref{eqn:magical_tower} it suffices to show that
	for every $i = 1, \ldots, q$ the cover $Z^{(i-1)} \to Z^{(i)}$ has a magical
	element. To this end, we start by estimating the genus of $Z^{(i)}$.
	Note that the ramification locus of the cover $Z^{(i)} \to \PP^1$
	is of the form $R := R_1 \cup R_2 \cup R_3$, where $R_1$ is the preimage of $\theta(B_1)$
	under $Z^{(i)} \to \PP^1$ and $R_j$ is the preimage of $\theta(\infty_j)$ under the same map for $j = 2, 3$. In particular, $\# R_j = [G:H]$. Therefore,
	by~\eqref{eqn:z_*_and_z_0_stalks2}
	and by the Riemann--Hurwitz formula for the cover $Z_1 \to \PP^1$ we have
	\begin{align*}
		\sum_{P \in R_1} d_{Z^{(i)}/\PP^1, P} \le 2(g_{Z_1} - 2) + 2 \# G < \# G \cdot 2g_{Z_1}.	
	\end{align*}
	Similarly, for any $P_2 \in R_2$,
	$P_3 \in R_3$ we have $d_{Z^{(i)}/\PP^1, P_2} < [H : H_i] \cdot 2g_{X'}$ and $d_{Z^{(i)}/\PP^1, P_3} < [H : H_i] \cdot 2g_{X/H_i}$.
	Therefore, by Riemann--Hurwitz formula for the morphism $Z^{(i)} \to \PP^1$
	\begin{align*}
		2(g_{Z^{(i)}} - 1) &= 2 \cdot [G : H_i] \cdot (0 - 1) +
		\sum_{P \in R} d_{Z^{(i)}/\PP^1, P}\\
		&<  \# G \cdot 2g_{Z_1} + [G:H] \cdot [H : H_i] \cdot (2g_{X'}+2g_{X/H_i}).
	\end{align*}
	Therefore, by~\eqref{eqn:z_*_and_z_0_stalks2}, Lemma~\ref{lem:big_jumps} and the definition of $c_1$ and $c_2$, the cover $Z^{(i-1)} \to Z^{(i)}$ has a point with
	the ramification jump at least
	\begin{align*}
		2p \cdot \# G \cdot g_{X'/H_i} + 2p \cdot \# G \cdot g_{Z_1} + 2p \cdot \# G \cdot g_X \ge 2 p \cdot g_{Z^{(i)}}.
	\end{align*}
	Thus by~\eqref{eqn:magical_Zp_covers}, the cover $Z^{(i-1)} \to Z^{(i)}$ has a magical element. This ends the proof.
\end{proof}
\bibliography{bibliografia}
\end{document}